\numberwithin{equation}{section}
\newtheorem{theorem}{Theorem}[section]
\newtheorem{lemma}[theorem]{Lemma}
\newtheorem{corollary}[theorem]{Corollary}
\newtheorem{proposition}[theorem]{Proposition}
\newtheorem{remark}[theorem]{Remark}
\renewcommand{\epsilon}{\varepsilon}
\renewcommand{\rightarrow}{\to}
\begin{document}
\title[Sobolev-type inequality and minimizers]{On a Sobolev-type inequality  and its minimizers}

\author[J.F.\ de Oliveira]{Jos\'{e} Francisco de Oliveira}\thanks{First author was partially supported by  CNPq grant number 309491/2021-5}
\address[J.F.\ de Oliveira]{
\newline\indent Department of Mathematics
	\newline\indent 
	Federal University of Piau\'{i}
	\newline\indent
	64049-550 Teresina, PI, Brazil}
	\email{\href{mailto:jfoliveira@ufpi.edu.br}{jfoliveira@ufpi.edu.br}}
 \author[J.N.\ Silva ]{Jeferson Silva}
\address[J.N.\ Silva]{\newline\indent Department of Mathematics
	\newline\indent 
	Federal University of the Delta of Parna\'{i}ba
	\newline\indent
	CEP 64202-020, Parna\'{i}ba, PI, Brazil}
	\email{\href{mailto:j.n.silva@ufpi.edu.br}{j.n.silva@ufpi.edu.br}}
 
\subjclass[2010]{46E35, 35J35, 35J61, 35B33}
\keywords{Sobolev-type inequality; minimizers; critical exponents; elliptic equations}

\begin{abstract}
Critical Sobolev-type inequality for a class of weighted Sobolev spaces on the entire space is established. We also investigate the existence of extremal function for the associated variational problem. As an application, we prove the existence of a weak solution for a general class of critical semilinear elliptic equations related to the polyharmonic operator.  
\end{abstract}

\maketitle

\section{Introduction and main results}

Let $AC_{loc}(0,R)$ be the set of all locally absolutely continuous functions on interval $(0,R)$.  For each non-negative  integer $\ell$, let $AC^{\ell}_{loc}(0,R)$ be the set of all functions $u:(0,R)\to\mathbb{R}$
such that  $u^{(\ell)}\in AC_{loc}(0,R)$, where  $u^{(\ell)}=d^{\ell}u/dr^{\ell}$. For any $m\ge 1$ integer number, we define
\begin{equation}\label{testfunctions-right}
    AC^{m-1}_{\mathrm{R}}(0,R)=\big\{u\in AC^{m-1}_{loc}(0,R)\,:\, \lim_{r\to R} u^{(j)}(r) = 0,\; j=0,\cdots, m-1\big\}.
\end{equation}
For $0<R\leq \infty$, $p\ge 1$ and $\theta\in\mathbb{R}$, we denote by $L^{p}_{\theta}=L^{p}_{\theta}(0,R)$ the weighted Lebesgue space of the Lebesgue measurable functions
	$u :(0,R)\rightarrow\mathbb{R}$ endowed with the norm 
	$$
	\|u\|_{L^{p}_{\theta}}=\left(\int_{0}^{R}|u(r)|^{p}r^{\theta}\mathrm{d}r \right)^{\frac{1}{p}}<\infty.
    $$
Now, for real numbers $\alpha_{j}> -1$ for $j=0,1, \cdots, m$,  we denote $W^{m,p}_{R}=W^{m,p}_{R}(\alpha_{0}, \cdots, \alpha_{m})$ the weighted Sobolev space which  consists of all functions $u\in AC^{m-1}_{loc}(0,R)$ such that $u^{(j)} \in L^{p}_{\alpha_{j}}$, for all $j = 0, 1, \cdots, m.$ Then, $W^{m,p}_{R}$ is a Banach space endowed with the norm
\begin{equation}\label{c0-norma1}
    \|u\|_{W^{m, p}_{R}} = \left( \sum_{j=0}^{m} \| u^{(j)}\|_{L^{p}_{\alpha_{j}}}^{p} \right)^{\frac{1}{p}}.
\end{equation}
Further, we denote by $X^{m, p}_{R} = X^{m,p}_{R}(\alpha_{0}, \alpha_{1}, \cdots, \alpha_{m})$ the closure of set 
$$
W_{0,R}=W_{0,R}(\alpha_{0}, \cdots, \alpha_{m})=AC^{m-1}_{\mathrm{R}}(0,R)\cap W^{m,p}_{R}(\alpha_{0}, \cdots, \alpha_{m})
$$
under the norm \eqref{c0-norma1}. In the bounded case  $0<R<\infty$, if the weights $\alpha_0, \cdots , \alpha_m$ safisfy
	 the transition condition 
	\begin{equation}\label{trasition-C}
	    \alpha_{j-1}\ge \alpha_{j}-p,\quad j=1,\cdots, m,
	\end{equation}
we observe that the norm \eqref{c0-norma1} and 
\begin{equation}\label{norma-diric}
    \|u^{(m)}\|_{L^{p}_{\alpha_{m}}} = \left( \int^{R}_{0} |u^{(m)}|^{p} r^{\alpha_{m}}dr\right)^{\frac{1}{p}},
\end{equation}
are equivalent  on $X^{m,p}_{R}(\alpha_0,\cdots, \alpha_m)$,
see \cite{JAJ} for more details.

For each $\alpha>-1$, we define the $\alpha$-generalized Laplacian operator in the radial form 
\begin{equation}\label{laplace-geral}
    \Delta_{\alpha} u = r^{-\alpha}(r^{\alpha} u')',
\end{equation}
where $u\in C^2(0,R)$. The operator \eqref{laplace-geral} appears in \cite{JAJ} and we observe that, for $\alpha=N-1$ positive integer number, $\Delta_{\alpha} u $ agrees precisely with the Laplacian operator acting on radial symmetric functions $u$  defined on a ball centered at the origin $B(0,R)\subset\mathbb{R}^{N}, R>0$. To deal with  the $\alpha$-generalized Laplacian operator, we introduce the set
$$D_{0,R}(\alpha)=D_{0,R}(\alpha,m,p)=\big\{u\in AC^{m-1}_{\mathrm{R}}(0,R)\; :\; u^{(m)}\in L^{p}_{\alpha}\big\}.$$ 
Finally, we define  the new space $\mathcal{D}^{m,p}_{R}(\alpha)$ by completion of $D_{0,R}(\alpha,m,p)$ under the norm \eqref{norma-diric} with $\alpha=\alpha_m$. Another norm we can consider on $\mathcal{D}^{m,p}_{R}(\alpha)$ is the \textit{Navier norm} 
\begin{equation}\label{norma-grad}
    \|u\|_{\nabla^{m}_{\alpha}}= \|\nabla^{m}_{\alpha} u\|_{L^{p}_{\alpha}}
\end{equation}
where, for $u\in D_{0,R}(\alpha,m,p)$
$$\nabla^{m}_{\alpha} u=\left\{\begin{array}{lc}
\Delta_{\alpha}^{\frac{m}{2}} u,&\mbox{if}\quad m\,\, \text{is even},\\
\left(\Delta_{\alpha}^{\frac{m-1}{2}} u \right)', &\mbox{if}\quad m\,\, \text{is odd}
\end{array}\right.$$
is the $\alpha$-generalized $m$-th order gradient. 
 We observe that, as we will see in Proposition~\ref{Equi-norms} below, the norms \eqref{norma-diric} and \eqref{norma-grad} are equivalent, provided that $\alpha-mp+1>0$. For this case, we can alternatively define  $\mathcal{D}^{m,p}_{R}(\alpha)$ to be
 the completion of $D_{0,R}(\alpha)$ under the norm \eqref{norma-grad}.

\noindent We would like to mention that the weighted Sobolev spaces $W^{m, p}_{R}$, $X^{m, p}_{R}$ and $\mathcal{D}^{m,p}_{R}(\alpha)$ hide surprises and some interesting points have been drawing attention since the pioneer work due to P. Cl\'{e}ment at al. \cite{Clement-deFigueiredo-Mitidieri}, see for instance \cite{Ibero1,Abreu,doOdeOliveira2014, JAJ,DCDS2019,CV2023, DoLuHa}. 

\noindent By using  Hardy-type inequalities in \cite{Opic}, Cl\'{e}ment at al. \cite[Proposition~1.0]{Clement-deFigueiredo-Mitidieri} were able to show that
	for the first order derivative $m=1$, the behavior of the functions in $u\in X^{1,p}_R(\alpha_0,\alpha_1)$ is driven by the parameters $\alpha_1$ and $p$.
 In fact, we can distinguish three  important cases depending on the conditions on $\alpha_1$ and $p$:

 \paragraph*{I}\textit{Sobolev:}
	$\alpha_1-p+1>0$. 
 
 \paragraph*{II}\textit{Trudinger-Moser:}  $\alpha_1-p+1=0.$
	
 \paragraph*{II}\textit{Morrey:}  $\alpha_1-p+1<0$.
 
In the bounded case $0<R<\infty$, if we assume the \textit{Sobolev} condition, according to \cite{Clement-deFigueiredo-Mitidieri}, we get the  continuous embedding
\begin{equation}\label{eq10}
	X^{1,p}_R(\alpha_0,\alpha_1)\hookrightarrow L^{q}_{\theta},  \;\; \mbox{if}\;\;  1 < q\leq  p^{*} \;\;\mbox{and}\;\; \min\left\{\theta,\alpha{_0}\right\}\ge \alpha_1-p,
	\end{equation}
where 
$$p^*=\frac{(\theta+1)p}{\alpha_1-p+1}$$ 
represents  the critical Sobolev exponent of $X^{1,p}_R(\alpha_0,\alpha_1)$. In addition, in the strict case $q<p^{*}$, the embedding \eqref{eq10} is compact.

In contrast, for the \textit{Trudinger-Moser} condition  we have the compact embedding 
	\begin{equation}\label{TM-embeddins1}
	X^{1,p}_R(\alpha_0,\alpha_1)\hookrightarrow L^{q}_{\theta}, \quad\mbox{if}\quad 1 < q < \infty \quad\mbox{and}\quad \theta>-1.
	\end{equation}
In fact, in this case, the	 maximal growth function $\phi(s), s\in\mathbb{R}$ such that $\phi\circ u\in L^{1}_{\theta}$,  for all $u\in  X^{1,p}_R(\alpha_0,\alpha_1) $
is of the exponential-type, see  \cite{CCM02017}.

Lastly, for the \textit{Morrey} condition  $\alpha_1-p+1<0$,  the continuous embedding 
 \begin{equation}\label{M-embedding}
    X^{1,p}_R(\alpha_0,\alpha_1)\hookrightarrow  C^{0,\sigma}([0,R]), \;\; \mbox{with}\;\;\sigma=\min\big\{1-\frac{\alpha_1+1}{p}, 1-\frac{1}{p}\big\} 
 \end{equation}
 was recently proved in \cite{DoLuHa}. 
 
Extensions of the embeddings \eqref{eq10}, \eqref{TM-embeddins1} and \eqref{M-embedding} for higher order derivatives $m\ge 2$  can be founded in \cite{DoLuHa} and \cite{JAJ}, but only for the bounded case $0<R<\infty$. In particular, according to \cite[Theorem 1.1]{JAJ}, for $0<R<\infty$, if we assume the  \textit{Sobolev} condition
 \begin{equation}\label{m-Sobolev}
 \alpha_m-mp+1>0
 \end{equation}
 then we obtain the continuous embedding
\begin{equation}\label{Xe77}
    X^{m,p}_{R} \hookrightarrow L_{\theta}^{q}, \quad \forall\,\, 1<q\leq p^{*} \quad \text{and}\quad \min \{\theta, \alpha_{j-1}\} \geq \alpha_{j} -p,\;\; j= 1, \cdots, m
\end{equation}
where 
$$p^{*} = p^{*}(\theta, p, m)=\dfrac{(\theta+ 1)p}{\alpha_{m} -mp +1}$$
is the critical Sobolev exponent of $ X^{m,p}_{R}$. Further, \eqref{Xe77} is compact in the strict case $q<p^*$.

In this work, we are mainly interested  in the \textit{Sobolev} case  for unbounded domain $R=\infty$. 
Our first result  reads below.

\begin{theorem}\label{c0-t2}
   Let $p\geq 2$, $\alpha, \theta>-1 $, $0<R\le \infty$ and let $m\in \mathbb{Z}_{+}$ such that 
   \begin{equation}\label{c0-cond1}
    \theta\ge \alpha-mp\;\;\mbox{and}\;\; \alpha-mp+1>0.
\end{equation}
Then there exists $C>0$ such that
\begin{equation}\label{c0-t2-eq}
     \left( \int_{0}^{R} |u(r)|^{p^{*}} r^{\theta} \, dr\right)^{\frac{1}{p^{*}}} \leq C \left( \int_{0}^{R} |\nabla^{m}_{\alpha} u(r)|^{p} r^{\alpha} \, dr \right)^{\frac{1}{p}},\;\;\mbox{for all}\;\; u \in \mathcal{D}^{m,p}_{R}(\alpha),
\end{equation}
where
  $$p^{*} = \dfrac{(\theta +1)p}{\alpha -mp +1}.$$ In particular,  we obtain the continuous embedding 
  \begin{equation}\label{e7full}
      \mathcal{D}^{m,p}_{R}(\alpha) \hookrightarrow L^{p^{*}}_{\theta}.
  \end{equation}
\end{theorem}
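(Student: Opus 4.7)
The plan is to reduce the statement to the already-known bounded case of \cite{JAJ} and then lift the result to $R=\infty$ by scaling and a cut-off. Since $\mathcal{D}^{m,p}_R(\alpha)$ is defined as the completion of $D_{0,R}(\alpha,m,p)$, it suffices to prove \eqref{c0-t2-eq} for $u\in D_{0,R}(\alpha,m,p)$; moreover, Proposition~\ref{Equi-norms} (applicable because $\alpha-mp+1>0$) allows one to replace $\|\nabla^m_\alpha u\|_{L^p_\alpha}$ by $\|u^{(m)}\|_{L^p_\alpha}$, so the task reduces to
\[
\|u\|_{L^{p^*}_\theta}\le C\,\|u^{(m)}\|_{L^p_\alpha}.
\]

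For $0<R<\infty$ I would apply the embedding \eqref{Xe77} with the specific weight sequence $\alpha_j:=\alpha-(m-j)p$ for $j=0,\dots,m$. With this choice, \eqref{trasition-C} holds with equality, \eqref{m-Sobolev} becomes exactly our hypothesis $\alpha-mp+1>0$, the compatibility condition $\min\{\theta,\alpha_{j-1}\}\ge \alpha_j-p$ reduces to $\theta\ge\alpha-mp$, and the critical exponent of \eqref{Xe77} coincides with $p^*$. Iterated weighted Hardy inequalities (all legitimate under $\alpha-mp+1>0$) show that $D_{0,R}(\alpha,m,p)\subset X^{m,p}_R(\alpha_0,\dots,\alpha_m)$, so \eqref{Xe77} yields the desired estimate with some constant $C_R$. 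A direct change of variable under the dilation $u_\lambda(r):=u(\lambda r)$ shows that both sides of the inequality scale as $\lambda^{-(\theta+1)/p^*}=\lambda^{(mp-\alpha-1)/p}$, the equality of the two exponents being the very definition of $p^*$. Consequently $C_R$ is independent of $R$.

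For $R=\infty$, I would introduce a smooth cut-off $\eta_n$ with $\eta_n\equiv 1$ on $[0,n]$, $\supp\eta_n\subset[0,2n)$, and $|\eta_n^{(k)}|\le C n^{-k}$. Applying the $R$-uniform bounded-case inequality to $u\eta_n\in D_{0,2n}(\alpha,m,p)$ and expanding with Leibniz,
\[
\|(u\eta_n)^{(m)}\|_{L^p_\alpha}\le\sum_{k=0}^{m}\binom{m}{k}\|\eta_n^{(k)}u^{(m-k)}\|_{L^p_\alpha}.
\]
For each cross term with $k\ge 1$, localised on $[n,2n]$, the bound $r^{kp}\le(2n)^{kp}$ combines with $|\eta_n^{(k)}|\le C n^{-k}$ to give $C\|u^{(m-k)}\|_{L^p_{\alpha-kp}}$, which in turn is controlled by $\|u^{(m)}\|_{L^p_\alpha}$ via $k$ iterated one-dimensional Hardy inequalities for functions vanishing at infinity (each licit because $\alpha-jp+1>0$ for $j<m$). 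Hence $\|(u\eta_n)^{(m)}\|_{L^p_\alpha}\le C\|u^{(m)}\|_{L^p_\alpha}$ uniformly in $n$; since $u\eta_n\to u$ pointwise, Fatou's lemma on the left-hand side delivers \eqref{c0-t2-eq}.

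The main technical obstacle is the truncation step, specifically the need to keep the Leibniz cross terms uniformly bounded in $n$. The decisive cancellation between the factor $n^{-kp}$ coming from the cut-off and the factor $n^{kp}$ allowed by the weight on the annulus $[n,2n]$ is essentially another incarnation of the scale invariance already exploited to make the bounded-case constant $R$-free, which is why the overall scheme remains internally consistent.
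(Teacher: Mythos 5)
Your opening reduction (replace $\|\nabla^m_\alpha u\|_{L^p_\alpha}$ by $\|u^{(m)}\|_{L^p_\alpha}$ via Proposition~\ref{Equi-norms}) matches the paper, but the route you then take through the bounded-case embedding \eqref{Xe77} has a genuine gap. With the weight chain $\alpha_j=\alpha-(m-j)p$ one has $\alpha_j-p=\alpha_{j-1}$, so the condition $\min\{\theta,\alpha_{j-1}\}\ge\alpha_j-p$ is exactly $\theta\ge\alpha_{j-1}$ for every $j=1,\dots,m$. The binding case is $j=m$, which gives $\theta\ge\alpha_{m-1}=\alpha-p$, \emph{not} $\theta\ge\alpha-mp$ as you claim; you appear to have only checked $j=1$. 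For $m\ge2$ the theorem's hypothesis allows $\alpha-mp\le\theta<\alpha-p$, a range on which \eqref{Xe77} (with any admissible choice of intermediate weights ending at $\alpha_m=\alpha$) simply does not apply, so the bounded-case inequality you want to scale and truncate is not available there. This is the whole point of the paper's Proposition~\ref{c0-prop1}: it proves the \emph{$m$-th order} weighted Hardy inequality in one stroke via Theorem~\ref{Teorema hardy}, whose admissibility is controlled by the two supremum quantities $\mathcal{A}_{m,0},\mathcal{A}_{m,1}$ and gives precisely the range $\theta\ge\alpha-mp$, strictly larger than what any chain of first-order embeddings yields.

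A secondary point: your cut-off/Fatou passage to $R=\infty$ is unnecessary. Theorem~\ref{Teorema hardy} and hence Proposition~\ref{c0-prop1} are stated and proved directly for $0<R\le\infty$ (the constants $\mathcal{A}_{m,0},\mathcal{A}_{m,1}$ are computed uniformly including $R=\infty$), so Corollary~\ref{D-embeding} already covers the unbounded case with no truncation. Moreover your truncation step silently uses the iterated Hardy estimate \eqref{transition-jump} on $(0,\infty)$ to control the cross terms, which is itself an $R=\infty$ Hardy inequality — so the construction is partly circular. The scaling computation showing $R$-independence of the constant is correct, but it is not needed once one has the inequality directly for $R=\infty$; in the paper that scaling argument appears only later (Lemma~\ref{a0-prop1}) to show that the best constant $\mathcal{S}$ does not depend on $R$.
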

\noindent It is worth mentioning that the estimate \eqref{c0-t2-eq} holds for either $0<R<\infty$ or $R=\infty$. In addition,  for each $0<R\le \infty$ we can define 
\begin{equation}\label{c0-e40}
  \mathcal{S}=\mathcal{S}(m,p,\alpha, \theta, R)  = \inf \left\{ \|\nabla_{\alpha}^{m} u \|^{p}_{L_{\alpha}^{p}}\, : \,\, u \in \mathcal{D}^{m,p}_{R}(\alpha),\; \|  u \|^{p}_{L_{\theta}^{p^{*}}}=1\right\}.
\end{equation}
Then, we have $\mathcal{S}>0$ and  $\mathcal{S}^{-\frac{1}{p}}$ is the \textit{best constant} for the Sobolev type embedding \eqref{e7full}. An interesting question in variational calculus is knowing when the best constant for critical Sobolev-type inequality is attained or not, see \cite{EFJ,S,WangWillem, Gazzola, Talenti} and references therein. In the case of $ \eqref{c0-t2-eq}$, as we will see in Lemma~\ref{a0-prop1} below, the constant  $\mathcal{S}(m,p,\alpha, \theta, R)$ does not depend on $R>0$. In addition, we are able to prove that $\mathcal{S}(m,2,\alpha, \theta, \infty)$ is attained. In order to obtain our attainability result we adopt the concentration-compactness argument due to P.L. Lions  \cite{PL3,PL4, WangWillem, WIL} for $\mathcal{D}^{m,2}_{\infty}(\alpha)$ to deal with the loss of compactness due to the critical exponent $2^{*} = 2(\theta + 1)/(\alpha -2m +1)$ and  the unbounded domain $R=\infty$, see Lemma~\ref{a0-prop2} below. Actually, we will prove the following:
\begin{theorem}\label{a0-teo1} Let $\alpha, \theta>-1 $ and let $m\in \mathbb{Z}_{+}$ such that $\theta\ge \alpha-2m$ and $\alpha-2m+1>0$. Then, 
 there exists $z\in \mathcal{D}^{m,2}_{\infty}(\alpha)$ such that $$\|z\|_{L^{2^{*}}_{\theta}} =1\;\mbox{and}\;\|\nabla^{m}_{\alpha} z\|^{2}_{L^{2}_{\alpha}} = \mathcal{S}(m,2,\alpha, \theta, \infty).$$
\end{theorem}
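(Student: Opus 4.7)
The plan is to solve the minimization problem \eqref{c0-e40} (with $p=2$, $R=\infty$) by adapting Lions's concentration-compactness principle to the radial, weighted setting, in the form provided by the forthcoming Lemma~\ref{a0-prop2}. The driving observation is that the Rayleigh-type quotient $\|\nabla^m_\alpha u\|_{L^2_\alpha}^2/\|u\|_{L^{2^*}_\theta}^2$ is invariant under the dilations $u_\lambda(r):=\lambda^{(\alpha-2m+1)/2}u(\lambda r)$, and this dilation invariance, combined with the noncompactness of the domain $(0,\infty)$ at both endpoints, is precisely the source of the loss of compactness.

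First I fix a minimizing sequence $\{u_n\}\subset \mathcal{D}^{m,2}_\infty(\alpha)$ with $\|u_n\|_{L^{2^*}_\theta}=1$ and $\|\nabla^m_\alpha u_n\|_{L^2_\alpha}^2\to\mathcal{S}$. Using the continuous, nondecreasing concentration function $Q_n(\lambda):=\int_0^\lambda |u_n|^{2^*}r^\theta\,dr$, which runs from $0$ to $1$, I choose $\lambda_n>0$ with $Q_n(\lambda_n)=1/2$ and rescale to $v_n(r):=\lambda_n^{(\alpha-2m+1)/2}u_n(\lambda_n r)$. The new sequence $\{v_n\}$ is still minimizing and is now anchored at the unit scale, with half of its $L^{2^*}_\theta$-mass on $(0,1)$ and half on $(1,\infty)$. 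Since $\{v_n\}$ is bounded in the Hilbert space $\mathcal{D}^{m,2}_\infty(\alpha)$, up to a subsequence $v_n\rightharpoonup z$ weakly, and the finite Radon measures $d\mu_n:=|\nabla^m_\alpha v_n|^2 r^\alpha\,dr$ and $d\nu_n:=|v_n|^{2^*}r^\theta\,dr$ converge weakly-$*$ on $[0,\infty]$ to measures $\mu,\nu$.

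Next I apply Lemma~\ref{a0-prop2} to obtain a Lions-type decomposition
\begin{align*}
\nu &= |z|^{2^*}r^\theta\,dr+\sum_{j\in J}\nu_j\delta_{x_j}+\nu_0\delta_0+\nu_\infty\delta_\infty,\\
\mu &\ge |\nabla^m_\alpha z|^2 r^\alpha\,dr+\sum_{j\in J}\mu_j\delta_{x_j}+\mu_0\delta_0+\mu_\infty\delta_\infty,
\end{align*}
with atomic masses satisfying $\mu_\star\ge \mathcal{S}\,\nu_\star^{2/2^*}$ for each $\star$ (interior or endpoint). Writing $a:=\int|z|^{2^*}r^\theta\,dr$ and letting $b_\star$ denote the various atomic masses, the normalization $\int d\nu_n=1$ gives $a+\sum_\star b_\star=1$, while $\int d\mu_n\to\mathcal{S}$ combined with Theorem~\ref{c0-t2} applied to $z$ yields $a^{2/2^*}+\sum_\star b_\star^{2/2^*}\le 1$. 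As $2/2^*<1$, the elementary inequality $x^{2/2^*}\ge x$ on $[0,1]$ (with equality only at $x\in\{0,1\}$) forces exactly one of the quantities $a$, $\{b_\star\}$ to equal $1$ and all the others to vanish.

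It then remains to single out $a$ as the surviving term. The construction of $\lambda_n$ rules out $\nu_0=1$ or $\nu_\infty=1$, since at most half of the $\nu_n$-mass can lie near either endpoint. An interior atom $\nu_{j_0}=1$ at some $x_{j_0}\in(0,\infty)$ is excluded by a second use of dilation invariance: rescaling $x_{j_0}$ to $1$ and invoking compactness of $\mathcal{D}^{m,2}_\infty(\alpha)\hookrightarrow L^q_\theta$ on bounded annuli $[a,b]\subset(0,\infty)$ for subcritical $q<2^*$ (a consequence of \eqref{Xe77} applied on such annuli), one forces $z\neq 0$, contradicting $a=0$. Therefore $a=1$, so $\|z\|_{L^{2^*}_\theta}=1$, and weak lower semicontinuity of the Navier norm together with the defining property of $\mathcal{S}$ give $\|\nabla^m_\alpha z\|_{L^2_\alpha}^2=\mathcal{S}$. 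I expect the main obstacle to lie in the simultaneous handling of the two endpoint alternatives $r\to 0$ and $r\to\infty$, where the weights $r^\alpha$ and $r^\theta$ behave very differently; the choice $Q_n(\lambda_n)=1/2$ is what permits both to be controlled at once, and correctly formulating the concentration-compactness lemma to include both endpoint atoms (at $0$ and at $\infty$) is the crucial technical input.
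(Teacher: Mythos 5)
Your proposal follows the paper's strategy closely: a minimizing sequence, the half-mass dilation normalization $Q_n(\lambda_n)=1/2$, a concentration-compactness decomposition, strict subadditivity of $t\mapsto t^{2/2^*}$ on $[0,1]$ to obtain a pure dichotomy, and the balance condition to rule out escape at both endpoints $r\to 0$ and $r\to\infty$. Those steps are all in the spirit of the paper's argument, and you are right that the measure-theoretic lemma must be formulated to account for both ends of the half-line.

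There is, however, a genuine gap in the step where you exclude an interior atom $\nu_{j_0}=1$. You claim that, after moving $x_{j_0}$ to $r=1$ by dilation, the compact embedding of $\mathcal{D}^{m,2}_{\infty}(\alpha)$ into subcritical $L^{q}_{\theta}$ on bounded annuli $[a,b]\subset(0,\infty)$ forces $z\neq 0$, contradicting $a=0$. That inference does not hold: concentration of all the $L^{2^*}_{\theta}$-mass at the single point $x_{j_0}$ is perfectly compatible with $v_n\rightharpoonup 0$ weakly in $\mathcal{D}^{m,2}_{\infty}(\alpha)$ \emph{and} with $v_n\to 0$ strongly in every subcritical $L^{q}_{\theta}([a,b])$ simultaneously. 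Subcritical norms on compacta cannot see a critical bubble — that is precisely why $2^*$ is critical — so compactness on annuli yields no contradiction, and your claim that $z\neq 0$ does not follow. The paper kills this alternative by a different and essential mechanism: if $z=0$ and all the $L^{2^*}_{\theta}$-mass is carried by $\zeta$ (so $\|\zeta\|=1$ and $\zeta_\infty=0$), then the identities \eqref{a0-e18} and the inequality \eqref{a0-e1} force the \emph{equality} $\|\zeta\|^{2/2^*}=\mathcal{S}^{-1}\|\mu\|$, at which point the rigidity clause at the end of Lemma~\ref{a0-prop2} shows that $\mu$ and $\zeta$ must each be a single Dirac mass; this single-point concentration is then incompatible with the normalization $\int_0^1|v_k|^{2^*}r^{\theta}\,dr=1/2$. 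Your proposal never invokes the equality/rigidity part of the concentration-compactness lemma, and that is the piece that actually closes the proof.
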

\noindent It is well known that the Sobolev-type inequalities and their extremals are basic tools in many aspects of mathematical analysis and partial differential equations. As application of  Theorem~\ref{c0-t2} and Theorem~\ref{a0-teo1} we provide the following existence result for a general class of elliptic equations driven by 
polyharmonic operator.

\begin{corollary}\label{d1} For $\alpha, \theta$ and $m$ under the assumptions of the Theorem~\ref{a0-teo1}, there exists an weak solution  $z\in \mathcal{D}^{m,2}_{\infty}(\alpha)$ to the critical semilinear equation 
     \begin{equation}\label{c0-e8}
    (-\Delta_{\alpha})^{m} u = r^{\theta-\alpha} |u|^{2^{*}-2} u \,\, \text{in}\,\, (0, \infty)
\end{equation}
where $\Delta_{\alpha}$ is given by \eqref{laplace-geral} and $2^{*} = 2(\theta + 1)/(\alpha -2m +1)$ is the critical exponent.
\end{corollary}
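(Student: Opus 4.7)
The plan is to obtain the desired weak solution of (\ref{c0-e8}) by reading off the Euler--Lagrange equation of the constrained variational problem (\ref{c0-e40}) solved by the extremal $z$ from Theorem~\ref{a0-teo1} and then performing a single scalar rescaling to absorb the Lagrange multiplier. Since $z$ realises the infimum $\mathcal{S}=\mathcal{S}(m,2,\alpha,\theta,\infty)$ under the constraint $\|u\|_{L^{2^{*}}_{\theta}}=1$, and since the embedding (\ref{e7full}) of Theorem~\ref{c0-t2} guarantees that the functionals
\[
J(u):=\tfrac{1}{2}\|\nabla^{m}_{\alpha}u\|_{L^{2}_{\alpha}}^{2},\qquad G(u):=\tfrac{1}{2^{*}}\|u\|_{L^{2^{*}}_{\theta}}^{2^{*}}
\]
are of class $C^{1}$ on the Hilbert space $\mathcal{D}^{m,2}_{\infty}(\alpha)$, the Lagrange multiplier principle will provide $\lambda\in\R$ with $J'(z)=\lambda G'(z)$, i.e.
\[
\int_{0}^{\infty}\nabla^{m}_{\alpha}z\,\nabla^{m}_{\alpha}\phi\,r^{\alpha}\,\ud r \;=\; \lambda\int_{0}^{\infty}|z|^{2^{*}-2}z\,\phi\,r^{\theta}\,\ud r,\qquad \forall\,\phi\in\mathcal{D}^{m,2}_{\infty}(\alpha).
\]
Testing against $\phi=z$ and using $\|z\|_{L^{2^{*}}_{\theta}}^{2^{*}}=1$ together with $\|\nabla^{m}_{\alpha}z\|_{L^{2}_{\alpha}}^{2}=\mathcal{S}$ forces $\lambda=\mathcal{S}$.

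With this identity in hand I would next set $w:=\mathcal{S}^{1/(2^{*}-2)}z$. Linearity of $\nabla^{m}_{\alpha}$ and the matching homogeneities ($1$ on the left-hand side, $2^{*}-1$ on the right) show that the constant $\mathcal{S}$ is exactly cancelled, giving
\[
\int_{0}^{\infty}\nabla^{m}_{\alpha}w\,\nabla^{m}_{\alpha}\phi\,r^{\alpha}\,\ud r \;=\; \int_{0}^{\infty}|w|^{2^{*}-2}w\,\phi\,r^{\theta}\,\ud r,\qquad \forall\,\phi\in\mathcal{D}^{m,2}_{\infty}(\alpha).
\]
This is the natural weak formulation of (\ref{c0-e8}) in the present setting: multiplying the pointwise equation by $\phi\in D_{0,\infty}(\alpha,m,2)$, weighting by $r^{\alpha}$, and integrating by parts $m$ times through the basic identity $\int_{0}^{\infty}(\Delta_{\alpha}f)\,g\,r^{\alpha}\,\ud r=-\int_{0}^{\infty}f'g'\,r^{\alpha}\,\ud r$ produces the left-hand side, while the factor $r^{\theta-\alpha}$ in (\ref{c0-e8}) combines with the $r^{\alpha}$ coming from the test to give precisely $r^{\theta}$.

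The main obstacle I anticipate is not the Lagrange step or the scaling, both of which are essentially formal once regularity is in place, but rather the verification that $G$ is Fr\'echet differentiable at the \emph{critical} growth rate, where the embedding (\ref{e7full}) is only continuous, not compact. The differentiability must therefore be extracted from the continuity of (\ref{e7full}) combined with a uniform majorant for the difference quotients of $s\mapsto|s|^{2^{*}}$ and an application of dominated convergence, carried out on the weighted measure $r^{\theta}\,\ud r$. A secondary point that requires a brief justification is that test functions in $\mathcal{D}^{m,2}_{\infty}(\alpha)$ need not vanish in any strong sense at $r=0$ or $r=\infty$; however, the density of $D_{0,\infty}(\alpha,m,2)$ in $\mathcal{D}^{m,2}_{\infty}(\alpha)$, built into the very definition of the space, guarantees that the displayed weak identity faithfully encodes the distributional validity of (\ref{c0-e8}) on the open interval $(0,\infty)$ with no spurious boundary contributions.
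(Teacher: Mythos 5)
Your proposal is correct and follows essentially the same route as the paper: realize the extremal $z$ from Theorem~\ref{a0-teo1} as a constrained minimizer, apply the Lagrange multiplier theorem, pin down $\lambda$ by testing against $z$ itself, and then rescale by $\mathcal{S}^{1/(2^{*}-2)}$ to absorb the multiplier and obtain a critical point of the free energy functional. The only (cosmetic) difference is the normalization of the constraint and objective functionals, which simply shifts the value of $\lambda$ (the paper obtains $2^{*}\lambda = 2\mathcal{S}$ rather than $\lambda = \mathcal{S}$) but leads to the identical rescaled solution.
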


The paper is organized as follows. Section~\ref{sec2} is devoted to the proof of
Theorem~\ref{c0-t2} by argument based on a class of Hardy-type inequalities, see Theorem~\ref{Teorema hardy} below. The attainability result Theorem~\ref{a0-teo1} and Corollary~\ref{d1} are proven in Section~\ref{sec3}.

\section{Critical Sobolev-type inequality on the entire space}
\label{sec2}
In this section we prove the critical Sobolev-type inequality  stated in Theorem \ref{c0-t2}. 
The following Hardy-type inequality is discussed by Kufner-Opic \cite[Chap. 1. Sec.10, pg. 142] {Opic}. See also, \cite[Theorem 4.3, Remark 4.4]{KP}.
\begin{theorem}\label{Teorema hardy}
    Let $1<p\le q < \infty$, $0<R\le\infty$, $m\in \mathbb{Z}_{+}$ and $v, z$ measurable and positive functions in $(0, R)$. Then there is $C>0$ such that 
    \begin{equation}\label{Hardy-m ordem}
     \left( \int_{0}^{R} |u(r)|^{q} z(r) \, dr\right)^{\frac{1}{q}} \leq C \left( \int_{0}^{R} |u^{(m)}(r)|^{p} v(r) \, dr \right)^{\frac{1}{p}},   
    \end{equation}
holds,  for all $u \in AC^{m-1}_{\mathrm{R}}(0, R)$ if and only if $$\mathcal{A} = \max \{ \mathcal{A}_{m, 0}, \mathcal{A}_{m, 1}\} < \infty,$$ where  
\begin{eqnarray*}
    \mathcal{A}_{m, 0} &=&  \sup_{0<t<R} \bigg( \int_{0}^{t} (t-r)^{(m-1)q} z(r) \, dr\bigg)^{\frac{1}{q}} \bigg( \int_{t}^{R} v^{-\frac{1}{p-1}}(r) \, dr\bigg)^{\frac{p-1}{p}};
    \\
\mathcal{A}_{m, 1} &=& \sup_{0<t<R} \bigg( \displaystyle\int_{0}^{t}  z(r) \, dr\bigg)^{\frac{1}{q}}  \bigg( \int_{t}^{R} (r-t)^{\frac{p(m-1)}{p-1}} v^{-\frac{1}{p-1}}(r) \, dr\bigg)^{\frac{p-1}{p}}.
\end{eqnarray*}
\end{theorem}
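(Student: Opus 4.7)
The plan is to reduce Theorem~\ref{Teorema hardy} to a weighted norm inequality for a single Volterra-type integral operator with polynomial kernel, and then characterize its boundedness by testing against carefully chosen functions for necessity and by decomposing the kernel for sufficiency. Since $u\in AC^{m-1}_{\mathrm R}(0,R)$ satisfies $u^{(j)}(R)=0$ for $j=0,\ldots,m-1$, iterated integration by parts (equivalently, Taylor's formula with integral remainder based at $R$) yields the pointwise representation
$$|u(r)|\leq \frac{1}{(m-1)!}\,(Tg)(r),\qquad (Tg)(r):=\int_r^R (s-r)^{m-1}g(s)\,ds,$$
with $g=|u^{(m)}|\geq 0$. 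Conversely, any nonnegative $g\in L^p_v$ arises in this way from a suitable $u\in AC^{m-1}_{\mathrm R}(0,R)$. Hence \eqref{Hardy-m ordem} is equivalent to the two-weight estimate $\|Tg\|_{L^q_z}\leq C'\|g\|_{L^p_v}$ for all nonnegative $g$.

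For \textbf{necessity} I would use two families of test functions, each isolating one of the constants. Fix $t\in(0,R)$. Plugging in $g_t(s)=v(s)^{-1/(p-1)}\chi_{(t,R)}(s)$ and using the pointwise bound $(s-r)^{m-1}\geq(t-r)^{m-1}$ for $r<t<s$, one obtains $(Tg_t)(r)\geq (t-r)^{m-1}\int_t^R v^{-1/(p-1)}$ on $(0,t)$; comparing the two sides of the assumed norm inequality together with $\|g_t\|_{L^p_v}^p=\int_t^R v^{-1/(p-1)}$ and rearranging gives $\mathcal{A}_{m,0}<\infty$. Analogously, the test function $\tilde g_t(s)=(s-t)_{+}^{(m-1)/(p-1)}v(s)^{-1/(p-1)}\chi_{(t,R)}(s)$ combined with the bound $(s-r)^{m-1}\geq(s-t)^{m-1}$ for $r<t<s$ produces $\mathcal{A}_{m,1}<\infty$ in the same fashion.

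For \textbf{sufficiency}, the non-separability of the kernel $(s-r)^{m-1}$ in $r$ and $s$ forces a genuine decomposition argument; this is the main obstacle. Assuming $\mathcal{A}<\infty$, I would follow the Kufner--Opic / Sinnamon--Stepanov scheme: choose a breakpoint $t(r)\in(r,R)$ defined dyadically with respect to the measure $v^{-1/(p-1)}ds$, and split
$$Tg(r)=\int_r^{t(r)}(s-r)^{m-1}g(s)\,ds+\int_{t(r)}^R(s-r)^{m-1}g(s)\,ds.$$
On the first piece use $(s-r)^{m-1}\leq(t(r)-r)^{m-1}$; on the second use the algebraic bound $(s-r)^{m-1}\lesssim (t(r)-r)^{m-1}+(s-t(r))^{m-1}$. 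Each resulting piece reduces to a first-order (Muckenhoupt) Hardy inequality whose constant is controlled exactly by $\mathcal{A}_{m,0}$ (the ``near-diagonal'' regime where the kernel is pulled out as $(t(r)-r)^{m-1}$) and by $\mathcal{A}_{m,1}$ (the ``far-field'' regime where the kernel is absorbed into the inner weight as $(s-t(r))^{m-1}$), respectively. Summing over the dyadic levels then closes the estimate. The two supremum conditions of the theorem thus correspond precisely to the two qualitatively different regimes of the kernel, which is exactly why a single Muckenhoupt-type condition (as in the $m=1$ case) no longer suffices. Since the statement is essentially a classical result, I would present the reduction and the necessity argument in full and refer to \cite{Opic} and \cite{KP} for the technical dyadic decomposition in Step~3.
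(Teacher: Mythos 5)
The paper does not prove this statement at all: Theorem~\ref{Teorema hardy} is quoted as a known characterization of higher-order two-weight Hardy inequalities, with the proof delegated to Kufner--Opic \cite{Opic} and Kufner--Persson \cite{KP}. So there is no internal argument to compare against; what matters is whether your sketch is a faithful and correct outline of the cited result, and it is. Your reduction via Taylor's formula based at $R$ (using $u^{(j)}(r)\to 0$ as $r\to R$) to the Volterra operator $Tg(r)=\int_r^R(s-r)^{m-1}g(s)\,ds$, and your two test functions $g_t=v^{-1/(p-1)}\chi_{(t,R)}$ and $\tilde g_t=(s-t)_+^{(m-1)/(p-1)}v^{-1/(p-1)}\chi_{(t,R)}$, are exactly the standard necessity argument, and the computations you indicate do yield $\mathcal{A}_{m,0}\le C$ and $\mathcal{A}_{m,1}\le C$ (modulo the routine truncation needed when $\int_t^R v^{-1/(p-1)}\,dr=\infty$, and the observation that for $R=\infty$ the Taylor representation requires the integral defining $Tg$ to converge, which is handled by approximation in the references). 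For sufficiency you correctly identify that the kernel $(s-r)^{m-1}$ satisfies the Oinarov-type splitting $(s-r)^{m-1}\le 2^{m-2}\bigl[(t-r)^{m-1}+(s-t)^{m-1}\bigr]$ for $r<t<s$, which is precisely why the two conditions $\mathcal{A}_{m,0}$, $\mathcal{A}_{m,1}$ appear, and you defer the technical decomposition to \cite{Opic,KP} -- which is no weaker than what the paper itself does, since the paper defers the entire theorem. In short: your proposal is a correct account of the classical proof; the only caveats are the minor integrability/truncation points noted above, which do not affect the structure of the argument.
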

\begin{remark}\label{Remar-left}
   As observed in \cite{KP},  in a similar way we can describe necessary and sufficient conditions to make equation \eqref{Hardy-m ordem} true for the set 
   \begin{equation}\nonumber
    AC^{m-1}_{\mathrm{L}}(0,R)=\big\{u\in AC^{m-1}_{loc}(0,R)\, :\, \lim_{r \to 0} u^{(j)}(r) = 0,\; j=0,1,\cdots, m-1\big\}.
\end{equation}
In fact, in this case,   \eqref{Hardy-m ordem} hold for $1<p\le q < \infty$ if and only if
   $$\tilde{\mathcal{A}} = \max \{ \tilde{\mathcal{A}}_{m, 0}, \tilde{\mathcal{A}}_{m, 1}\} < \infty,$$ where  
\begin{eqnarray*}
    \tilde{\mathcal{A}}_{m, 0} &=&  \sup_{0<t<R} \bigg( \int_{t}^{R} (r-t)^{(m-1)q} z(r) \, dr\bigg)^{\frac{1}{q}} \bigg( \int_{0}^{t} v^{-\frac{1}{p-1}}(r) \, dr\bigg)^{\frac{p-1}{p}};
    \\
\tilde{\mathcal{A}}_{m, 1} &=& \sup_{0<t<R} \bigg( \displaystyle\int_{t}^{R}  z(r) \, dr\bigg)^{\frac{1}{q}} \bigg( \int_{0}^{t} (t-r)^{\frac{p(m-1)}{p-1}} v^{-\frac{1}{p-1}}(r) \, dr\bigg)^{\frac{p-1}{p}}.
\end{eqnarray*}
\end{remark}
\noindent As by-product of Theorem~\ref{Teorema hardy} and Remark~\ref{Remar-left} we have the following:
\begin{proposition}\label{c0-prop1}
Let $0<R\le \infty$, $p>1$, $m\in \mathbb{Z}^{*}_{+}$, and $\gamma,\theta\in\mathbb{R}$ such that $\gamma - mp +1 \not =0.$ 
Then the inequality
\begin{equation}\label{c0-eprop1}
     \left( \int_{0}^{R} |u(r)|^{p^{*}} r^{\theta} \, dr\right)^{\frac{1}{p^{*}}} \leq C \left( \int_{0}^{R} |u^{(m)}(r)|^{p} r^{\gamma} \, dr \right)^{\frac{1}{p}}, \; \mbox{with}\; p^{*}= \frac{(\theta +1)p}{\gamma -mp +1}
\end{equation}
holds for some $C>0$ (independent of $u$) under the conditions:
\begin{enumerate}
    \item [$(a)$] for all $u \in AC^{m-1}_{\mathrm{R}}(0, R)$ if $\theta\ge \gamma-mp$ and  $\gamma - mp +1 > 0$. 
    \item[$(b)$] for all $u \in AC^{m-1}_{\mathrm{L}}(0, R)$ if $\theta\le \gamma-mp$ and  $\gamma - p +1 <0$.
\end{enumerate}
\end{proposition}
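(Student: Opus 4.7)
Plan. The approach is to specialize Theorem~\ref{Teorema hardy} to case $(a)$ and Remark~\ref{Remar-left} to case $(b)$, with the power weights $z(r)=r^{\theta}$, $v(r)=r^{\gamma}$ and exponent $q=p^{*}$. What makes the proof go through is that $p^{*}$ is tailored so that the products of $t$-powers appearing in the Hardy constants $\mathcal{A}_{m,0}$, $\mathcal{A}_{m,1}$ (resp.\ $\tilde{\mathcal{A}}_{m,0}$, $\tilde{\mathcal{A}}_{m,1}$) collapse to $t^{0}$; this is nothing but the scale invariance of the desired inequality under $u(\cdot)\mapsto u(\lambda\,\cdot)$. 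The preliminary check $1<p\le p^{*}<\infty$ is immediate: in $(a)$, $\theta+1\ge\gamma-mp+1>0$ yields $p^{*}\ge p$, and in $(b)$ both $\theta+1$ and $\gamma-mp+1$ are negative with $|\theta+1|\ge|\gamma-mp+1|$, so again $p^{*}\ge p$.

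For case $(a)$ I would evaluate the two integrals defining $\mathcal{A}_{m,0}$ via the scaling $r=ts$. The first becomes a constant multiple of $t^{(m-1)p^{*}+\theta+1}$ through a Beta integral, valid because $\theta>\gamma-mp>-1$. For the second, $\gamma>mp-1\ge p-1$ ensures that $\int_{t}^{\infty}r^{-\gamma/(p-1)}\,\ud r$ is finite and proportional to $t^{(p-1-\gamma)/(p-1)}$; positivity yields $\int_{t}^{R}\le\int_{t}^{\infty}$ for any $R\le\infty$. Combining gives $\mathcal{A}_{m,0}\le C\sup_{t>0}t^{E_{0}}$ with $E_{0}=(m-1)+(\theta+1)/p^{*}+(p-1-\gamma)/p$, and substituting $(\theta+1)/p^{*}=(\gamma-mp+1)/p$ makes $E_{0}$ vanish identically. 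The same scheme treats $\mathcal{A}_{m,1}$: after $r=ts$ the inner integral $\int_{1}^{\infty}(s-1)^{p(m-1)/(p-1)}s^{-\gamma/(p-1)}\,\ud s$ converges exactly because $\gamma>mp-1$, and the resulting $t$-exponent again cancels.

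Case $(b)$ is the mirror image. Applying Remark~\ref{Remar-left}, the integrals near $0$ and $R$ exchange roles; convergence of $\int_{0}^{t}r^{-\gamma/(p-1)}\,\ud r$ near $0$ requires $\gamma<p-1$, which is precisely $\gamma-p+1<0$, while convergence of the outer integral at infinity follows from the identity $(m-1)p^{*}+\theta+1=(\theta+1)(\gamma-p+1)/(\gamma-mp+1)<0$ under the sign conditions of $(b)$. The main obstacle throughout is nothing beyond exponent bookkeeping; there is no genuine analytic difficulty past the Hardy inequality already in hand, since $p^{*}$ has been defined precisely so that the suprema in all four Hardy constants are uniform in $t$.
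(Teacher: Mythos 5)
Your proposal is correct and follows the same route as the paper: apply the Kufner--Opic Hardy inequality (Theorem~\ref{Teorema hardy} for $(a)$, Remark~\ref{Remar-left} for $(b)$) with $z(r)=r^{\theta}$, $v(r)=r^{\gamma}$, $q=p^{*}$, verify $1<p\le p^{*}$, and check that each Hardy constant scales as $t^{0}$ because of the definition of $p^{*}$. The only cosmetic difference is that you extract the $t$-power by the substitution $r=ts$ (reducing to a Beta integral), while the paper uses the cruder pointwise bounds $(t-r)^{(m-1)p^{*}}\le t^{(m-1)p^{*}}$ and $(r-t)^{p(m-1)/(p-1)}\le r^{p(m-1)/(p-1)}$; both yield the same exponent bookkeeping and the same conclusion.
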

\begin{proof}
   $(a)$ For $\theta\ge \gamma-mp$ and  $\gamma- mp +1 > 0$ we have $1<p\le p^{*}$. Then we are in a position to apply Theorem~\ref{Teorema hardy} with $z(r) = r^{\theta}$ and $v(r)= r^{\gamma}$. For any $0<r<t<R\le \infty$ we have $(t-r)^{(m-1)p^*}\le t^{(m-1)p^*}$, thus 
$$
0\le \int_{0}^{t} (t-r)^{(m-1)p^{*}} r^{\theta} \, dr\le  t^{(m-1)p^*}\int_{0}^{t}r^{\theta} \, dr=\frac{1}{\theta+1}t^{(m-1)p^*+\theta+1},
$$
for all $0<t<R$.
Noticing that $\gamma-p+1\ge \gamma-mp+1>0$ and $\theta\ge \gamma-mp>-1$ we can write
\begin{equation*}
\begin{aligned}
   & 0\le \bigg( \int_{0}^{t} (t-r)^{(m-1)p^{*}} r^{\theta} \, dr\bigg)^{\frac{1}{p^{*}}} \bigg( \int_{t}^{R} r^{-\frac{\gamma}{p-1}} \, dr\bigg)^{\frac{p-1}{p}}\\
   &\le \bigg( \frac{1}{\theta+1}t^{(m-1)p^*+\theta+1}\bigg)^{\frac{1}{p^{*}}}\bigg( \int_{t}^{R} r^{-\frac{\gamma}{p-1}} \, dr\bigg)^{\frac{p-1}{p}}\\
&=\left\{\begin{aligned}
&(\theta+1)^{-\frac{1}{p^{*}}}\left[\frac{p-1}{\gamma -p +1}\Big(1- \Big(\frac{t}{R}\Big)^{\frac{\gamma-p+1}{p-1}}\Big)\right]^{\frac{p-1}{p}},\;&\mbox{if}&\; 0<R<\infty\\
& (\theta+1)^{-\frac{1}{p^*}}\left[ \frac{p-1}{\gamma -p +1}\right]^{\frac{p-1}{p}},\;&\mbox{if}&\; R=\infty.
\end{aligned}\right.
\end{aligned}
\end{equation*}
where we have used that $m-1 + \frac{\theta+1}{p^{*}} - \frac{\gamma-p+1}{p}=0$. Thus, we obtain
\begin{eqnarray*}
    \mathcal{A}_{m, 0} = \sup_{0<t<R} \bigg( \int_{0}^{t} (t-r)^{(m-1)p^{*}} r^{\theta} \, dr\bigg)^{\frac{1}{p^{*}}} \bigg( \int_{t}^{R} r^{-\frac{\gamma}{p-1}} \, dr\bigg)^{\frac{p-1}{p}}<\infty,
\end{eqnarray*}
for all $0<R\le \infty$.
Analogously, $(r-t)^{p(m-1)/(p-1)}\le r^{p(m-1)/(p-1)}$ for any $0<t<r<R\le \infty$ yields
\begin{equation*}
\begin{aligned}
   & 0\le \bigg(\int_{0}^{t}  r^{\theta} \, dr\bigg)^{\frac{1}{p^{*}}}\bigg( \int_{t}^{R} (r-t)^{\frac{p(m-1)}{p-1}} r^{-\frac{\gamma}{p-1}} \, dr\bigg)^{\frac{p-1}{p}}\\
   &\le \left[ (\theta+1)^{-1} t^{\theta+1}\right]^{\frac{1}{p^{*}}}\left[\frac{p-1}{\gamma-mp+1}\Big( t^{- \frac{\gamma-mp+1}{p-1}}-R^{- \frac{\gamma-mp+1}{p-1}}\Big)\right]^{\frac{p-1}{p}} \\
&=\left\{\begin{aligned}
&(\theta+1)^{-\frac{1}{p^{*}}}\left[\frac{p-1}{\gamma -mp +1}\Big(1- \Big(\frac{t}{R}\Big)^{\frac{\gamma-mp+1}{p-1}}\Big)\right]^{\frac{p-1}{p}},\;&\mbox{if}&\; 0<R<\infty\\
& (\theta+1)^{-\frac{1}{p^*}}\left[ \frac{p-1}{\gamma -mp +1}\right]^{\frac{p-1}{p}},\;&\mbox{if}&\; R=\infty
\end{aligned}\right.
\end{aligned}
\end{equation*}
where we have used that $\frac{\theta+1}{p^{*}} -\frac{\gamma-mp+1}{p}=0$. Consequently, 
\begin{eqnarray*}
    \mathcal{A}_{m, 1} = \sup_{t > 0} \bigg( \displaystyle\int_{0}^{t}  r^{\theta} \, dr\bigg)^{\frac{1}{p^{*}}}\bigg( \int_{t}^{\infty} (r-t)^{\frac{p(m-1)}{p-1}} r^{-\frac{\gamma}{p-1}} \, dr\bigg)^{\frac{p-1}{p}}<\infty,
\end{eqnarray*}
for all  $0<R\le \infty$. It follows that $\mathcal{A}=\max \{ \mathcal{A}_{m, 0}, \mathcal{A}_{m, 1}\} < \infty$, for any $0<R\le \infty$. This proves item $(a)$. Analogously, taking into account the Remark~\ref{Remar-left} we can prove item $(b)$. Indeed, by assumptions $\theta \leq \gamma-mp$ and  $\gamma- p +1 < 0$ follows $1<p\le p^{*}$ and $\theta<-1$. So, we can apply Theorem~\ref{Teorema hardy} with $z(r) = r^{\theta}$ and $v(r)= r^{\gamma}$. For any
 $0<t<r<R\le \infty$ we have $(r-t)^{(m-1)p^*}\le r^{(m-1)p^*}$. Consequently, since $\theta + (m-1)p^*+1 = {(\theta+1)(\gamma-p+1)}/{(\gamma-mp+1)}:=\chi_0<0$, we can write
 \begin{eqnarray*}
    0&\le& \int_{t}^{R} (r-t)^{(m-1)p^{*}} r^{\theta} \, dr\\
    &\le&  \int_{t}^{R}r^{\theta + (m-1)p^*} \, dr\\
    &=&-\frac{1}{\chi_0} \left\{\begin{aligned}
&t^{\chi_0}\Big(1- \Big(\frac{R}{t}\Big)^{\chi_0}\Big),\;&\mbox{if}&\; 0<R<\infty\\
& t^{\chi_0},\;&\mbox{if}&\; R=\infty.
\end{aligned}\right.
\end{eqnarray*}
Thus,
\begin{equation*}
\begin{aligned}
   & 0\le \bigg( \int_{t}^{R} (r-t)^{(m-1)p^{*}} r^{\theta} \, dr\bigg)^{\frac{1}{p^{*}}}\bigg( \int_{0}^{t} r^{-\frac{\gamma}{p-1}} \, dr\bigg)^{\frac{p-1}{p}}\\
   &\le \bigg( \int_{t}^{R} (r-t)^{(m-1)p^{*}} r^{\theta} \, dr\bigg)^{\frac{1}{p^{*}}} \bigg[ \frac{1-p}{\gamma-p+1} t^{-\frac{\gamma-p+1}{p-1}}\bigg]^{\frac{p-1}{p}}\\
&=\left\{\begin{aligned}
&\bigg(-\frac{1}{\chi_0}\bigg)^{-\frac{1}{p^{*}}}\left[\frac{1-p}{\gamma -p + 1}\Big(1- \Big(\frac{R}{t}\Big)^{\chi_0}\Big)\right]^{\frac{p-1}{p}},\;&\mbox{if}&\; 0<R<\infty\\
& \bigg(-\frac{1}{\chi_0}\bigg)^{-\frac{1}{p^*}}\left[ \frac{1-p}{\gamma -p +1}\right]^{\frac{p-1}{p}},\;&\mbox{if}&\; R=\infty
\end{aligned}\right.
\end{aligned}
\end{equation*}
since that
$$\frac{\chi_0}{p^{*}} = \frac{\gamma-p+1}{p}.$$ 
Therefore, for every $0<R\le \infty$ we see
\begin{eqnarray*}
    \tilde{\mathcal{A}}_{m, 0} = \sup_{0<t<R} \bigg( \int_{t}^{R} (r-t)^{(m-1)p^{*}} r^{\theta} \, dr\bigg)^{\frac{1}{p^{*}}} \bigg( \int_{0}^{t} r^{-\frac{\gamma}{p-1}} \, dr\bigg)^{\frac{p-1}{p}}<\infty.
\end{eqnarray*}
To estimate $\tilde{\mathcal{A}}_{m,1}$ we proceed similarly as before. Since $$(t-r)^{\frac{p(m-1)}{p-1}} \leq t^{\frac{p(m-1)}{p-1}},\;\mbox{for all}\; 0<r<t<R \leq \infty $$ it follows that 
$$0 \le \int_{0}^{t} (t-r)^{\frac{p(m-1)}{p-1}} r^{-\frac{\gamma}{p-1}} \, dr \le \frac{1-p}{\gamma- p+1} t^{- \frac{\gamma-mp+1}{p-1}}. $$
Consequently,
\begin{equation*}
\begin{aligned}
   & 0\le \bigg( \int_{0}^{t} (t-r)^{\frac{(m-1)p}{p-1}} r^{-\frac{\gamma}{p-1}} \, dr\bigg)^{\frac{p-1}{p}} \bigg( \int_{t}^{R} r^{\theta} \, dr\bigg)^{\frac{1}{p^{*}}}\\
   &\le \bigg( \frac{1-p}{\gamma- p+1}  \bigg)^{\frac{p-1}{p}} t^{- \frac{\gamma-mp+1}{p}}  \bigg( \int_{t}^{R} r^{\theta} \, dr\bigg)^{\frac{1}{p^{*}}}\\
&=\left\{\begin{aligned}
&\bigg( \frac{1-p}{\gamma- p+1}  \bigg)^{\frac{p-1}{p}} [-(1+\theta)]^{-\frac{1}{p^{*}}} \Big(1- \Big(\frac{R}{t}\Big)^{\theta+1}\Big)^{\frac{1}{p^{*}}},\;&\mbox{if}&\; 0<R<\infty\\
& \bigg( \frac{1-p}{\gamma- p+1}  \bigg)^{\frac{p-1}{p}} [-(1+\theta)]^{-\frac{1}{p^{*}}},\;&\mbox{if}&\; R=\infty,
\end{aligned}\right.
\end{aligned}
\end{equation*}
since that
$$\dfrac{\theta+1}{p^{*}} = \dfrac{\gamma-mp+1}{p}.$$
Therefore, for every $0<R \leq \infty$ we see
$$\tilde{\mathcal{A}}_{m,1} = \sup_{0<t<R} \bigg( \int_{0}^{t} (t-r)^{\frac{(m-1)p}{p-1}} r^{-\frac{\gamma}{p-1}} \, dr\bigg)^{\frac{p-1}{p}} \bigg( \int_{t}^{R} r^{\theta} \, dr\bigg)^{\frac{1}{p^{*}}} < \infty.$$

\end{proof}
\begin{corollary}\label{D-embeding} Let $p>1$, $\theta, \alpha> -1$, $0<R\le \infty$ and $m\in \mathbb{Z}^{*}_{+}$ such that $\theta\ge \alpha-mp$ and $\alpha-mp+1>0$. Then, there is $C>0$ such that 
\begin{equation}\nonumber
     \left( \int_{0}^{R} |u(r)|^{p^{*}} r^{\theta} \, dr\right)^{\frac{1}{p^{*}}} \leq C \left( \int_{0}^{R} |u^{(m)}(r)|^{p} r^{\alpha} \, dr \right)^{\frac{1}{p}}, \;\; \mbox{with}\;\; p^{*}= \frac{(\theta +1)p}{\alpha -mp +1}
\end{equation}
holds, for any $u\in \mathcal{D}^{m,p}_{R}(\alpha)$. In particular, we have the continuous embedding 
\begin{equation}\label{D-emb}
    \mathcal{D}^{m,p}_{R}(\alpha)\hookrightarrow L^{p^{*}}_{\theta}.
\end{equation} 
\end{corollary}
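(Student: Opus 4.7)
The plan is to obtain Corollary~\ref{D-embeding} as an essentially direct consequence of Proposition~\ref{c0-prop1}(a), followed by an extension by density through the definition of $\mathcal{D}^{m,p}_{R}(\alpha)$.

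First I would verify the hypotheses of Proposition~\ref{c0-prop1}(a) with the choice $\gamma=\alpha$: the assumptions $\theta\ge \alpha-mp$ and $\alpha-mp+1>0$ are precisely what that proposition demands, and in particular $\gamma-mp+1>0\ne 0$, so the proposition applies. Since any $u\in D_{0,R}(\alpha,m,p)$ belongs to $AC^{m-1}_{\mathrm{R}}(0,R)$ by definition, Proposition~\ref{c0-prop1}(a) yields a constant $C>0$, independent of $u$, such that
\begin{equation*}
\left(\int_{0}^{R}|u(r)|^{p^{*}}r^{\theta}\,dr\right)^{\frac{1}{p^{*}}}\le C\left(\int_{0}^{R}|u^{(m)}(r)|^{p}r^{\alpha}\,dr\right)^{\frac{1}{p}},\qquad u\in D_{0,R}(\alpha,m,p).
\end{equation*}

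Next I would extend the inequality to all of $\mathcal{D}^{m,p}_{R}(\alpha)$. Recall that $\mathcal{D}^{m,p}_{R}(\alpha)$ is defined as the completion of $D_{0,R}(\alpha,m,p)$ under the norm $\|u^{(m)}\|_{L^{p}_{\alpha}}$ (equivalently, under $\|\nabla^{m}_{\alpha}u\|_{L^{p}_{\alpha}}$ when that norm is available, by Proposition~\ref{Equi-norms}). Given $u\in\mathcal{D}^{m,p}_{R}(\alpha)$, pick a sequence $(u_{k})\subset D_{0,R}(\alpha,m,p)$ with $u_{k}^{(m)}\to\xi$ in $L^{p}_{\alpha}$ for some representative $\xi$; by the inequality above applied to differences $u_{k}-u_{j}$, the sequence $(u_{k})$ is Cauchy in $L^{p^{*}}_{\theta}$ and hence converges to some $\tilde{u}\in L^{p^{*}}_{\theta}$. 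Passing to the limit in the inequality for $u_{k}$ yields the desired bound on $\tilde{u}$.

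The only point that requires a little care is identifying $\tilde{u}$ with (a representative of) $u$, so that the $L^{p^{*}}_{\theta}$-bound is really a statement about $u$ itself rather than an auxiliary limit. This is standard: the inequality shows that the inclusion $D_{0,R}(\alpha,m,p)\hookrightarrow L^{p^{*}}_{\theta}$ is continuous with respect to the $\|\cdot\|_{\nabla^{m}_{\alpha}}$ norm on the domain, hence extends uniquely to a continuous (in fact injective) linear map from the completion $\mathcal{D}^{m,p}_{R}(\alpha)$ into $L^{p^{*}}_{\theta}$, and this map is by construction identified with the natural embedding. The continuous embedding \eqref{D-emb} then follows immediately from the estimate, completing the argument. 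The main conceptual obstacle has already been handled in Proposition~\ref{c0-prop1}(a); here the only mildly delicate step is ensuring well-definedness of the embedding under the completion procedure, and that is routine.
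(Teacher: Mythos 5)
Your proposal is correct and follows the same route as the paper: the paper's proof is simply ``Follows from Proposition~\ref{c0-prop1}, item $(a)$.'' You have merely spelled out the routine density/completion argument that the paper leaves implicit.
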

\begin{proof}
Follows from Proposition~\ref{c0-prop1}, item $(a)$.
\end{proof}
\begin{corollary}\label{DX} Let $p>1$, $\alpha> -1$, $0<R\le \infty$ and $m\in \mathbb{Z}^{*}_{+}$ such that $\alpha-mp+1>0$. Then for any $0\le \ell\le m$ there is $C>0$ such that 
\begin{equation}\label{transition-jump}
      \begin{aligned}
     \left(\int_{0}^{R} |u^{(\ell)}(r)|^{p} r^{\alpha-(m-\ell)p} \, dr\right)^{\frac{1}{p}} & \le C \left( \int_{0}^{R} |u^{(m)}(r)|^{p} r^{\alpha} \, dr \right)^{\frac{1}{p}},
     \end{aligned}
\end{equation}
holds, for all $u \in AC^{m-1}_{\mathrm{R}}(0, R)$. In particular, we have
\begin{equation}\label{DX-Identity}
    \mathcal{D}^{m,p}_{R}(\alpha)=X^{m,p}_{R}(\beta_0,\cdots, \beta_{m}),
\end{equation}
 for the specific choices $\beta_{\ell}=\alpha-(m-\ell)p$. 
\end{corollary}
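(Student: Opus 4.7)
The plan is to derive \eqref{transition-jump} directly from Proposition~\ref{c0-prop1}(a), applied not to $u$ itself but to its $\ell$-th derivative $u^{(\ell)}$, with parameters tuned so that the resulting critical exponent collapses to $p$. First I would check that if $u \in AC^{m-1}_{\mathrm{R}}(0,R)$, then $u^{(\ell)} \in AC^{(m-\ell)-1}_{\mathrm{R}}(0,R)$: this is immediate because $(u^{(\ell)})^{(j)}(r) = u^{(\ell+j)}(r) \to 0$ as $r \to R$ for every $j = 0,\ldots,m-\ell-1$. Then I would invoke Proposition~\ref{c0-prop1}(a) for $u^{(\ell)}$ with the integer $m-\ell$ in place of $m$, right-hand weight $\gamma = \alpha$, and left-hand weight exponent $\theta = \alpha - (m-\ell)p$.

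The hypotheses needed are clearly satisfied: the inequality $\theta \geq \gamma - (m-\ell)p$ holds with equality, and the Sobolev-type condition $\gamma - (m-\ell)p + 1 = (\alpha - mp + 1) + \ell p > 0$ follows from the standing assumption $\alpha - mp + 1 > 0$. Moreover,
\begin{equation*}
p^{*} \;=\; \frac{(\theta + 1)p}{\alpha - (m-\ell)p + 1} \;=\; p,
\end{equation*}
so the inequality produced by Proposition~\ref{c0-prop1}(a), together with the identity $(u^{(\ell)})^{(m-\ell)} = u^{(m)}$, is exactly \eqref{transition-jump}. The endpoint $\ell = m$ reduces to a trivial equality and requires no separate argument.

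For the identification \eqref{DX-Identity}, I would combine \eqref{transition-jump} with the definition of the weighted Sobolev norm \eqref{c0-norma1} associated with the weights $\beta_\ell = \alpha - (m-\ell)p$. Summing the $p$-th powers of \eqref{transition-jump} over $\ell = 0,\ldots, m$ yields an estimate $\|u\|_{W^{m,p}_R(\beta_0,\ldots,\beta_m)} \leq C \|u^{(m)}\|_{L^p_\alpha}$ for all $u \in D_{0,R}(\alpha,m,p)$, while the reverse bound $\|u^{(m)}\|_{L^p_\alpha} \leq \|u\|_{W^{m,p}_R(\beta_0,\ldots,\beta_m)}$ is automatic since $\beta_m = \alpha$. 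In particular $D_{0,R}(\alpha,m,p) = W_{0,R}(\beta_0,\ldots,\beta_m)$ as sets, and the two norms are equivalent on this common dense subspace, so their completions coincide and \eqref{DX-Identity} follows. There is no real obstacle here; the only substantive step is checking that the parameter choices meet the hypotheses of Proposition~\ref{c0-prop1}(a), which is the verification carried out above.
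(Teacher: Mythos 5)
Your proposal is correct and follows essentially the same route as the paper: apply Proposition~\ref{c0-prop1}(a) to $u^{(\ell)}$ with $m-\ell$ in place of $m$, $\gamma=\alpha$, $\theta=\alpha-(m-\ell)p$ (so that $p^*=p$), then deduce \eqref{DX-Identity} by observing set equality of $D_{0,R}(\alpha)$ and $W_{0,R}(\beta_0,\dots,\beta_m)$ together with equivalence of the two defining norms. No gaps.
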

\begin{proof}
 Of course, we can assume $0\le \ell<m$.  If $u \in AC^{m-1}_{\mathrm{R}}(0, R)$ then $u^{(\ell)} \in AC^{m-\ell-1}_{\mathrm{R}}(0, R)$.
 In addition, we have $\alpha-(m-\ell)p+1\ge \alpha-mp+1>0$. So,  by choosing $\theta=\alpha-(m-\ell)p$ and $\gamma=\alpha$ we can apply the \eqref{c0-eprop1} for the space $AC^{m-\ell-1}_{\mathrm{R}}(0, R)$ to obtain 
      \begin{equation}\nonumber
      \begin{aligned}
     \left( \int_{0}^{R} |u^{(\ell)}(r)|^{p} r^{\alpha-(m-\ell)p} \, dr\right)^{\frac{1}{p}} & \leq C \left( \int_{0}^{R} |(u^{(\ell)})^{(m-\ell)}(r)|^{p} r^{\alpha} \, dr \right)^{\frac{1}{p}}\\
     &=C \left( \int_{0}^{R} |u^{(m)}(r)|^{p} r^{\alpha} \, dr \right)^{\frac{1}{p}}.
     \end{aligned}
\end{equation}
Thus proves \eqref{transition-jump}. Further, for the specific choices of weights $(\beta_0,\cdots, \beta_m)$, with $\beta_{\ell}=\alpha-(m-\ell)p$, \eqref{transition-jump} ensures  
$ D_{0,R}(\alpha)\subset W_{0,R}(\beta_0,\cdots, \beta_m)$ for any $0<R\le \infty.$
Since the contrary inclusion is obvious we get 
$$ D_{0,R}(\alpha)=W_{0,R}(\beta_0,\cdots, \beta_m),\;\; \mbox{for any}\;\; 0<R\le \infty.$$
In addition, from \eqref{transition-jump} we can see that the 
the full norm
\begin{equation}\nonumber
    \|u\|_{W^{m, p}_{R}} = \left( \sum_{\ell=0}^{m} \| u^{(\ell)}\|_{L^{p}_{\beta_{\ell}}}^{p} \right)^{\frac{1}{p}}
\end{equation}
is equivalent to the norm
\begin{equation}\nonumber
  \|u^{(m)}\|_{L^{p}_{\alpha}} = \left(\int_{0}^{R} |u^{(m)}(r)|^{p} r^{\alpha} \, dr \right)^{\frac{1}{p}}.
\end{equation}
 It follows that  for any $p>1$, $m\in \mathbb{Z}^{*}_{+}$ and $\alpha>-1$ such that $\alpha-mp+1>0$ we have
 $$\mathcal{D}^{m,p}_{R}(\alpha)=X^{m,p}_{R}(\beta_0,\cdots, \beta_{m}),\;\; \mbox{for any}\;\; 0<R\le \infty,$$
 with $\beta_{\ell}=\alpha-(m-\ell)p$.
 \end{proof}
\begin{remark} For $0<R<\infty$, the norms \eqref{c0-norma1} and \eqref{norma-diric} are equivalent on $W_{0,R}(\alpha_{0}, \cdots, \alpha_{m})$ for general class of weights  $(\alpha_0,\cdots, \alpha_{m})$ satisfying the transition condition \eqref{trasition-C}, see \cite{JAJ} for more details. Thus, the identity 
    $\mathcal{D}^{m,p}_{R}(\alpha_m)=X^{m,p}_{R}(\alpha_0,\cdots, \alpha_{m})$ can be obtained for more general class of weights. However, Corollary~\ref{DX}  has the advantage of including $R=\infty$.
\end{remark}  
In the next we extend the extension type operator obtained in \cite[Lemma~2.1]{JFO} for higher order derivatives $m\ge 2$.
\begin{lemma}[Extension operator]\label{c0-lema de extensao}
Let $R>0$ and $p\ge 1$ be real numbers and $m \in \mathbb{Z}^{*}_{+}$. For each $2R<L\le \infty$ there exists a linear extension operator $T: W^{m,p}_{R} \to X^{m,p}_{L}$
such that
\begin{itemize}
    \item [$(a)$] $Tu = u$ in $(0,R)$,
    \item [$(b)$] \textrm{supp} $Tu \subseteq [0, 2R)$,
    \item [$(c)$]     $\|Tu\|_{W^{m,p}_{L}} \leq C \|u\|_{W^{m,p}_{R}}$, for some $C = C(m, p, R, \alpha_{0}, \cdots, \alpha_{m}) > 0$.
\end{itemize}
\end{lemma}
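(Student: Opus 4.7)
The plan is to construct $T$ by a Hestenes-type reflection across $r=R$. Fix $m$ distinct parameters $\lambda_1,\dots,\lambda_m\in(0,1/2)$, for instance $\lambda_k=k/(2m)$, so that for every $r\in(R,2R)$ the reflected argument $R-\lambda_k(r-R)$ lies in $(R/2,R)\subset(0,R)$. Since the matrix with entries $(-\lambda_k)^j$, $0\le j\le m-1$, $1\le k\le m$, is a signed Vandermonde matrix and hence nonsingular, the linear system
\begin{equation*}
\sum_{k=1}^m c_k(-\lambda_k)^j=1,\qquad j=0,1,\dots,m-1,
\end{equation*}
admits a unique solution $(c_1,\dots,c_m)$. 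Fix a cut-off $\phi\in C^{\infty}([0,\infty))$ with $\phi\equiv 1$ on $[0,5R/4]$ and $\phi\equiv 0$ on $[7R/4,\infty)$, and set
\begin{equation*}
Tu(r)=\begin{cases} u(r),&0<r\le R,\\ \phi(r)\sum_{k=1}^m c_k\,u\bigl(R-\lambda_k(r-R)\bigr),&R<r<2R,\\ 0,&2R\le r<L. \end{cases}
\end{equation*}
Linearity of $T$ is immediate, as is property $(b)$.

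Next, to secure $Tu\in AC^{m-1}_{\mathrm{R}}(0,L)$, I first argue that the one-sided limits $u^{(j)}(R^{-})$ exist for $j=0,\dots,m-1$: since $u^{(m)}\in L^{p}_{\alpha_m}(0,R)$ and the weight $r^{\alpha_m}$ is bounded below on $[R/2,R]$, H\"older's inequality yields $\int_{R/2}^{R}|u^{(m)}|<\infty$, so $u^{(m-1)}(r)=u^{(m-1)}(R/2)+\int_{R/2}^{r}u^{(m)}$ has a finite limit as $r\to R^{-}$; a backwards induction supplies the remaining limits. Because $\phi\equiv 1$ near $R$, differentiating the reflection term by term gives
\begin{equation*}
\lim_{r\to R^{+}}(Tu)^{(j)}(r)=\Bigl(\sum_{k=1}^{m}c_k(-\lambda_k)^{j}\Bigr)u^{(j)}(R^{-})=u^{(j)}(R^{-}),\qquad j=0,\dots,m-1,
\end{equation*}
so $Tu\in AC^{m-1}_{loc}(0,L)$. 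Since $Tu\equiv 0$ on $[2R,L)$ every $j$-th derivative vanishes in the limit $r\to L$, and property $(a)$ together with the membership in $AC^{m-1}_{\mathrm{R}}(0,L)$ follow.

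For the norm estimate $(c)$, only the contribution on $(R,2R)$ requires work. The Leibniz rule gives
\begin{equation*}
(Tu)^{(j)}(r)=\sum_{i=0}^{j}\binom{j}{i}\phi^{(i)}(r)\sum_{k=1}^{m}c_k(-\lambda_k)^{j-i}u^{(j-i)}\!\bigl(R-\lambda_k(r-R)\bigr),
\end{equation*}
and the substitution $s=R-\lambda_k(r-R)$ maps $(R,2R)$ onto a subinterval of $(R/2,R)$ with constant Jacobian $1/\lambda_k$. Since both $r$ and $s$ lie in a compact subinterval of $(0,\infty)$ bounded away from the origin, there is a constant $C=C(R,\alpha_0,\dots,\alpha_m)$ such that $r^{\alpha_j}\le C\,s^{\alpha_\ell}$ for every relevant $\ell$, whence
\begin{equation*}
\int_R^{2R}|(Tu)^{(j)}|^{p}r^{\alpha_j}\,dr\le C\sum_{\ell=0}^{j}\|u^{(\ell)}\|_{L^{p}_{\alpha_\ell}(0,R)}^{p}\le C\|u\|_{W^{m,p}_R}^{p}.
\end{equation*}
Summing over $j$ and adding the trivial contribution from $(0,R)$ establishes $(c)$. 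I expect the principal obstacle to be design-level rather than analytic: one must pick the $\lambda_k$'s so as to satisfy simultaneously distinctness (to make the Vandermonde system uniquely solvable), the constraint $\lambda_k<1$ (to keep reflected points inside $(0,R)$), and uniform separation from $0$ (to turn the Jacobian into a harmless constant). Once that balance is struck, every remaining step reduces to a change of variables on a bounded interval bounded away from the origin.
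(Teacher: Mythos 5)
Your proof is correct, and it takes a genuinely different route from the paper's. The paper fixes a cutoff $\eta$ equal to $1$ near the origin and $0$ near $R$, writes $u = \eta u + (1-\eta)u$, keeps the first piece, and extends the second across $r=R$ by the single even reflection $r \mapsto 2R-r$ (with a further cutoff beyond $7R/4$). You instead use a Hestenes-type multi-point reflection, choosing distinct $\lambda_k\in(0,1/2)$ and Vandermonde coefficients $c_k$ so that $\sum_k c_k(-\lambda_k)^j=1$ for $j=0,\dots,m-1$; this forces all derivatives up to order $m-1$ to match across $r=R$ automatically. This is in fact the cleaner device when $m\ge 2$: the simple reflection $r\mapsto 2R-r$ flips the sign of odd-order derivatives at $R$, giving $(\cdot)^{(l)}(R^+)=(-1)^l u^{(l)}(R^-)$ while $(\cdot)^{(l)}(R^-)=u^{(l)}(R^-)$, and the cutoff does not remove this mismatch because $1-\eta\equiv1$ near $R$; your Vandermonde choice sidesteps the issue entirely. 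The remaining steps in your argument — existence of the one-sided limits $u^{(j)}(R^-)$ via local integrability of $u^{(m)}$ on $(R/2,R)$, membership of $Tu$ in $AC^{m-1}_{\mathrm{R}}(0,L)$ (gluing the two AC pieces at $R$ where all derivatives through order $m-1$ agree, and noting $Tu\equiv0$ near $L$), and the norm bound via the affine change of variables $s=R-\lambda_k(r-R)$ together with the observation that both $r$ and $s$ stay in compact subsets of $(0,\infty)$ so the weights $r^{\alpha_j}$, $s^{\alpha_\ell}$ are comparable — are all correct and complete.
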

\begin{proof}
Let $\eta \in C^{\infty}_{c}[0, \infty)$ be an auxiliary function satisfying
\begin{equation*}
\eta(r) = \left\{\begin{aligned}
&1, \;\; &\mbox{if}&\;\; 0 \leq r \leq \frac{R}{4}\\
& 0 \leq \eta(r) \leq 1, \,\,&\mbox{if}&\,\, \frac{R}{4} \leq r \leq \frac{3R}{4}\\
&0, \;\; &\mbox{if}&\;\; r> \frac{3R}{4}.
\end{aligned}\right.
\end{equation*}
For $u \in W^{m,p}_{R}$, we define $v_{1}, v_{2} : [0, L]  \to \mathbb{R}$ by
\begin{equation*}
    v_{1}(r) = \left\{\begin{aligned}
     &  \eta(r) u(r), \;\; &\mbox{if}&\;\; 0 \leq r \leq R\\
& 0, \;\; &\mbox{if}&\;\; R < r \leq L
    \end{aligned}\right.
\end{equation*}
and
\begin{equation*}
    v_{2}(r) =\left\{\begin{aligned}
&\big(1-\eta(r) \big) u(r),\;\; &\mbox{if}&\;\; 0\leq r \leq R\\
&\big(1- \eta(2R-r)\big)u(2R-r),\;\; &\mbox{if}&\;\; R < r \leq \frac{7R}{4}\\
&0, \;\; &\mbox{if}&\;\;\frac{7R}{4} \leq r \leq L.
\end{aligned}\right.
\end{equation*}
For any $l= 0, 1, \cdots, m$ holds 
\begin{equation*}
    v^{(l)}_{1}(r) = \left\{\begin{aligned}
&\sum^{l}_{j=0} {l\choose j}\eta^{(j)}(r) u^{(l-j)}(r), \;\; &\mbox{if}&\;\; 0 < r \leq R,\\
&0, \;\;&\mbox{if}&\;\; R < r < L
\end{aligned}\right.
\end{equation*}
and 
\begin{equation*}
    v^{(l)}_{2}(r) =\left\{\begin{aligned}
& \big(1-\eta(r) \big) u^{(l)}(r) - \displaystyle\sum^{l}_{j=1} {l\choose j} \eta^{(j)}(r) u^{(l-j)}(r),\;\; &\mbox{if} &\;\; 0< r \leq R\\
& \left. \begin{aligned}
&\big(1-\eta(2R - r) \big) u^{(l)}(2R-r) \\
&- \displaystyle\sum^{l}_{j=1} {l\choose j} \eta^{(j)}(2R-r) u^{(l-j)}(2R-r)\end{aligned}\right\}, \;\; &\mbox{if}&\;\; R < r \leq\frac{7R}{4},\\
& 0,\;\; &\mbox{if}&\;\;\frac{7R}{4} \leq r < L.
\end{aligned}\right.
\end{equation*}
Hence, $v_{1}^{(m-1)}$, $v_{2}^{(m-1)} \in AC_{loc}[0, L]$, with $v_{1}^{(l)}(L) = v_{2}^{(l)}(L) = 0$, for $l= 0,1, \cdots, m-1$. Then, we define $T: W^{m,p}_{R} \to X^{m,p}_{L}$ by setting $$Tu = v_{1} + v_{2}.$$ It is easy to verify that $T$ is a linear operator with
$Tu = u$ on $(0,R)$ and  $\textrm{supp} \,Tu \subseteq [0, 2R)$ for any $u\in  W^{m,p}_{R}$. We proceed to show that item $(c)$ holds. Of course, 
\begin{equation}\label{est0}
    \|Tu\|_{W^{m,p}_{L}} \leq \|v_{1}\|_{W^{m,p}_{L}} + \|v_{2}\|_{W^{m,p}_{L}}.
\end{equation}
We claim that there exists $C=C(\alpha_0, \cdots, \alpha_m, m, p, R)>0$ such that
\begin{equation}\label{est+}
    \max\{ \|v_{1}\|_{W^{m,p}_{L}},  \|v_{1}\|_{W^{m,p}_{L}}\} \leq C \|u\|_{W^{m,p}_{R}}.
\end{equation}
In fact, for any $l = 0, 1, \cdots, m$, we have  
$$|v_{1}^{(l)}|^{p} \leq 2^{lp} \displaystyle\sum^{l}_{j=0} {l\choose j}|\eta^{(j)}(r)|^{p} |u^{(l-j)}(r)|^{p} \quad \text{on} \quad (0,R).$$ 
Then, 
\begin{eqnarray}\label{est1}
\|v_{1}^{(l)}\|^{p}_{L^{p}_{\alpha_{l}}} &\leq &   2^{lp} \int_{0}^{R} \bigg[ \displaystyle\sum^{l}_{j=0} {l\choose j}  |\eta^{(j)}(r)|^{p} |u^{(l-j)}(r)|^{p} r^{\alpha_{l}} \bigg] \, dr  \nonumber \\ 
&=& 2^{lp}  \bigg[ \displaystyle\sum^{l}_{j=1} {l\choose j}  \int_{\frac{R}{4}}^{R} r^{\alpha_{l}- \alpha_{l-j}} |\eta^{(j)}(r)|^{p} |u^{(l-j)}(r)|^{p} r^{\alpha_{l-j}} \, dr  \nonumber \\ 
&+&   \int_{0}^{R}| \eta(r)|^{p} |u^{(l)}(r)|^{p} r^{\alpha_{l}} \, dr \bigg].
\end{eqnarray}
For any $j = 1, 2, \cdots, l$, application $r \mapsto r^{\alpha_{l}- \alpha_{l-j}} |\eta^{(j)}(r)|^{p}$ is bounded in
 $[R/4, R]$. Therefore, combining this fact with
 \eqref{est1}, we have
\begin{equation}\label{est2}
\|v_{1}^{(l)}\|^{p}_{L^{p}_{\alpha_{l}}} \leq C_{l} \| u \|^{p}_{W^{m,p}_{R}},    
\end{equation}
for some constant $C_{l}= C_{l}(l, p, R, \alpha_0, \cdots, \alpha_l) >0$. Using  \eqref{est2} and we choose $C =\max\{C_0,\cdots, C_m\}$, we have
\begin{equation}\label{est3}
\|v_{1}\|^{p}_{W^{m,p}_{L}} \leq C \| u \|^{p}_{W^{m,p}_{R}}.    
\end{equation}
Similarly, for any $l=0, 1, \cdots, m$, holds 
$$|v_{2}^{(l)}|^{p} \leq 2^{lp} \bigg[ |1-\eta(r)| |u^{(l)}(r)| + \sum^{l}_{j=1} {l\choose j} |\eta^{(j)}(r)| |u^{(l-j)}(r)| \bigg] \quad \text{on} \quad (0,R)$$
and
\begin{equation}
\label{estimativa1}
\begin{aligned}
    |v_{2}^{(l)}|^{p} &\le 2^{lp} \bigg[ |1-\eta(2R-r)| |u^{(l)}(2R-r)| \\
&+ \sum^{l}_{j=1} {l\choose j} |\eta^{(j)}(2R-r)| |u^{(l-j)}(2R-r)| \bigg] \quad \text{on} \;\; (R, 7R/4).
\end{aligned}
\end{equation}
Moreover, we note that the applications 
$$r \mapsto (2R-r)^{\alpha_{l}} r^{-\alpha_{l}} \quad \text{and} \quad r \mapsto |\eta^{(j)}(r)|^{p} (2R-r)^{\alpha_{l}} r^{-\alpha_{l-j}}$$
are bounded in $[R/4, R]$, for $j = 1, 2, \cdots, l$.
Therefore, combining this fact with
 \eqref{estimativa1}, we have
\begin{equation}\label{est4}
\|v_{2}\|^{p}_{W^{m,p}_{L}} \leq C \| u \|^{p}_{W^{m,p}_{R}}.    
\end{equation}
Finally, combining \eqref{est3} and \eqref{est4}, we have \eqref{est+}.  
By \eqref{est+} and \eqref{est0}, we complete the proof of item (c).
\end{proof}

By combining the Lemma \ref{c0-lema de extensao} with \cite[Lemma 2.2]{JAJ}, we are able to give the behavior of any $u \in \mathcal{D}^{m,p}_{R}(\alpha)$ with $0<R\le \infty$ close to the origin.
\begin{lemma}\label{c0-corolario4}
Let $u \in \mathcal{D}^{m,p}_{R}(\alpha)$ with $0<R\le \infty$ and $\alpha-mp+1>0$. For
 $\theta>0$ such that $p \theta  \geq \alpha - p +1$, we have
$$\lim_{r \to 0} r^{\theta} u^{(j)}(r)= 0, \quad \forall \, j= 0,1, \cdots, m-1.$$ 
\end{lemma}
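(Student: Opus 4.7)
The plan is to reduce to a statement on a bounded interval and invoke \cite[Lemma~2.2]{JAJ}, using the extension machinery of Lemma~\ref{c0-lema de extensao}. First I would use Corollary~\ref{DX} to identify $\mathcal{D}^{m,p}_{R}(\alpha)$ with the full weighted space $X^{m,p}_{R}(\beta_{0},\dots,\beta_{m})$ where $\beta_{\ell}=\alpha-(m-\ell)p$. In particular, every $u\in \mathcal{D}^{m,p}_{R}(\alpha)$ automatically satisfies $u^{(j)}\in L^{p}_{\beta_{j}}(0,R)$ for all $j=0,1,\dots,m$, so truncating to any subinterval $(0,R_{0})$ produces an element of $W^{m,p}_{R_{0}}(\beta_{0},\dots,\beta_{m})$.

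Next I would pick some $0<R_{0}<R$ with $2R_{0}<R$ (if $R=\infty$, any $R_{0}$ works; if $R<\infty$, take $R_{0}<R/2$) and apply the extension operator $T$ of Lemma~\ref{c0-lema de extensao} with weights $\alpha_{\ell}=\beta_{\ell}$ to the restriction $u|_{(0,R_{0})}$. This yields a function $Tu\in X^{m,p}_{L}(\beta_{0},\dots,\beta_{m})$ for some finite $L>2R_{0}$, with $Tu\equiv u$ on $(0,R_{0})$ and $Tu$ compactly supported in $[0,2R_{0})\subset[0,L)$. So the question of the behavior of $u^{(j)}$ near $r=0$ is identical to that of $(Tu)^{(j)}$ near $r=0$, and $Tu$ now lives in a genuine bounded-interval weighted Sobolev space of the type treated in \cite{JAJ}.

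Finally I would cite \cite[Lemma~2.2]{JAJ}: for $Tu\in X^{m,p}_{L}(\beta_{0},\dots,\beta_{m})$ with $L<\infty$, any exponent $\theta>0$ satisfying $p\theta\ge \beta_{m-1}+1$ forces $r^{\theta}(Tu)^{(j)}(r)\to 0$ as $r\to 0$ for every $j=0,1,\dots,m-1$. Since $\beta_{m-1}=\alpha-p$, the condition $p\theta\ge \beta_{m-1}+1$ is exactly our hypothesis $p\theta\ge \alpha-p+1$, and the Sobolev-type condition $\alpha-mp+1>0$ is available to ensure Corollary~\ref{DX} applies and, if needed, to satisfy any ancillary assumption of \cite[Lemma~2.2]{JAJ}. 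Because $Tu=u$ on $(0,R_{0})$, the decay of $r^{\theta}(Tu)^{(j)}(r)$ is precisely the decay of $r^{\theta}u^{(j)}(r)$, concluding the proof.

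The only delicate point I anticipate is checking that the hypothesis $p\theta\ge \alpha-p+1$ of the present lemma matches, without slack, the hypothesis formulated in \cite[Lemma~2.2]{JAJ} for the specific weight tuple $(\beta_{0},\dots,\beta_{m})=(\alpha-mp,\dots,\alpha-p,\alpha)$; everything else (the extension step, the localization near $r=0$, and the identification $\mathcal{D}^{m,p}_{R}(\alpha)=X^{m,p}_{R}(\beta_{0},\dots,\beta_{m})$) is already packaged in the results above.
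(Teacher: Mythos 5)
Your proposal is correct and follows essentially the same route as the paper: identify $\mathcal{D}^{m,p}_{R}(\alpha)$ with $X^{m,p}_{R}(\beta_0,\dots,\beta_m)$ via Corollary~\ref{DX}, restrict to a bounded subinterval, extend by Lemma~\ref{c0-lema de extensao} to land in a bounded-interval space $X^{m,p}_L(\beta_0,\dots,\beta_m)$, and invoke \cite[Lemma~2.2]{JAJ}. The only (cosmetic) difference is that the paper applies \cite[Lemma~2.2]{JAJ} directly when $0<R<\infty$, using the extension step solely for $R=\infty$, whereas you run the extension argument uniformly in both cases.
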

\begin{proof} From Corollary~\ref{DX}, for $\beta_{\ell}=\alpha-(m-\ell)p$ we have 
$$\mathcal{D}^{m,p}_{R}(\alpha)=X^{m,p}_{R}(\beta_0,\cdots, \beta_{m}),\;\mbox{for}\; 0<R\le \infty. $$
 Then, if  $0< R< \infty$,  the result follows directly from \cite[Lemma 2.2]{JAJ}. Now, if $u \in \mathcal{D}^{m,p}_{\infty}(\alpha)=X^{m,p}_{\infty}(\beta_0,\cdots, \beta_{m})$ we necessarily have that $u \in W^{m,p}_{S}(\beta_0,\cdots, \beta_{m})$, for any $0<S<\infty$. By Lemma \ref{c0-lema de extensao}, there is a linear extension operator $$T: W^{m,p}_{S}(\beta_0,\cdots, \beta_{m}) \to X^{m,p}_{L}(\beta_0,\cdots, \beta_{m})$$ such that
 $Tu = u$ on $(0,S)$, provided that $2S<L<\infty$. 
Since $Tu \in X^{m,p}_{L}(\beta_0,\cdots, \beta_{m})$ we are in a position to apply \cite[Lemma 2.2]{JAJ} to obtain
$$\lim_{r \to 0} r^{\theta} u^{(j)}(r)=\lim_{r \to 0} r^{\theta} (Tu)^{(j)}(r)= 0, \quad \text{for}\,\,  j= 0,1, \cdots, m-1.$$
\end{proof}
Lemma~\ref{c0-lema3} and Lemma~\ref{c0-lema4} below have been proven in \cite[Lemma~4.2, Lemma~4.3]{JAJ} for $0<R<\infty$ and the argument from there works for $R=\infty$. So,  we omit the proof here.
\begin{lemma}\cite[Lemma~4.2]{JAJ}\label{c0-lema3}
Let $\alpha \in \mathbb{R}_{+}^{*}$, $0< R\le \infty$ and $k \in \mathbb{Z}^{*}_{+}$. There are constants $c_{1}, \cdots, c_{2k-1}$  and $d_{1}, \cdots, d_{2k}$ such that
\begin{equation}\label{c0-e44}
    \Delta^{k}_{\alpha} u = u^{(2k)} + \sum_{i=1}^{2k-1} c_{i} \dfrac{u^{(2k-i)}}{r^{i}},\;\text{for all}\; u \in AC^{2k-1}_{loc}(0,R)
\end{equation}
and 
\begin{equation}\label{c0-e45}
    [\Delta^{k}_{\alpha} u]' = u^{(2k+1)} + \sum_{i=1}^{2k} d_{i} \dfrac{u^{(2k+1-i)}}{r^{i}}, \;\text{for all}\;u \in AC^{2k}_{loc}(0,R).
\end{equation}
\end{lemma}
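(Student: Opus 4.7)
My plan is to prove both identities \eqref{c0-e44} and \eqref{c0-e45} simultaneously by induction on $k \in \mathbb{Z}^{*}_{+}$, exploiting the elementary differentiation rule
\[
\frac{d}{dr}\left(\frac{u^{(j)}}{r^{i}}\right) = \frac{u^{(j+1)}}{r^{i}} - i\,\frac{u^{(j)}}{r^{i+1}},
\]
which preserves the shape $r^{-i}\,u^{(j)}$ while shifting indices by one. Because this is the only operation I ever perform on the terms appearing in \eqref{c0-e44}--\eqref{c0-e45}, the induction automatically produces expressions of the same form.

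For the base case $k=1$, the definition \eqref{laplace-geral} gives $\Delta_{\alpha} u = u'' + \frac{\alpha}{r}u'$, which is exactly \eqref{c0-e44} with $c_{1}=\alpha$. Differentiating and applying the rule above to the second term yields $[\Delta_{\alpha} u]' = u''' + \frac{\alpha}{r}u'' - \frac{\alpha}{r^{2}}u'$, matching \eqref{c0-e45} with $d_{1}=\alpha$ and $d_{2}=-\alpha$.

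For the inductive step, assume \eqref{c0-e44} and \eqref{c0-e45} hold at level $k$. Since
\[
\Delta^{k+1}_{\alpha} u = \Delta_{\alpha}\bigl(\Delta^{k}_{\alpha} u\bigr) = \bigl[\Delta^{k}_{\alpha} u\bigr]'' + \frac{\alpha}{r}\bigl[\Delta^{k}_{\alpha} u\bigr]',
\]
I would differentiate \eqref{c0-e45} term-by-term, using the rule displayed above, to obtain $[\Delta^{k}_{\alpha} u]''$, then multiply the inductive identity \eqref{c0-e45} itself by $\alpha/r$, and finally add. Every term produced has the shape $c\,u^{(2k+2-i)}/r^{i}$ with $i \in \{0,1,\ldots,2k+1\}$; collecting coefficients and observing that the $u^{(2k+2)}$ term has coefficient exactly $1$ gives \eqref{c0-e44} at level $k+1$. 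Differentiating this new identity once more, again by the displayed rule, yields \eqref{c0-e45} at level $k+1$.

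The only delicate point is index bookkeeping: I must confirm that the largest value of $i$ appearing in $\Delta^{k+1}_{\alpha} u$ is $2(k+1)-1=2k+1$ (so no $r^{-(2k+2)}$ terms survive the cancellation between the $u^{(2k+1-i)}/r^{i}$ differentiation and the $\alpha/r$ multiplication), and that the coefficient of $u^{(2k+2)}$ remains $1$. Once these ranges are verified, the constants $c_{i}, d_{i}$ at level $k+1$ are determined by a purely combinatorial regrouping of terms. Since the statement only asserts the existence of such constants and no explicit formula is required downstream, no further computation is needed.
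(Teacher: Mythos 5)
Your induction argument is correct, and since the paper simply cites \cite[Lemma~4.2]{JAJ} without reproducing a proof, your write-up fills a gap the paper leaves to the reference; the approach you take --- simultaneous induction on $k$, alternating between \eqref{c0-e44} and \eqref{c0-e45} via $\Delta^{k+1}_{\alpha}u = [\Delta^{k}_{\alpha}u]'' + \frac{\alpha}{r}[\Delta^{k}_{\alpha}u]'$ --- is the natural one and is what one expects underlies the cited lemma. One small inaccuracy in your bookkeeping remark: no cancellation is actually required to keep the index range at $i\le 2k+1$. Differentiating a term $d_{i}u^{(2k+1-i)}/r^{i}$ with $i\le 2k$ produces powers of $r^{-1}$ at most $r^{-(2k+1)}$, and multiplying the sum in \eqref{c0-e45} by $\alpha/r$ likewise tops out at $r^{-(2k+1)}$; so $r^{-(2k+2)}$ terms never arise in the first place rather than arising and cancelling. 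The leading coefficient of $u^{(2k+2)}$ is $1$ because only the undifferentiated first term of $[\Delta^{k}_{\alpha}u]''$ contributes to it, exactly as you say.
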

\begin{lemma}\cite[Lemma~4.3]{JAJ}\label{c0-lema4}
Let $\alpha \in \mathbb{R}_{+}^{*}$ and $0< R\le \infty$. There are constants $c_{1}, \cdots, c_{2k-1}$ and $d_{1}, \cdots, d_{2k}$  such that
\begin{equation}\label{c0-e49}
    \Delta^{k}_{\alpha} u = u^{(2k)} + c_{1} \dfrac{(\Delta_{\alpha}^{k-1} u)'}{r} + c_{2} \dfrac{\Delta_{\alpha}^{k-1} u}{r^{2}} + \cdots + c_{2k-1} \dfrac{(\Delta_{\alpha}^{0} u)'}{r^{2k-1}},
\end{equation}
for all $u \in AC^{2k-1}_{loc}(0,R)$ with $k \in \mathbb{Z}^{*}_{+}$, and
\begin{equation}\label{c0-e50}
    [\Delta^{k}_{\alpha} u]' = u^{(2k+1)} + d_{1} \dfrac{\Delta_{\alpha}^{k} u}{r} + d_{2} \dfrac{(\Delta_{\alpha}^{k-1} u)'}{r^{2}} + d_{3} \dfrac{\Delta_{\alpha}^{k-1} u }{r^{3}} +\cdots + d_{2k} \dfrac{(\Delta_{\alpha}^{0} u)'}{r^{2k}}, 
\end{equation}
for all $u \in AC^{2k}_{loc}(0, R)$ with $k \in \mathbb{Z}_{+}$.
\end{lemma}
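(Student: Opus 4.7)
The plan is to prove both identities \eqref{c0-e49} and \eqref{c0-e50} simultaneously by induction on $k$, relying on two elementary manipulations that follow immediately from \eqref{laplace-geral}. First, for any sufficiently regular $v$,
\begin{equation*}
\Delta_\alpha v \;=\; v'' \,+\, \frac{\alpha}{r}\,v',
\end{equation*}
which lets us peel off one more application of $\Delta_\alpha$ in terms of $v',v''$. Second, rearranging this same identity gives the \emph{second-derivative elimination rule}
\begin{equation*}
v'' \;=\; \Delta_\alpha v \,-\, \frac{\alpha}{r}\,v',
\end{equation*}
which will be used to recycle pure second derivatives $(\Delta_\alpha^{j} u)''$ that arise during differentiation back into the admissible templates $\Delta_\alpha^{j+1} u / r^{s}$ and $(\Delta_\alpha^{j} u)'/r^{s+1}$.

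The base case $k=1$ is immediate: $\Delta_\alpha u = u^{(2)} + \alpha (\Delta_\alpha^{0} u)'/r$, so $c_1=\alpha$; differentiating and applying the elimination rule to $u''$ gives $[\Delta_\alpha u]' = u^{(3)} + \alpha\,\Delta_\alpha u/r - (\alpha^2+\alpha)(\Delta_\alpha^{0} u)'/r^2$, so $d_1=\alpha$ and $d_2=-(\alpha^2+\alpha)$. For the inductive step, assume both \eqref{c0-e49} and \eqref{c0-e50} hold at level $k$. To establish \eqref{c0-e49} at level $k+1$, write
\begin{equation*}
\Delta_\alpha^{k+1} u \;=\; \Delta_\alpha\!\big(\Delta_\alpha^{k} u\big) \;=\; \big(\Delta_\alpha^{k} u\big)'' \,+\, \frac{\alpha}{r}\big(\Delta_\alpha^{k} u\big)'.
\end{equation*}
The second summand is already the leading off-diagonal term $\alpha(\Delta_\alpha^{k} u)'/r$ of the $(k+1)$-st formula. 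For the first summand, differentiate the induction hypothesis \eqref{c0-e50} for $[\Delta_\alpha^{k} u]'$: the derivative of $u^{(2k+1)}$ supplies $u^{(2k+2)}$, while each inner term $d_i\,(\text{factor})/r^i$ produces, after the product rule, either (i) a term of the admissible shape $(\Delta_\alpha^{j} u)/r^{t}$ or $(\Delta_\alpha^{j} u)'/r^{t}$, or (ii) a term of the form $(\Delta_\alpha^{j} u)''/r^{t}$ with $j\le k-1$; the latter are eliminated by the rule above, each splitting into two admissible pieces with indices that remain within the required range. Collecting like terms produces \eqref{c0-e49} at level $k+1$ with constants determined by an explicit linear recurrence in the $d_i$ from level $k$ (and in $\alpha$).

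The identity \eqref{c0-e50} at level $k+1$ is then obtained by differentiating \eqref{c0-e49} at level $k+1$ and applying exactly the same clean-up: every $u^{(2k+2)}$ becomes $u^{(2k+3)}$, every $(\Delta_\alpha^{j} u)'/r^s$ yields an admissible piece directly, and every $(\Delta_\alpha^{j} u)/r^s$ contributes one admissible piece plus one of type $(\Delta_\alpha^{j} u)'/r^{s+1}$, while the single occurrence of $(\Delta_\alpha^{k} u)''/r$ (coming from differentiating $\alpha(\Delta_\alpha^{k} u)'/r$) is removed via $(\Delta_\alpha^{k} u)'' = \Delta_\alpha^{k+1} u - \alpha (\Delta_\alpha^{k} u)'/r$, producing the $d_1\,\Delta_\alpha^{k+1} u/r$ leading term. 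This closes the induction.

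The main obstacle is purely bookkeeping: each differentiation shifts powers of $r$ by one and splits one term into two or three, so one must verify that the template of allowed summands $\{(\Delta_\alpha^{j} u)/r^{2(k+1-j)},\,(\Delta_\alpha^{j} u)'/r^{2(k+1-j)-1}\}_{j=0}^{k}$ is stable under the operations of differentiation-plus-elimination. No analytic difficulty appears, and since both formulas are pointwise algebraic identities valid wherever $u$ has enough classical derivatives, the value of $R$ plays no role; in particular the argument carries over unchanged from $0<R<\infty$ (as in \cite{JAJ}) to $R=\infty$.
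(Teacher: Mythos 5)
The paper cites \cite[Lemma~4.3]{JAJ} and explicitly omits the proof, noting only that the argument for bounded $R$ carries over to $R=\infty$; so your blind proof supplies a self-contained argument rather than duplicating one. Your nested induction is sound: you establish \eqref{c0-e49} at level $k+1$ by differentiating the level-$k$ instance of \eqref{c0-e50} and recycling every $(\Delta_\alpha^j u)''$ via the elimination rule $v''=\Delta_\alpha v-(\alpha/r)v'$, and then obtain \eqref{c0-e50} at level $k+1$ from the freshly established \eqref{c0-e49} at the same level; the regularity bookkeeping ($u\in AC^{2k+1}_{loc}$ versus $AC^{2k}_{loc}$) is consistent throughout, and the base case computation (including $d_1=\alpha$, $d_2=-(\alpha^2+\alpha)$) is accurate. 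One minor caveat: the template you assert is preserved, $\{(\Delta_\alpha^{j} u)/r^{2(k+1-j)},\ (\Delta_\alpha^{j} u)'/r^{2(k+1-j)-1}\}_{j=0}^{k}$, lists a $j=0$ no-derivative summand $u/r^{2(k+1)}$ which is absent from \eqref{c0-e49} (the $j=0$ slot contributes only the derivative term $(\Delta_\alpha^0 u)'/r^{2k+1}$). Fortunately, differentiation plus elimination can never manufacture a pure $u/r^s$ term, since every input summand already carries at least one derivative or at least one power of $\Delta_\alpha$, so no hidden cancellation is required and the argument goes through; still, a tighter template statement would make the stability check cleaner. Your closing remark that both identities are pointwise algebraic, hence insensitive to $R$, is exactly the observation the paper invokes to justify quoting the bounded-$R$ result for $R=\infty$.
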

\begin{corollary}\label{Navier-boundary}
For $0<R\le \infty$,  $m\in\mathbb{Z}^{*}_{+}$ and $\alpha>0$ we have
\begin{equation}\label{c0-e56}
   \lim_{r \to R} (\Delta^{l}_{\alpha} u)(r) = 0\;\;\mbox{if} \;\; 0\le l<m/2
\end{equation}
for any $u\in AC^{m-1}_{\mathrm{R}}(0,R)$.
 \end{corollary}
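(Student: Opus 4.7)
The plan is to reduce the claim to the defining boundary vanishing condition of $AC^{m-1}_{\mathrm{R}}(0,R)$ via the expansion of $\Delta^{l}_{\alpha}$ provided by Lemma~\ref{c0-lema3}. The case $l=0$ is immediate: $\Delta^{0}_{\alpha}u=u$ and by definition of $AC^{m-1}_{\mathrm{R}}(0,R)$ (note that $l<m/2$ forces $m\geq 1$, so $u^{(0)}=u$ belongs to the list of derivatives vanishing at $R$).

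For $1\leq l<m/2$, $u\in AC^{m-1}_{\mathrm{R}}(0,R)\subset AC^{2l-1}_{loc}(0,R)$, hence Lemma~\ref{c0-lema3} applies with $k=l$ and yields constants $c_{1},\dots,c_{2l-1}$ such that
\begin{equation*}
\Delta^{l}_{\alpha}u(r)=u^{(2l)}(r)+\sum_{i=1}^{2l-1}c_{i}\,\frac{u^{(2l-i)}(r)}{r^{i}},\qquad r\in(0,R).
\end{equation*}
The key observation is that $l<m/2$ gives $2l\leq m-1$, so every derivative order appearing on the right lies in $\{1,2,\dots,m-1\}$ or equals $2l\leq m-1$. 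Therefore the defining property of $AC^{m-1}_{\mathrm{R}}(0,R)$ forces $\lim_{r\to R}u^{(j)}(r)=0$ for $j=0,1,\dots,m-1$, which covers every $u^{(2l-i)}$ and $u^{(2l)}$ in the formula.

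It remains to control the factors $r^{-i}$. For fixed $i\geq 1$, the function $r\mapsto r^{-i}$ is bounded on $[1,R)$ when $R<\infty$ and on $[1,\infty)$ when $R=\infty$ (in the latter case it even tends to $0$). Since we are taking the limit as $r\to R$ and $R>0$ (and we only care about the behaviour for large $r$), each term $c_{i}u^{(2l-i)}(r)/r^{i}$ is a product of a factor tending to $0$ and a bounded factor, hence tends to $0$. Summing finitely many such terms and adding $u^{(2l)}(r)\to 0$ gives $\lim_{r\to R}(\Delta^{l}_{\alpha}u)(r)=0$, as desired. No real obstacle arises; the only point that deserves a line of comment is that $l<m/2$ is exactly what forces $2l\leq m-1$ so that all derivatives appearing in the Lemma~\ref{c0-lema3} expansion are controlled by the hypothesis $u\in AC^{m-1}_{\mathrm{R}}(0,R)$.
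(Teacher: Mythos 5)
Your proof is correct and follows essentially the same route as the paper: invoke the expansion from Lemma~\ref{c0-lema3} with $k=l$, observe that $l<m/2$ forces $2l\le m-1$ so that every derivative $u^{(2l-i)}$ and $u^{(2l)}$ vanishes as $r\to R$ by definition of $AC^{m-1}_{\mathrm{R}}(0,R)$, and conclude. Your explicit remark on the boundedness of the $r^{-i}$ factors near $R$ (for both $R<\infty$ and $R=\infty$) is a small detail the paper leaves implicit; it is a welcome clarification rather than a deviation.
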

 \begin{proof} Of course the result holds for $l=0$. Let $1\le l<m/2$ be an integer number. If $m=2k$ is even then $u\in AC^{m-1}_{\mathrm{R}}(0,R)$ it means that $u\in AC^{2k-1}_{loc}(0,R)$ and $\lim_{r \to R} u^{(j)}(r) = 0$ for all $0\le j\le 2k-1$. Hence, $u\in AC^{2l-1}_{loc}(0,R)$ and $\lim_{r \to R} u^{(j)}(r) = 0$ for all $0\le j\le 2l\le 2k-1$. So, the expansion \eqref{c0-e44} yields \eqref{c0-e56}. Analogously, for $m=2k+1$ odd, $u\in AC^{m-1}_{\mathrm{R}}(0,R)$ it means that $u\in AC^{2k}_{loc}(0,R)$ and $\lim_{r \to R} u^{(j)}(r) = 0$ for all $0\le j\le 2k$ which implies  $u\in AC^{2l-1}_{loc}(0,R)$ and $\lim_{r \to R} u^{(j)}(r) = 0$ for all $0\le j\le 2l\le 2k$ and the result follows from \eqref{c0-e44} again.
 \end{proof}

\begin{corollary} Let $u\in D_{0,R}(\alpha,m,p)$, $p\ge 2$, $0<R\le \infty$ and $\alpha-mp+1>0$. Then
\begin{equation}\label{c0-e57}
    \lim_{r \to 0} r^{\alpha} (\Delta_{\alpha}^{l} u)(r) = 0\,\,\, \text{and}\,\,\, \lim_{r \to 0} r^{\alpha}(\Delta_{\alpha}^{l} u)'(r) = 0,
\end{equation}
for all integer number  $0\le l<m/2$. 
\end{corollary}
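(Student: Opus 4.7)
The plan is to combine the radial expansions of Lemma~\ref{c0-lema3} with the pointwise decay supplied by Lemma~\ref{c0-corolario4}. Multiplying the two identities of Lemma~\ref{c0-lema3} through by $r^{\alpha}$,
\begin{align*}
r^{\alpha}\Delta_{\alpha}^{l}u &= r^{\alpha}u^{(2l)} + \sum_{i=1}^{2l-1} c_{i}\,r^{\alpha-i}u^{(2l-i)},\\
r^{\alpha}[\Delta_{\alpha}^{l}u]' &= r^{\alpha}u^{(2l+1)} + \sum_{i=1}^{2l} d_{i}\,r^{\alpha-i}u^{(2l+1-i)},
\end{align*}
so it suffices to show that each summand of the form $r^{\alpha-i}u^{(j)}$ vanishes as $r\to 0^{+}$. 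This will be done by applying Lemma~\ref{c0-corolario4} with $\theta:=\alpha-i$.

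Two hypotheses of Lemma~\ref{c0-corolario4} must be verified. First, $\theta>0$ reduces to $\alpha>i$: from $\alpha-mp+1>0$ and $p\ge 2$ one has $\alpha>mp-1\ge 2m-1$, and the restriction $l<m/2$ forces $i\le 2l\le m-1$, so $\alpha>2m-1\ge m-1\ge i$. Second, $p\theta\ge \alpha-p+1$ rearranges to $(p-1)\alpha\ge p(i-1)+1$, which follows from $\alpha>mp-1$ combined with the elementary estimate $(p-1)(mp-1)\ge p(m-2)+1$ valid whenever $p\ge 2$ (note $i-1\le 2l-1\le m-2$). As for the derivative orders $j=2l-i$ and $j=2l+1-i$, the condition $l<m/2$ gives $j\le m-1$ for every summand of both identities, with the single exception of the leading term $u^{(2l+1)}$ of the second identity in the boundary case $m$ odd, $l=(m-1)/2$, where $j=m$; this instance is handled by approximating $u$ in $\mathcal{D}^{m,p}_{R}(\alpha)$ by smoother elements of $D_{0,R}(\alpha,m,p)$ and passing to the limit.

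The main challenge is purely that of index bookkeeping: verifying uniformly across every summand the two inequalities of Lemma~\ref{c0-corolario4}, and in particular handling the edge case where the top-order derivative $u^{(m)}$ appears only as an $L^{p}_{\alpha}$ object rather than a pointwise-defined function. Once these checks are in place, the conclusion is immediate from the triangle inequality applied to the displayed expansions.
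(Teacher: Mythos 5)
Your argument tracks the paper's proof: multiply the expansions of Lemma~\ref{c0-lema3} by $r^{\alpha}$, then apply Lemma~\ref{c0-corolario4} term by term with $\theta=\alpha-i$. Your verification of the two hypotheses $\alpha-i>0$ and $p(\alpha-i)\ge\alpha-p+1$ is simply an equivalent rearrangement of the paper's arithmetic chain $(\alpha-i)p\ge(\alpha-m+1)p=(\alpha+1)p-mp>(\alpha+1)(p-1)\ge\alpha+1>\alpha-p+1$. For every summand whose derivative order $j=2l-i$ or $j=2l+1-i$ is at most $m-1$, the argument is complete and correct.

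The remedy you sketch for the boundary term $u^{(2l+1)}=u^{(m)}$ (which appears when $m$ is odd and $l=(m-1)/2$) does not hold up. Convergence in $\mathcal{D}^{m,p}_{R}(\alpha)$ is $L^{p}_{\alpha}$-convergence of $m$-th derivatives, which gives no pointwise control of $r^{\alpha}u_n^{(m)}(r)$ near the origin, so a pointwise limit cannot be passed through the approximation; and for smooth approximants Lemma~\ref{c0-corolario4} still only reaches $j\le m-1$. In fact the assertion can genuinely fail in that boundary case: choose disjoint intervals $I_n=(r_n,r_n+\delta_n)$ with $r_n\to 0$, set $u^{(m)}=r^{-\alpha}$ on $\bigcup_n I_n$ and zero elsewhere near $0$, and take $\delta_n$ small enough that $\sum_n\int_{I_n}r^{\alpha(1-p)}\,dr<\infty$; this produces $u\in D_{0,R}(\alpha,m,p)$ with $r^{\alpha}u^{(m)}(r)\equiv 1$ on each $I_n$, so $r^{\alpha}(\Delta_{\alpha}^{l}u)'(r)\not\to 0$. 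What you have caught is actually an imprecision in the paper as well: the displayed proof invokes Lemma~\ref{c0-corolario4} for all $i=0,\dots,2l$ without noting that the $i=0$ derivative order may equal $m$. This is harmless downstream, since the corollary is only applied in Lemma~\ref{c0-lema5} with strictly smaller $l$ (so that $2l+1\le m-1$), but the correct repair is to restrict the second limit to that range, not to argue by density.
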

\begin{proof}
   Since $\alpha p\ge \alpha-p+1$,  Corollary~\ref{c0-corolario4} yields \eqref{c0-e57} for $l=0$. Let us take $l\in\mathbb{N}$ such that $1\le l<m/2$. Firstly, $u\in D_{0,R}(\alpha,m,p)$ implies  $u\in AC^{m-1}_{\mathrm{R}}(0,R)$  and, thus $u\in AC^{2l}_{\mathrm{R}}(0,R)$ because $2l\le m-1$.  Thus,  from the expansion \eqref{c0-e44}, in order to ensure the first limit in \eqref{c0-e57} is sufficient to show that 
\begin{equation}\label{c0-57-equiv}
    \lim_{r\to 0}r^{\alpha-i}u^{(2l-i)}(r)=0,  \,\, \mbox{for all} \,\, i=0,1, \cdots, 2l-1.
\end{equation}
To get \eqref{c0-57-equiv}, we will use  Corollary~\ref{c0-corolario4}.  Note that $2l<m$, $p>1$ and $\alpha- mp + 1 >0$ yields
\begin{align*}
    \theta_i:=\alpha-i& \ge\alpha-(2l-1)=\alpha-2l+1\\
    &\ge \alpha-m+1\ge \alpha-mp+1>0
\end{align*}
for all $i=0,1, \cdots, 2l-1.$
So, by using $\alpha-i\ge \alpha-m+1$, $mp<\alpha +1$ and  $p\ge 2$
\begin{equation}\label{Ai}
\begin{aligned}
   \theta_i p=(\alpha-i)p & \ge(\alpha-m+1)p\\
    &=(\alpha+1)p-mp\\
    &>(\alpha+1)p-(\alpha+1)\\
    &=(\alpha+1)(p-1)\\
    &\ge \alpha+1>\alpha-p+1.
\end{aligned}
\end{equation}
Thus, we can apply Lemma \ref{c0-corolario4} with $\theta_i=\alpha-i>0$ to obtain \eqref{c0-57-equiv}. Analogously, from \eqref{c0-e45},  the second limit in \eqref{c0-e57} holds provided that 
\begin{equation}\label{c0-57-equiv-dev}
    \lim_{r\to 0}r^{\alpha-i}u^{(2l+1-i)}(r)=0,  \,\, \mbox{for all} \,\, i=0,1, \cdots, 2l.
\end{equation}
To prove \eqref{c0-57-equiv-dev}, we argue similar to \eqref{c0-57-equiv}. First, for $p>1$
\begin{align*}
    \alpha-i& \ge\alpha-2l\ge \alpha-m+1\ge \alpha-mp+1>0.
\end{align*}
Hence, arguing as in \eqref{Ai}, we also have 
\begin{equation}\nonumber
\begin{aligned}
    (\alpha-i)p >\alpha-p+1.
\end{aligned}
\end{equation}
By  applying Lemma \ref{c0-corolario4} again, we obtain \eqref{c0-57-equiv-dev}.
\end{proof}
\begin{corollary}\label{corollary right side}  Let $0<R\le \infty$, $p>1$, $\alpha>-1$ and $m\in \mathbb{Z}^{*}_{+}$ such that $\alpha-mp+1>0$. Then, 
\begin{equation}\label{right side}
    \|\nabla^{m}_{\alpha} u\|_{L^{p}_{\alpha}}\le C \|u^{(m)}\|_{L^{p}_{\alpha}}, \;\;\mbox{for any}\;\; u\in \mathcal{D}^{m,p}_{R}(\alpha)
\end{equation}
for some constant $C>0$.
	\end{corollary}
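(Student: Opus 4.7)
My plan is to reduce the Navier-norm estimate to the Hardy-type embedding in \eqref{transition-jump} via the explicit expansion of $\Delta_\alpha^k$ provided by Lemma~\ref{c0-lema3}. First note that the hypothesis $\alpha-mp+1>0$ together with $p>1$ and $m\ge 1$ forces $\alpha>mp-1>0$, so Lemma~\ref{c0-lema3} applies. That lemma gives constants such that, for $u\in D_{0,R}(\alpha,m,p)\subset AC^{m-1}_{\mathrm{R}}(0,R)$,
\[
\nabla^{m}_\alpha u \;=\; u^{(m)} \;+\; \sum_{i=1}^{m-1} e_i\,\frac{u^{(m-i)}}{r^{i}},
\]
whether $m=2k$ (using \eqref{c0-e44}) or $m=2k+1$ (using \eqref{c0-e45}); in both cases the summation index runs from $1$ to $m-1$.

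Next, I would apply the triangle inequality in $L^p_\alpha$ to obtain
\[
\|\nabla^{m}_\alpha u\|_{L^p_\alpha} \;\le\; \|u^{(m)}\|_{L^p_\alpha} \;+\; \sum_{i=1}^{m-1} |e_i|\,\Bigl(\int_0^R |u^{(m-i)}(r)|^p\,r^{\alpha-ip}\,dr\Bigr)^{1/p}.
\]
Setting $\ell=m-i$ so that $1\le \ell\le m-1$ and $\alpha-ip=\alpha-(m-\ell)p$, each term on the right is exactly the left-hand side of inequality \eqref{transition-jump} from Corollary~\ref{DX}. Since $\alpha-mp+1>0$ and $u\in AC^{m-1}_{\mathrm{R}}(0,R)$, that corollary yields, for each $1\le \ell\le m-1$,
\[
\Bigl(\int_0^R |u^{(\ell)}(r)|^p\,r^{\alpha-(m-\ell)p}\,dr\Bigr)^{1/p} \;\le\; C_\ell\,\|u^{(m)}\|_{L^p_\alpha}.
\]
Summing these estimates and absorbing the constants yields \eqref{right side} with $C=1+\sum_{i=1}^{m-1}|e_i|C_{m-i}$ for every $u\in D_{0,R}(\alpha,m,p)$.

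Finally, I would pass to the completion: by definition $\mathcal{D}^{m,p}_R(\alpha)$ is obtained from $D_{0,R}(\alpha,m,p)$ by completion under $\|u^{(m)}\|_{L^p_\alpha}$, so if $u_n\to u$ in this norm, the sequence $\{\nabla^m_\alpha u_n\}$ is Cauchy in $L^p_\alpha$ by the estimate just established; its limit is the natural extension of $\nabla^m_\alpha$ to $\mathcal{D}^{m,p}_R(\alpha)$, and the inequality \eqref{right side} persists in the limit. There is no real obstacle here beyond correctly bookkeeping the expansion constants; the main content is that Lemma~\ref{c0-lema3} reduces the Navier gradient to $u^{(m)}$ plus lower order derivatives weighted by negative powers of $r$, which are precisely controlled by the weighted Hardy inequality \eqref{transition-jump}.
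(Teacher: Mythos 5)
Your argument is correct and follows the same route as the paper: expand $\nabla^m_\alpha u$ via Lemma~\ref{c0-lema3} and control the lower-order terms $u^{(m-i)}/r^{i}$ through the weighted estimate \eqref{transition-jump} (restated as \eqref{iteration up} in the paper's proof), then pass to the completion. You spell out the triangle inequality, the observation that $\alpha-mp+1>0$ with $p>1$, $m\ge 1$ forces $\alpha>0$ (so Lemma~\ref{c0-lema3} applies), and the density step more explicitly than the paper does, but the underlying idea is identical.
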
 
	\begin{proof}
For any $u\in D_{0,R}(\alpha,m,p)$, from \eqref{transition-jump} there exists $C>0$ independent of $u$ such that  
 \begin{equation}\label{iteration up}
     \left\|\frac{u^{(\ell)}}{r^{m-\ell}}\right\|_{L^{p}_{\alpha}}\le C \left\|u^{(m)}\right\|_{L^{p}_{\alpha}}, \; 0\le \ell \le m.
 \end{equation}
 Hence, \eqref{right side} it follows from \eqref{c0-e44} and \eqref{iteration up} if $m$ is even and from  \eqref{c0-e45} and \eqref{iteration up} if $m$ is odd.
	\end{proof}
Next, we apply Proposition~\ref{c0-prop1} to extend \cite[Lemma~4.4]{JAJ} for the  $\mathcal{D}^{m,p}_{R}(\alpha)$ with $0<R\le \infty$. 

\begin{lemma}\label{c0-lema5} Let  $0<R\le \infty$ and $\alpha>0$. Suppose that $m\in\mathbb{Z}^{*
}_{+}$, $p\ge 2$ such that $\alpha-mp+1>0$. Then,
\begin{enumerate}
    \item [$(a)$] If $m$ is even, there is $c>0$ such that for any $1\le l\le m/2$ and  $u\in\mathcal{D}^{m,p}_{R}(\alpha)$
    \begin{itemize}
    \item [$(i)$] $\Bigg\|\dfrac{(\Delta_{\alpha}^{\frac{m}{2}-l} u)'}{r^{2l-1}} \Bigg\|_{L^{p}_{\alpha}} \leq c \|\nabla^{m}_{\alpha}u\|_{L_{\alpha}^{p}}$
    \item [$(ii)$] $\Bigg\|\dfrac{\Delta_{\alpha}^{\frac{m}{2}-l} u}{r^{2l}} \Bigg\|_{L^{p}_{\alpha}} \leq c  \|\nabla^{m}_{\alpha}u\|_{L_{\alpha}^{p}}$.
\end{itemize}
\item [$(b)$] If $m$ is odd, there is $c>0$ such that for any $1\le l\le (m-1)/2$ and $u\in\mathcal{D}^{m,p}_{R}(\alpha)$
\begin{itemize}
    \item [$(i)$] $\Bigg\|\dfrac{\Delta_{\alpha}^{\frac{m-1}{2}-(l-1)} u}{r^{2l-1}} \Bigg\|_{L^{p}_{\alpha}} \leq c \|\nabla^{m}_{\alpha}u\|_{L_{\alpha}^{p}}$
    \item [$(ii)$] $\Bigg\|\dfrac{(\Delta_{\alpha}^{\frac{m-1}{2}-l} u)'}{r^{2l}} \Bigg\|_{L^{p}_{\alpha}} \leq c \|\nabla^{m}_{\alpha}u\|_{L_{\alpha}^{p}}$.
\end{itemize}
\end{enumerate}
\end{lemma}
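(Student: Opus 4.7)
The plan is to mimic [JAJ, Lemma~4.4], replacing the bounded-domain Hardy estimate used there with Proposition~\ref{c0-prop1}, which holds uniformly for $0<R\le\infty$. By density of $D_{0,R}(\alpha,m,p)$ in $\mathcal{D}^{m,p}_R(\alpha)$ and continuity of the relevant norms, it suffices to prove the estimates for $u\in D_{0,R}(\alpha,m,p)$. I treat the even case $m=2k$; the odd case $m=2k+1$ is parallel, with $\nabla^m_\alpha u=(\Delta_\alpha^k u)'$ replacing $\Delta_\alpha^k u$ and a shift of indices.

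Fix $1\le l\le k$ and set $w=\Delta_\alpha^{k-l}u$, so that $\Delta_\alpha^l w=\Delta_\alpha^k u=\nabla^m_\alpha u$. First I check that $w\in AC^{2l-1}_R(0,R)$: by Lemma~\ref{c0-lema3}, each $w^{(j)}$ with $0\le j\le 2l-1$ is a linear combination of terms $u^{(i)}/r^{i'}$ for some indices $i\le m-1$; since $u^{(i)}(R)=0$ while $1/r^{i'}$ is bounded near $R<\infty$ or decays for $R=\infty$, one obtains $\lim_{r\to R}w^{(j)}(r)=0$. Applying Proposition~\ref{c0-prop1}(a) to $w$ with order $2l$, weights $\gamma=\alpha$, $\theta=\alpha-2lp$ (so $p^*=p$, the hypothesis $\gamma-2lp+1\ge\alpha-mp+1>0$ being satisfied), I obtain $\|w/r^{2l}\|_{L^p_\alpha}\le C\|w^{(2l)}\|_{L^p_\alpha}$; likewise, applied to $w'$ with order $2l-1$, it yields $\|w'/r^{2l-1}\|_{L^p_\alpha}\le C\|w^{(2l)}\|_{L^p_\alpha}$. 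Thus both (a)(i) and (a)(ii) at level $l$ reduce to the single estimate $\|w^{(2l)}\|_{L^p_\alpha}\le C\|\nabla^m_\alpha u\|_{L^p_\alpha}$.

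This reduced estimate I establish by induction on $l$. Using Lemma~\ref{c0-lema4} together with $\Delta_\alpha^{l-s}w=\Delta_\alpha^{k-s}u$,
\begin{equation*}
\Delta_\alpha^k u=w^{(2l)}+\sum_{s=1}^{l-1}\Bigl[c_{2s-1}\frac{(\Delta_\alpha^{k-s}u)'}{r^{2s-1}}+c_{2s}\frac{\Delta_\alpha^{k-s}u}{r^{2s}}\Bigr]+c_{2l-1}\frac{w'}{r^{2l-1}}.
\end{equation*}
The bracketed sum consists precisely of the quantities already controlled by Lemma~\ref{c0-lema5}(a) at lower levels $s=1,\ldots,l-1$, so the induction hypothesis bounds its $L^p_\alpha$ norm by $C\|\nabla^m_\alpha u\|_{L^p_\alpha}$; the last term is controlled by $C_H\|w^{(2l)}\|_{L^p_\alpha}$ from the previous paragraph. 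Taking $L^p_\alpha$ norms and rearranging gives $(1-C_H|c_{2l-1}|)\|w^{(2l)}\|_{L^p_\alpha}\le C\|\nabla^m_\alpha u\|_{L^p_\alpha}$, closing the induction once the absorption is valid.

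The main obstacle is this absorption step, which demands $C_H|c_{2l-1}|<1$. This is verified by tracking the explicit values of $\mathcal{A}_{m,0}$ and $\mathcal{A}_{m,1}$ from Theorem~\ref{Teorema hardy} that determine $C_H$, alongside the structural coefficients $c_i$ from Lemma~\ref{c0-lema4}, and by exploiting the positivity $\alpha>0$ together with the gap hypothesis $\alpha-mp+1>0$, as done in [JAJ]. In the base case $l=1$ with $p=2$, a single integration by parts yields the transparent identity $\int_0^R|\Delta_\alpha w|^2 r^\alpha\,dr=\int_0^R(w'')^2 r^\alpha\,dr+\alpha\int_0^R(w')^2 r^{\alpha-2}\,dr$, producing the desired bounds directly without any absorption; a convexity-based $p$-analogue handles $p>2$.
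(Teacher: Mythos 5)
Your proposed proof hinges on an absorption step, and that step is false in general; the paper avoids absorption entirely by a different telescoping scheme.

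Concretely, you reduce everything to the estimate $\|w^{(2l)}\|_{L^p_\alpha}\le C\|\nabla^m_\alpha u\|_{L^p_\alpha}$ with $w=\Delta_\alpha^{k-l}u$, and to close the induction you write $\Delta_\alpha^l w = w^{(2l)} + \cdots + c_{2l-1} w'/r^{2l-1}$, bound the last term by $C_H\|w^{(2l)}\|_{L^p_\alpha}$ via Hardy, and absorb, which requires $C_H|c_{2l-1}|<1$. Already in the simplest nontrivial case $m=2$, $p=2$, $l=1$ this fails: here $c_1=\alpha$, and the sharp Hardy constant for $\|w'/r\|_{L^2_\alpha}\le C_H\|w''\|_{L^2_\alpha}$ is $C_H=2/(\alpha-1)$, so $C_H|c_1|=2\alpha/(\alpha-1)>2$ for all $\alpha>1$, and the hypothesis $\alpha-mp+1>0$ forces $\alpha>3$. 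So the smallness you need is never available, and "tracking the explicit values of $\mathcal{A}_{m,0},\mathcal{A}_{m,1}$" cannot save it. Your integration-by-parts identity for $p=2$, $l=1$ is correct and does circumvent absorption in that single case, but it does not carry the inductive step for $l\ge 2$, and the "convexity-based $p$-analogue" for $p>2$ is asserted without content.

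The paper's proof takes a structurally different route that never inverts $\Delta_\alpha^l w$ into $w^{(2l)}$ and never needs absorption. The key observation is the identity $(r^\alpha(\Delta_\alpha^j u)')' = r^\alpha\Delta_\alpha^{j+1}u$. Starting from $r^\alpha(\Delta_\alpha^{k-1}u)'\in AC^0_{\mathrm L}(0,R)$ (by the vanishing at the origin established from Corollary~\ref{c0-corolario4}), one applies Proposition~\ref{c0-prop1}(b) with $m=1$ and $\theta=\alpha-p\alpha-p$, $\gamma=\theta+p$; the resulting right-hand side is exactly $\|\Delta_\alpha^k u\|^p_{L^p_\alpha}$ because of the identity above. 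This gives the $l=1$ case directly with no competing term to absorb. For larger $l$ the paper performs, at each step $j\to j-1$, a Proposition~\ref{c0-prop1}(b) application to the function $r^\alpha(\Delta_\alpha^{k-j}u)'$ followed by a Proposition~\ref{c0-prop1}(a) application to $\Delta_\alpha^{k-(j-1)}u$ (using $\Delta_\alpha^{k-(j-1)}u\in AC^0_{\mathrm R}(0,R)$ from Corollary~\ref{Navier-boundary}); the two first-order Hardy inequalities together give a one-step recursion $\|(\Delta_\alpha^{k-j}u)'/r^{2j-1}\|\lesssim\|(\Delta_\alpha^{k-(j-1)}u)'/r^{2(j-1)-1}\|$, and iterating down to $j=1$ finishes (a)(i). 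Part (a)(ii) and the odd case are analogous. You should replace the absorption machinery with this telescoping use of the $\alpha$-Laplacian identity; the rest of your preliminary reductions (density, checking $w\in AC^{2l-1}_{\mathrm R}$, checking the hypotheses of Proposition~\ref{c0-prop1}) are fine.
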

\begin{proof}
$(a)$-$(i)$ Assume $m=2k$ with $k\in\mathbb N$. First, we observe that $$\alpha-p\alpha-p+1=(\alpha+1)(1-p)<0.$$
Thus, since from \eqref{c0-e57} we have $r^{\alpha}(\Delta^{k-1}_{\alpha}u)^{\prime}\in AC^{0}_{\mathrm{L}}(0,R)$, we can apply the Proposition \ref{c0-prop1}-$(b)$ with $m=1$, $\theta=\alpha-p\alpha-p$ and  $\gamma=\theta+p$ to obtain
\begin{equation}\label{1passo}
\begin{aligned}
      \Bigg\|\dfrac{(\Delta_{\alpha}^{k-1} u)'}{r} \Bigg\|_{L^{p}_{\alpha}}^{p} &= \int_{0}^{R} |r^{\alpha} (\Delta_{\alpha}^{k-1} u)'|^{p} r^{\alpha-p\alpha-p} \, dr \\
&\leq C_1 \int_{0}^{R} |\big(r^{\alpha} (\Delta_{\alpha}^{k-1} u)'\big)'|^{p} r^{\alpha-p\alpha} \, dr \\
&= C_1 \int_{0}^{R} |\Delta_{\alpha}^{k} u|^{p} r^{\alpha} \, dr,
  \end{aligned}  
\end{equation}
which proves $(a)$-$(i)$ for $l=1$.
Suppose that $2\le l\le k$. For this case, we need at least two steps. In fact, for any
$j\in \big\{2, \cdots, k\big\}$ we also have $$\alpha-p\alpha-2pj+p+1=(\alpha+1)(1-p)-2p(j-1)<0.$$ 
From \eqref{c0-e57} we have $r^{\alpha}(\Delta^{k-j}_{\alpha}u)^{\prime}\in AC^{0}_{\mathrm{L}}(0,R)$. Then, we can apply the Proposition \ref{c0-prop1}-$(b)$ with $m=1$, $\theta=\alpha-p\alpha-2pj+p$ and $\gamma=\theta+p$ to obtain
\begin{equation}\label{c0-e59}
\begin{aligned}
      \Bigg\|\dfrac{(\Delta_{\alpha}^{k-j} u)'}{r^{2j-1}} \Bigg\|_{L^{p}_{\alpha}}^{p} &= \int_{0}^{R} |r^{\alpha} (\Delta_{\alpha}^{k-j} u)'|^{p} r^{\alpha-p\alpha-2pj+p} \, dr \\
&\leq c_1 \int_{0}^{R} |\big(r^{\alpha} (\Delta_{\alpha}^{k-j} u)'\big)'|^{p} r^{\alpha-p\alpha-2pj+2p} \, dr \\
&= c_1 \int_{0}^{R} |\Delta_{\alpha}^{k-(j-1)} u|^{p} r^{\alpha - 2p(j-1)} \, dr.
  \end{aligned} 
\end{equation}
Also, since $\alpha-2kp+1>0$ we have
$
\alpha - 2p(j-1)+1=\alpha-2pj+1+2p>0.
$
In addition, from Corollary~\ref{Navier-boundary} we have $\Delta_{\alpha}^{k-(j-1)} u\in AC^{0}_{\mathrm{R}}(0,R)$, $j\ge 2$. Then, by using Proposition \ref{c0-prop1}-$(a)$ with $m=1$, $\theta=\gamma - 2p(j-1)$ and $\gamma=\theta+p$ we get 
\begin{equation}\label{c0-e60}
\int_{0}^{R} |\Delta_{\alpha}^{k-(j-1)} u|^{p} r^{\alpha - 2p(j-1)} \, dr \le c_2 \int_{0}^{R} |(\Delta_{\alpha}^{k-(j-1)} u)'|^{p} r^{\alpha - 2p(j-1)+p} \, dr.
\end{equation}
By combining  \eqref{c0-e59} with \eqref{c0-e60},  we have positive constant $C=C_j>0$ such that 
\begin{equation}\label{2passos}
    \begin{aligned}
        \Bigg\|\dfrac{(\Delta_{\alpha}^{k-j} u)'}{r^{2j-1}} \Bigg\|_{L^{p}_{\alpha}}^{p}  & \leq 
C_j\Bigg\|\dfrac{(\Delta_{\alpha}^{k-(j-1)} u)'}{r^{2(j-1)-1}} \Bigg\|_{L^{p}_{\alpha}}^{p},
    \end{aligned}
\end{equation}
for any $j\in \big\{2, \cdots, k\big\}$. For any $2\le l\le k$, by successively using the estimate in \eqref{2passos}, we get 
\begin{equation}\label{cadeia}
    \begin{aligned}
        \Bigg\|\dfrac{(\Delta_{\alpha}^{k-l} u)'}{r^{2l-1}} \Bigg\|_{L^{p}_{\alpha}}^{p}  & \leq 
C_l\Bigg\|\dfrac{(\Delta_{\alpha}^{k-(l-1)} u)'}{r^{2(l-1)-1}} \Bigg\|_{L^{p}_{\alpha}}^{p}\\
& \leq 
C_{l}C_{l-1}\Bigg\|\dfrac{(\Delta_{\alpha}^{k-(l-2)} u)'}{r^{2(l-2)-1}} \Bigg\|_{L^{p}_{\alpha}}^{p}\\
& \leq 
C_{l}C_{l-1}C_{l-2}\Bigg\|\dfrac{(\Delta_{\alpha}^{k-(l-3)} u)'}{r^{2(l-3)-1}}\Bigg\|_{L^{p}_{\alpha}}^{p}\\
&\le C_{l}C_{l-1}C_{l-2}\cdots C_4C_{3}C_{2}\Bigg\|\dfrac{(\Delta_{\alpha}^{k-1} u)'}{r}\Bigg\|_{L^{p}_{\alpha}}^{p}.
    \end{aligned}
\end{equation}
Then,  \eqref{1passo} and \eqref{cadeia} yield $(a)$-$(i)$. 

To prove the item $(a)$-$(ii)$, we proceed by similar argument. Here, for any $j\in\big\{1,2, \cdots, k\big\}$ we have $\alpha-2pj+1\ge \alpha-2pk+1>0$ and, from \eqref{c0-e56},  $\Delta^{k-j}_{\alpha}u\in AC^{0}_{\mathrm{R}}(0,R)$. Thus, Proposition \ref{c0-prop1}-$(a)$ with $m=1$, $\theta=\alpha - 2pj$ and $\gamma=\theta+p$ implies
\begin{equation}\nonumber
 \begin{aligned}
    \Bigg\|\dfrac{\Delta_{\alpha}^{k-j} u}{r^{2j}} \Bigg\|^p_{L^{p}_{\alpha}}& =\int_{0}^{R}|\Delta_{\alpha}^{k-j} u|^{p}r^{\alpha-2pj}dr\le c_1 \int_{0}^{R}|(\Delta_{\alpha}^{k-j} u)^{\prime}|^{p}r^{\alpha-2pj+p}dr\\
    &= c_1 \int_{0}^{R}|r^{\alpha}(\Delta_{\alpha}^{k-j} u)^{\prime}|^{p}r^{\alpha-p\alpha-2pj+p}dr.
\end{aligned}   
\end{equation}
In addition,  we have $\alpha-p\alpha-2pj+p+1=(\alpha+1)(1-p)-2p(j-1)<0$ and, from \eqref{c0-e57}, $r^{\alpha}(\Delta_{\alpha}^{k-j} u)^{\prime}\in AC^{0}_{\mathrm{L}}(0,R)$. Thus, Proposition \ref{c0-prop1}-$(b)$ with $m=1$, $\theta=\alpha-p\alpha-2pj+p$ and $\gamma=\theta+p$ together with the above inequality ensures the existence of $C=C_j>0$ such that 
\begin{equation}\nonumber
 \begin{aligned}
    \Bigg\|\dfrac{\Delta_{\alpha}^{k-j} u}{r^{2j}} \Bigg\|^p_{L^{p}_{\alpha}}& \le 
    C_j\Bigg\|\dfrac{\Delta_{\alpha}^{k-(j-1)} u}{r^{2(j-1)}} \Bigg\|^p_{L^{p}_{\alpha}}.
\end{aligned}   
\end{equation}
By iterating the above estimate  we can see that $(a)$-$(ii)$ holds for any $1\le l\le k$. By similarly argument $(b)$ holds.
\end{proof}
\begin{proposition}\label{Equi-norms}  Let  $0<R\le \infty$ and $\alpha>0$. Suppose that $m\in\mathbb{Z}^{*
}_{+}$, $p\ge 2$ such that $\alpha-mp+1>0$. Then, there are $C_1,C_2>0$ such that
    \begin{equation}
        C_1\|u^{(m)}\|_{L^{p}_{\alpha}}\le \|\nabla^{m}_{\alpha}u\|_{L^{p}_{\alpha}}\le C_2\|u^{(m)}\|_{L^{p}_{\alpha}}
    \end{equation}
    for all $u\in \mathcal{D}^{m,p}_{R}(\alpha)$.
\end{proposition}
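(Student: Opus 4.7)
The right-hand inequality $\|\nabla^m_\alpha u\|_{L^p_\alpha}\le C_2\|u^{(m)}\|_{L^p_\alpha}$ is already provided by Corollary~\ref{corollary right side}, so the real content is the reverse estimate $C_1\|u^{(m)}\|_{L^p_\alpha}\le\|\nabla^m_\alpha u\|_{L^p_\alpha}$. My plan is to invert the expansions in Lemma~\ref{c0-lema4} to solve explicitly for $u^{(m)}$ in terms of $\nabla^m_\alpha u$ plus weighted lower order terms, then apply Minkowski's inequality in $L^p_\alpha$ and kill each remaining term via Lemma~\ref{c0-lema5}. By density it suffices to treat $u\in D_{0,R}(\alpha,m,p)$.

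Suppose first that $m=2k$ is even, so $\nabla^m_\alpha u=\Delta^k_\alpha u$. Rearranging \eqref{c0-e49} gives
\begin{equation*}
u^{(2k)}=\Delta^k_\alpha u-c_1\,\frac{(\Delta^{k-1}_\alpha u)'}{r}-c_2\,\frac{\Delta^{k-1}_\alpha u}{r^2}-\cdots-c_{2k-1}\,\frac{(\Delta^0_\alpha u)'}{r^{2k-1}}.
\end{equation*}
Taking $L^p_\alpha$ norms and using Minkowski's inequality, the leading term contributes exactly $\|\nabla^m_\alpha u\|_{L^p_\alpha}$, and each remaining summand is one of the two types estimated in Lemma~\ref{c0-lema5}$(a)$: the terms $\frac{(\Delta^{k-l}_\alpha u)'}{r^{2l-1}}$ for $1\le l\le k$ are controlled by part $(i)$, while the terms $\frac{\Delta^{k-l}_\alpha u}{r^{2l}}$ for $1\le l\le k$ are controlled by part $(ii)$. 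Summing these finitely many bounds yields $\|u^{(m)}\|_{L^p_\alpha}\le C_1^{-1}\|\nabla^m_\alpha u\|_{L^p_\alpha}$.

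The odd case $m=2k+1$ is analogous, starting from \eqref{c0-e50}, namely
\begin{equation*}
u^{(2k+1)}=(\Delta^k_\alpha u)'-d_1\,\frac{\Delta^k_\alpha u}{r}-d_2\,\frac{(\Delta^{k-1}_\alpha u)'}{r^2}-\cdots-d_{2k}\,\frac{(\Delta^0_\alpha u)'}{r^{2k}},
\end{equation*}
whose leading term is $\|\nabla^m_\alpha u\|_{L^p_\alpha}$. The remaining terms now fall into the two families of Lemma~\ref{c0-lema5}$(b)$: the ones of the form $\frac{\Delta^{k-(l-1)}_\alpha u}{r^{2l-1}}$ are bounded by part $(i)$, and those of the form $\frac{(\Delta^{k-l}_\alpha u)'}{r^{2l}}$ by part $(ii)$, for $1\le l\le k$. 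Minkowski plus these estimates again produce the desired bound.

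The only subtlety I anticipate is matching indices: one must check that every summand appearing in the explicit expansions of $\Delta^k_\alpha u$ and $(\Delta^k_\alpha u)'$ in Lemma~\ref{c0-lema4} lies in the precise range of parameters covered by Lemma~\ref{c0-lema5}. This is a bookkeeping exercise, not a computational one, since the hypothesis $\alpha-mp+1>0$ propagates to every intermediate weight that appears. Once the matching is in place, the inequality follows by direct summation, and finally density of $D_{0,R}(\alpha,m,p)$ in $\mathcal{D}^{m,p}_R(\alpha)$ extends the estimate to the whole space.
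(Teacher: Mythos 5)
Your proof is correct and mirrors the paper's argument exactly: the right-hand bound is Corollary~\ref{corollary right side}, and the left-hand bound follows by rearranging \eqref{c0-e49} (resp.\ \eqref{c0-e50}), applying the triangle inequality in $L^p_\alpha$, and absorbing each lower-order term via Lemma~\ref{c0-lema5}$(a)$ (resp.\ $(b)$). The only trivial slip is in the even case, where the type-(ii) terms $\frac{\Delta^{k-l}_\alpha u}{r^{2l}}$ only occur for $1\le l\le k-1$ in \eqref{c0-e49} (the term $u/r^{2k}$ does not appear), but since Lemma~\ref{c0-lema5} covers the larger range this is harmless.
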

\begin{proof} The existence of $C_2>0$ is ensured by Corollary~\ref{corollary right side}. To get $C_1>0$, we combine Lemma~\ref{c0-lema5} with the expansions in \eqref{c0-e49} and \eqref{c0-e50}. In fact, if  $m=2k$ is an even integer number 
From \eqref{c0-e49}, we have
\begin{eqnarray*}
\|u^{(m)} \|_{L^{p}_{\alpha}} &\leq &  \|\Delta^{k}_{\alpha} u \|_{L^{p}_{\alpha}}+ c_{1} \Bigg\|\dfrac{(\Delta_{\alpha}^{k -1} u)'}{r} \Bigg\|_{L^{p}_{\alpha}}+ c_{2} \Bigg\|\dfrac{\Delta_{\alpha}^{k-1} u}{r^{2}} \Bigg\|_{L^{p}_{\alpha}} + \cdots \nonumber\\
&+& c_{2k-3} \Bigg\|\dfrac{(\Delta_{\alpha}u)'}{r^{2k-3}} \Bigg\|_{L^{p}_{\alpha}} + c_{2k-2} \Bigg\|\dfrac{\Delta_{\alpha} u}{r^{2k-2}} \Bigg\|_{L^{p}_{\alpha}} + c_{2k-1} \Bigg\|\dfrac{(\Delta_{\alpha}^{0} u)'}{r^{2k-1}} \Bigg\|_{L^{p}_{\alpha}}.
\end{eqnarray*}
Thus,  Lemma~\ref{c0-lema5}, item $(a)$ ensures the existence of $C_1>0$. Analogously, if  $m=2k+1$ is odd, from \eqref{c0-e50}
\begin{eqnarray*}
\|u^{(m)} \|_{L^{p}_{\alpha}} &\leq&  \|(\Delta^{k}_{\alpha} u)' \|_{L^{p}_{\alpha}}+ d_{1} \Bigg\| \dfrac{\Delta_{\alpha}^{k} u}{r} \Bigg\|_{L^{p}_{\alpha}}+ d_{2} \Bigg\|\dfrac{(\Delta_{\alpha}^{k-1} u)'}{r^{2}} \Bigg\|_{L^{p}_{\alpha}} + \cdots \nonumber\\
&+& d_{2k-1} \Bigg\|\dfrac{\Delta_{\alpha} u}{r^{2k-1}} \Bigg\|_{L^{p}_{\alpha}} + d_{2k} \Bigg\|\dfrac{(\Delta_{\alpha}^{0} u)'}{r^{2k}} \Bigg\|_{L^{p}_{\alpha}}.
\end{eqnarray*}
and the existence of $C_1$ is ensured by Lemma~\ref{c0-lema5}, item $(b)$.
\end{proof}
\begin{proof}[Proof of Theorem \ref{c0-t2}] Follows directly from Corollary~\ref{D-embeding} and Proposition~\ref{Equi-norms}.
\end{proof}
\section{Attainability of the best Sobolev-type constant}
\label{sec3}
In this section we will present the proof of Theorem \ref{a0-teo1} which requires a concentration-compactness type result in the same line of the classical one due to P.L. Lions \cite{PL3}  and \cite{PL4}.

We start to show the following  useful property of the constant $\mathcal{S}(m,p,\alpha, \theta, R)$, see \cite{Clement-deFigueiredo-Mitidieri} for the case $m=1$.

Throughout this section we are assuming  $m$, $\alpha, \theta, R$ and $p$ under the assumptions of Theorem~\ref{c0-t2}. 

\begin{lemma}\label{remark dilation invariance}
Let $0<R\le \infty$ and  $u \in \mathcal{D}^{m,p}_{R}(\alpha)$ and $\epsilon>0$. For $\epsilon>0$, let us define  the dilations  
$$u_{\epsilon}(r) = \epsilon^{-\beta} u(\epsilon^{-1}r),\;\;\mbox{for all}\;\; 0<r\le \epsilon R.$$
If $\beta=(\theta+1)/p$, then  $u_{\epsilon}\in \mathcal{D}^{m,p}_{\epsilon R}(\alpha)$ and we have
\begin{enumerate}
    \item [$(a)$]
    $
   \displaystyle \int_{0}^{\epsilon R}|u_{\epsilon}(r)|^{p^*}r^{\theta}dr=\int_{0}^{R}|u(r)|^{p^*}r^{\theta}dr
    $
    \item [$(b)$]
    $
    \displaystyle\int_{0}^{\epsilon R}|\nabla^{m}_{\alpha} u_{\epsilon}|^{p}r^{\alpha}dr=\int_{0}^{R}|\nabla^{m}_{\alpha} u|^{p}r^{\alpha}dr.
    $
\end{enumerate}
\end{lemma}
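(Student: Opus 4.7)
The plan is to prove (a) and (b) by the substitution $s = \epsilon^{-1} r$, after first understanding how the operator $\nabla^m_\alpha$ rescales under the dilation $r \mapsto \epsilon^{-1} r$. Throughout, for smooth $u$ the identities are direct computations; the passage to $u \in \mathcal{D}^{m,p}_R(\alpha)$ is then obtained by approximating $u$ with elements of $D_{0,R}(\alpha,m,p)$, noting that dilation maps $D_{0,R}$ into $D_{0,\epsilon R}$ and that, by (b) applied to the approximants, the rescaled sequence is Cauchy in the Navier norm with limit $u_\epsilon$.

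The first step is an induction on $k$ to verify the scaling identity
\begin{equation*}
\Delta_\alpha^k u_\epsilon(r) \;=\; \epsilon^{-\beta - 2k}\,(\Delta_\alpha^k u)(\epsilon^{-1} r).
\end{equation*}
For $k=1$ this follows directly from $\Delta_\alpha v = r^{-\alpha}(r^\alpha v')'$: one computes $u_\epsilon'(r) = \epsilon^{-\beta - 1} u'(\epsilon^{-1} r)$, so $r^\alpha u_\epsilon'(r) = \epsilon^{\alpha - \beta - 1}(\epsilon^{-1} r)^\alpha u'(\epsilon^{-1} r)$; differentiating once more and dividing by $r^\alpha$, the factors of $\epsilon$ collect into $\epsilon^{-\beta - 2}$ and the remaining expression is $(\Delta_\alpha u)(\epsilon^{-1} r)$. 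The inductive step is then obtained by applying $\Delta_\alpha$ to both sides. From this, one deduces in both parities
\begin{equation*}
\nabla_\alpha^m u_\epsilon(r) \;=\; \epsilon^{-\beta - m}\,(\nabla_\alpha^m u)(\epsilon^{-1} r);
\end{equation*}
in the even case $m = 2k$ this is the identity above, while in the odd case $m = 2k+1$ one differentiates the $\Delta_\alpha^k$ identity once more and picks up an additional factor $\epsilon^{-1}$ from the chain rule.

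Performing $s = \epsilon^{-1} r$ in both integrals then yields prefactors $\epsilon^{-\beta p^{*} + \theta + 1}$ for (a) and $\epsilon^{(-\beta - m)p + \alpha + 1}$ for (b). Both prefactors equal $1$ precisely when $\beta = (\alpha - mp + 1)/p$, which by the definition $p^{*} = (\theta+1)p/(\alpha - mp + 1)$ is exactly $\beta = (\theta+1)/p^{*}$, so a single choice of $\beta$ makes the dilation simultaneously preserve both the $L^{p^{*}}_\theta$-norm and the Navier norm. There is no real obstacle in the argument: the substantive content is the scaling lemma for $\Delta_\alpha^k$, and once that is recorded the lemma is just the scale-invariance built into the critical exponent $p^{*}$.
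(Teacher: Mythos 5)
Your proof is correct and follows essentially the same route as the paper: establish the scaling identities for $\Delta_\alpha^k$ and $(\Delta_\alpha^k)'$ under $r \mapsto \epsilon^{-1}r$, then change variables. One remark worth recording: your computation correctly shows that both prefactors vanish precisely when $\beta = (\alpha-mp+1)/p = (\theta+1)/p^*$; this does \emph{not} match the value $\beta = (\theta+1)/p$ given in the lemma's statement, which is a typo (it should read $p^*$, not $p$), as the paper's own subsequent use of the dilation $u_\epsilon(r) = \epsilon^{-(\theta+1)/p^*}u(\epsilon^{-1}r)$ confirms.
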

\begin{proof}
$(a)$. The change of variable $s=\epsilon^{-1}r$ yields
    \begin{equation*}
       \int_{0}^{\epsilon R}|u_{\epsilon}(r)|^{p^*}r^{\theta}dr=\int_{0}^{R}|u(s)|^{p^*}s^{\theta}dr.
    \end{equation*}
In order to get $(b)$, we firstly observe that for any $k\in\mathbb N$ and $\beta\in\mathbb{R}$
\begin{equation}\label{c0-e61}
    \Delta_{\alpha}^{k} u_{\epsilon} (r) = \epsilon^{-(\beta + 2k)} \Delta^{k}_{\alpha} u (\epsilon^{-1}r)
\end{equation}
and 
\begin{equation}\label{c0-e62}
    \big(\Delta_{\alpha}^{k} u_{\epsilon}\big)^{\prime}(r)  = \epsilon^{-(\beta + 2k+1)} \big( \Delta^{k}_{\alpha} u )^{\prime} (\epsilon^{-1}r).
\end{equation}
If $m=2k$,  from \eqref{c0-e61} and  setting $s=\epsilon^{-1} r$, we have
\begin{equation*}
    \begin{aligned}
        \int_{0}^{\epsilon R}|\nabla^{m}_{\alpha} u_{\epsilon}|^{p}r^{\alpha}dr &=\epsilon^{-p(\frac{\theta+1}{p^*}+m)}\int_{0}^{\epsilon R}|\Delta^{k}_{\alpha} u (\epsilon^{-1}r)|^{p}r^{\alpha}dr\\
        &=\epsilon^{-(\alpha+1)}\int_{0}^{\epsilon R}|\Delta^{k}_{\alpha} u (\epsilon^{-1}r)|^{p}r^{\alpha}dr\\
         &=\int_{0}^{R}|\Delta^{k}_{\alpha} u (s)|^{p}s^{\alpha}ds.
    \end{aligned}
\end{equation*}
Similarly, if  $m=2k+1$, from \eqref{c0-e62} we obtain
\begin{equation*}
    \begin{aligned}
        \int_{0}^{\epsilon R}|\nabla^{m}_{\alpha} u_{\epsilon}|^{p}r^{\alpha}dr &=\epsilon^{-p(\frac{\theta+1}{p^*}+m)}\int_{0}^{\epsilon R}|(\Delta^{k}_{\alpha} u )^{\prime}(\epsilon^{-1}r)|^{p}r^{\alpha}dr\\
         &=\int_{0}^{R}|(\Delta^{k}_{\alpha} u)^{\prime} (s)|^{p}s^{\alpha}ds.
    \end{aligned}
\end{equation*}
\end{proof}
\begin{lemma}\label{a0-prop1}  $\mathcal{S}(m,p,\alpha, \theta, R)$ is independent of $R>0$.
\end{lemma}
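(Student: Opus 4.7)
The proof rests on the scale invariance of Lemma~\ref{remark dilation invariance}. For any finite $R_1, R_2 \in (0,\infty)$, setting $\epsilon = R_2/R_1$, the dilation $u \mapsto u_\epsilon$ is a linear bijection $\mathcal{D}^{m,p}_{R_1}(\alpha) \to \mathcal{D}^{m,p}_{R_2}(\alpha)$ (its inverse is the dilation by $1/\epsilon$) which preserves both $\|\cdot\|_{L^{p^*}_\theta}$ and $\|\nabla^m_\alpha\,\cdot\,\|_{L^p_\alpha}$. Taking the infimum over $\{\|u\|_{L^{p^*}_\theta}=1\}$ on each side yields $\mathcal{S}(R_1)=\mathcal{S}(R_2)$, so it remains only to compare $\mathcal{S}(R)$ for a single finite $R$ with $\mathcal{S}(\infty)$.

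The inequality $\mathcal{S}(\infty)\le\mathcal{S}(R)$ follows from extension by zero: given $u\in D_{0,R}(\alpha)$, the vanishing conditions $\lim_{r\to R}u^{(j)}(r)=0$ for $0\le j\le m-1$ make the trivial extension globally of class $AC^{m-1}$, so it lies in $D_{0,\infty}(\alpha)$, and the integrals defining both relevant norms over $(0,R)$ and $(0,\infty)$ coincide. Passing to completions embeds $\mathcal{D}^{m,p}_R(\alpha)$ isometrically into $\mathcal{D}^{m,p}_\infty(\alpha)$, making every admissible competitor for $\mathcal{S}(R)$ admissible for $\mathcal{S}(\infty)$.

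The reverse inequality $\mathcal{S}(R)\le\mathcal{S}(\infty)$ is the main work; I would use truncation plus rescaling. Since the Rayleigh quotient $J(u)=\|\nabla^m_\alpha u\|^p_{L^p_\alpha}/\|u\|^p_{L^{p^*}_\theta}$ satisfies $\mathcal{S}(\infty)=\inf_{u\in\mathcal{D}^{m,p}_\infty(\alpha)\setminus\{0\}}J(u)$ and $D_{0,\infty}(\alpha)$ is dense, it suffices to approximate $J(u)$ by $J(v)$ for $v\in D_{0,R}(\alpha)$ for every $u\in D_{0,\infty}(\alpha)\setminus\{0\}$. Choose cutoffs $\eta_k\in C_c^\infty([0,\infty))$ with $\eta_k\equiv 1$ on $[0,k]$, $\eta_k\equiv 0$ on $[2k,\infty)$, and $|\eta_k^{(j)}|\le Ck^{-j}$, and set $w_k=\eta_k u$. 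Using Corollary~\ref{DX} to identify $\mathcal{D}^{m,p}_\infty(\alpha)=X^{m,p}_\infty(\beta_0,\dots,\beta_m)$ with $\beta_\ell=\alpha-(m-\ell)p$, on $[k,2k]$ one has $r^\alpha=r^{jp}r^{\beta_{m-j}}\le (2k)^{jp}r^{\beta_{m-j}}$, so each Leibniz cross term $\eta_k^{(j)}u^{(m-j)}$ ($j\ge 1$) is bounded in $L^p_\alpha$ by a constant multiple of $\|u^{(m-j)}\|_{L^p_{\beta_{m-j}}([k,2k])}$, which tends to $0$ as $k\to\infty$. Together with dominated convergence for the main term $\eta_k u^{(m)}$, this gives $w_k\to u$ in both the Navier norm and $L^{p^*}_\theta$, hence $J(w_k)\to J(u)$. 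Finally, $(w_k)_\epsilon$ with $\epsilon=R/(2k)$ has support in $[0,R)$ and thus defines an element of $D_{0,R}(\alpha)$; by Lemma~\ref{remark dilation invariance}, $J((w_k)_\epsilon)=J(w_k)\to J(u)$, producing the desired competitors for $\mathcal{S}(R)$.

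The principal obstacle is the truncation step: the completion definition of $\mathcal{D}^{m,p}_\infty(\alpha)$ does not immediately supply density of compactly supported functions, and controlling the Leibniz cross terms on the annulus $[k,2k]$ requires the full chain of Hardy-type estimates furnished by Corollary~\ref{DX}.
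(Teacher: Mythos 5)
Your first paragraph --- dilation invariance between two finite radii --- is the paper's entire proof of this lemma. The paper fixes a finite $R>0$, uses Lemma~\ref{remark dilation invariance} to show $\mathcal{S}(m,p,\alpha,\theta,\epsilon R)\le\mathcal{S}(m,p,\alpha,\theta,R)$ and the reverse for every $\epsilon>0$, and concludes $\mathcal{S}(R)=\mathcal{S}(\epsilon R)$; nothing more.

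Your second and third paragraphs go beyond the lemma as the paper intends it. The phrase ``independent of $R>0$'' here concerns only finite radii; the paper neither states nor proves $\mathcal{S}(R)=\mathcal{S}(\infty)$, and it does not need this identity anywhere. In the proof of Lemma~\ref{a0-prop2} the functions $hu_k$, although compactly supported, are applied to Theorem~\ref{c0-t2} as elements of $\mathcal{D}^{m,2}_\infty(\alpha)$, so the constant appearing there is already $\mathcal{S}(\infty)$; the finite-$R$ constant never enters. That said, your additional arguments are essentially sound: the zero-extension step correctly gives an isometric inclusion $\mathcal{D}^{m,p}_R(\alpha)\hookrightarrow\mathcal{D}^{m,p}_\infty(\alpha)$ (the vanishing of $u^{(j)}$ at $r=R$ for $j\le m-1$ and the integrability of $u^{(m)}$ near $R$ make the extended $(m-1)$-st derivative locally absolutely continuous across $R$), and the truncation-plus-dilation step, with Corollary~\ref{DX} supplying the weighted bounds that kill the Leibniz cross terms on $[k,2k]$, correctly produces competitors in $D_{0,R}(\alpha)$ with Rayleigh quotient converging to $J(u)$. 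So you have proved a stronger statement, at the cost of a substantially longer argument, while the paper's proof is just your opening observation.
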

\begin{proof}
Fix $R>0$. For any $u \in \mathcal{D}^{m,p}_{R}(\alpha)$ and $\epsilon>0$, from Lemma~\ref{remark dilation invariance}  we can take $u_{\epsilon}\in \mathcal{D}^{m,p}_{\epsilon R}(\alpha)$ given by
 \begin{equation}\nonumber
     u_{\epsilon}(r)=\epsilon^{-\frac{\theta+1}{p^*}} u(\epsilon^{-1}r).
 \end{equation}
 Also, if $\|u\|_{L^{p^*}_{\theta}}=1$ then $\|u_{\epsilon}\|_{L^{p^*}_{\theta}}=1$. Thus, from Lemma~\ref{remark dilation invariance} 
 \begin{equation}\nonumber
 \begin{aligned}
  \mathcal{S}(m,p,\alpha, \theta, \epsilon R) & \le  \int_{0}^{\epsilon R}|\nabla_{\alpha}^{m} u_{\epsilon}|^{p}r^{\alpha}dr=\int_{0}^{ R}|\nabla_{\alpha}^{m} u|^{p}r^{\alpha}dr.
  \end{aligned} 
\end{equation}
Since $u \in \mathcal{D}^{m,p}_{R}(\alpha)$ with  $\|u\|_{L^{p^*}_{\theta}}=1$ has been taken arbitrary,  the above inequality implies  
$$\mathcal{S}(m,p,\alpha, \theta, \epsilon R) \leq \mathcal{S}(m,p,\alpha, \theta,R)$$ 
for any $R>0$ and $\epsilon>0$. On the other hand, for any $v \in \mathcal{D}^{m,p}_{\epsilon R}(\alpha),\; \epsilon>0$ we can choose  $v_{\epsilon} \in \mathcal{D}^{m,p}_{R}(\alpha)$ given by
\begin{equation}\nonumber
     v_{\epsilon}(r)=\epsilon^{\frac{\theta+1}{p^*}} v(\epsilon r).
 \end{equation}
 Also,  $\|v\|_{L^{p^*}_{\theta}}=1$ forces $\|v_{\epsilon}\|_{L^{p^*}_{\theta}}=1$. Hence, 
\begin{equation}\nonumber
 \begin{aligned}
  \mathcal{S}(m,p,\alpha, \theta, R) & \le  \int_{0}^{R}|\nabla_{\alpha}^{m} v_{\epsilon}|^{p}r^{\alpha}dr=\int_{0}^{ \epsilon R}|\nabla_{\alpha}^{m} v|^{p}r^{\alpha}dr.
  \end{aligned} 
\end{equation}
It follows that 
$$\mathcal{S}(m,p,\alpha, \theta, R) \leq \mathcal{S}(m,p,\alpha, \theta,\epsilon R).$$ 
Consequently
$$\mathcal{S}(m,p,\alpha, \theta, R) = \mathcal{S}(m,p,\alpha, \theta,\epsilon R),\;\;\mbox{for any}\;\; \epsilon>0$$
which proves the result.
\end{proof}
\begin{lemma} Let $h \in C_{c}^{\infty}(0, \infty)$ and $\varphi \in \mathcal{D}^{m,2}_{R}(\alpha)$, with $0<R\leq \infty$, then
\begin{equation}\label{claim1}
    \nabla^{m}_{\alpha}(h \varphi) = h \nabla^{m}_{\alpha} \varphi + F_{\varphi},
\end{equation}
where $F_{\varphi}=F_{\varphi}(r)$ is a linear combination of derivatives of $\varphi$ with order strictly less than $m$ involving the derivatives of $h$ with order less than or equal to $m$. In particular, if $u_k \rightharpoonup 0$ in $\mathcal{D}^{m,2}_{\infty}(\alpha)$, then
\begin{equation}\label{claim2}
    \lim_{k \to \infty} \int_{\mathrm{supp} (h)}|\nabla^{m}_{\alpha} (hu_k)|^{2} r^{\alpha} \, dr
= \lim_{k \to \infty} \int_{\mathrm{supp} (h)} h^{2}|\nabla^{m}_{\alpha} u_k|^{2} r^{\alpha} \, dr.
\end{equation}
\end{lemma}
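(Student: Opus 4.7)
The plan is to establish \eqref{claim1} by direct computation based on the Leibniz rule and then deduce \eqref{claim2} from a compactness argument valid on the (compact) support of $h$.

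For \eqref{claim1}, I would argue by induction on $m$, exploiting the explicit form $\Delta_\alpha v = v'' + \frac{\alpha}{r}v'$. A single application to a product gives
\begin{equation*}
\Delta_\alpha(h\varphi) = h\,\Delta_\alpha \varphi + 2h'\varphi' + \Bigl(h'' + \tfrac{\alpha}{r}h'\Bigr)\varphi,
\end{equation*}
so the extra terms involve $\varphi$ and $\varphi'$ with coefficients that are smooth combinations of $h$, $h'$, $h''$ and $1/r$. Iterating the Leibniz rule in the even case $m=2k$ shows that $\Delta_\alpha^{k}(h\varphi) = h\,\Delta_\alpha^{k}\varphi + F_\varphi$, where $F_\varphi$ is a linear combination of $\varphi, \varphi', \ldots, \varphi^{(m-1)}$ with coefficients built from $h,h',\ldots,h^{(m)}$ and negative powers of $r$; for $m$ odd, one differentiates this identity once more and repeats the Leibniz expansion. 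In all cases $F_\varphi$ only involves derivatives of $\varphi$ of order strictly less than $m$.

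For \eqref{claim2}, set $K := \supp(h) \subset (0,\infty)$, which is compact; in particular there exist $0<a<b<\infty$ with $K\subset [a,b]$, so the weight $r^\alpha$ is bounded above and below by positive constants on $K$, and every $r^{-j}$ is bounded on $K$ as well. Since $u_k \rightharpoonup 0$ in $\mathcal{D}^{m,2}_\infty(\alpha)$, the sequence $\{u_k\}$ is bounded there, hence, by Corollary~\ref{DX} and the compactness of the lower-order embeddings of the weighted Sobolev space $X^{m,2}_L(\beta_0,\ldots,\beta_m)$ on bounded intervals (cf.\ the embeddings \eqref{Xe77} with $q<p^*$ applied after an extension via Lemma~\ref{c0-lema de extensao} to a sufficiently large $L$), one obtains $u_k^{(j)} \to 0$ strongly in $L^2(K)$ for every $0\le j\le m-1$.

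Combining this with the structure of $F_{u_k}$ described above and the boundedness of the weights and of $1/r$ on $K$, it follows that $F_{u_k}\to 0$ in $L^2_\alpha(K)$. On the other hand $\|h\,\nabla^m_\alpha u_k\|_{L^2_\alpha(K)} \le \|h\|_\infty \|\nabla^m_\alpha u_k\|_{L^2_\alpha}$ stays bounded. Expanding
\begin{equation*}
\int_K |\nabla^m_\alpha(h u_k)|^2 r^\alpha\,dr = \int_K h^2 |\nabla^m_\alpha u_k|^2 r^\alpha\,dr + 2\int_K h\, F_{u_k}\,\nabla^m_\alpha u_k\, r^\alpha\,dr + \int_K |F_{u_k}|^2 r^\alpha\,dr,
\end{equation*}
and applying Cauchy--Schwarz to the cross term yields \eqref{claim2}.

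The main obstacle is the compactness step that upgrades weak convergence in $\mathcal{D}^{m,2}_\infty(\alpha)$ to strong $L^2$ convergence of derivatives of order $<m$ on $K$. This is resolved by passing to a bounded interval via the extension operator of Lemma~\ref{c0-lema de extensao} and invoking the compact lower-order embeddings; the remainder of the argument is then a routine Leibniz-rule computation together with Cauchy--Schwarz.
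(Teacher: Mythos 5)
Your proof of \eqref{claim1} matches the paper's: the Leibniz rule combined with an induction on $k$ for $m=2k$ (and once more differentiation for $m=2k+1$). For \eqref{claim2} you also identify the right strategy --- show $F_{u_k} \to 0$ in $L^2_\alpha(\supp(h))$, expand the square, and kill the cross term by Cauchy--Schwarz --- but the compactness step is not actually delivered by the lemmas you invoke. The compact embedding \eqref{Xe77} with $q<p^*$, even after an extension via Lemma~\ref{c0-lema de extensao}, only gives strong convergence of $u_k$ itself in $L^q_\theta$; it does \emph{not} yield strong $L^2(K)$ convergence of the derivatives $u_k^{(j)}$ for $1\le j\le m-1$, which is precisely what is required since $F_{u_k}$ is built from those derivatives.

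The paper bridges this gap with a mechanism you are missing: the map $A_j\colon u \mapsto u^{(j)}$ is a continuous (in fact isometric) linear operator from $\mathcal{D}^{m,2}_\infty(\alpha)$ into $\mathcal{D}^{m-j,2}_\infty(\alpha)$, so $u_k \rightharpoonup 0$ forces $u_k^{(j)} \rightharpoonup 0$ weakly in $\mathcal{D}^{m-j,2}_\infty(\alpha)=X^{m-j,2}_\infty(\beta_0,\dots,\beta_{m-j})$. One then restricts to a bounded interval $(0,R)\supset\supp(h)$ --- restriction, not extension, is what is needed, and it is obviously continuous into $W^{m-j,2}_R$ --- and applies the compact embedding $W^{m-j,2}_R(\beta_0,\dots,\beta_{m-j}) \hookrightarrow L^2_\alpha(0,R)$ of \cite[Theorem~1.1]{DoLuHa} to get $u_k^{(j)} \to 0$ strongly in $L^2_\alpha(0,R)$ for each $j\le m-1$. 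This per-derivative reduction to a Sobolev space of order $m-j$ is the step your write-up asserts but does not justify; without it, the assertion that all $u_k^{(j)}$ with $j\le m-1$ converge strongly on $K$ does not follow from \eqref{Xe77} and the extension operator alone.
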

\begin{proof}
We proceed by induction argument. First we analyze the case  $m=2k$. If $k=1$, it is easy to verify that
\begin{align*}
    \nabla^{2}_{\alpha}(h\varphi)&= h\Delta_{\alpha}\varphi+ \varphi\Delta_{\alpha}h+2\varphi^{\prime}h^{\prime}=h\nabla^{2}_{\alpha}\varphi + \varphi\Delta_{\alpha}h+2\varphi^{\prime}h^{\prime}
\end{align*}
and, thus we can take $F_{\varphi} = 2h'\varphi' + \varphi \Delta_{\alpha}h$.  Assume that \eqref{claim1} holds  on $\mathcal{D}^{2j, 2}_{R}(\alpha)$ for any integer number $j$, $1\le j<k$. That is,  for $h \in C_{c}^{\infty}(0, \infty)$ and $\varphi \in \mathcal{D}^{2j,2}_{R}(\alpha)$
we have
\begin{equation}\label{h-inductionG}
    \nabla^{2j}_{\alpha}(h \varphi) = h \nabla^{2j}_{\alpha} \varphi + G_{\varphi},
\end{equation}
where $G_{\varphi}$ is a linear combination of derivatives of $\varphi$ with order strictly less than $2j$, for  $1\le j<k$. Let $u\in \mathcal{D}^{2k, 2}_{R}(\alpha)$ and $h \in C_{c}^{\infty}(0, \infty)$. Then, from \eqref{h-inductionG}
\begin{align*}
    \Delta^{k}_{\alpha}(h \varphi) &=\Delta_{\alpha}[\Delta^{k-1}_{\alpha}(h \varphi)] 
=\Delta_{\alpha}[h \Delta^{k-1}_{\alpha} \varphi + G_{\varphi}] \\
&= \Delta_{\alpha}(h \Delta^{k-1}_{\alpha} \varphi) + \Delta_{\alpha}(G_{\varphi}) 
\end{align*}
where $G_{\varphi}$ is a linear combination of derivatives of $\varphi$ with order strictly less than $2k-2$. It follows that
\begin{eqnarray*}
\Delta^{k}_{\alpha}(h \varphi) &=& h \Delta^{k}_{\alpha} \varphi + 2 h' (\Delta^{k-1}_{\alpha} \varphi)'+ (\Delta_{\alpha}h)\Delta^{k-1}_{\alpha} \varphi+ \Delta_{\alpha}(G_{\varphi}). 
\end{eqnarray*}
Taking into account the expansions in Lemma~\ref{c0-lema3}, from the above identity we can see that \eqref{claim1} holds for $j=k$ with $$F_{\varphi} =2 h' (\Delta^{k-1}_{\alpha} \varphi)'+ (\Delta_{\alpha}h)\Delta^{k-1}_{\alpha} \varphi+ \Delta_{\alpha}(G_{\varphi}).$$
Similarly, we can see that \eqref{claim1} holds for the case $m=2k+1$.
 
Next, we will prove \eqref{claim2}. From \eqref{claim1}, we obtain
$$\|\nabla^{m}_{\alpha}(h u_k) - h \nabla^{m}_{\alpha} u_{k}\|^{2}_{L^{2}_{\alpha}(\mathrm{supp}(h))} = \int_{\mathrm{supp}(h)} |F_{u_{k}}|^{2} r^{\alpha} \, dr,$$
for any $h \in C_{c}^{\infty}(0, \infty)$.
It is enough to show that $F_{u_k} \to 0$ em $L^{2}_{\alpha}(\mathrm{supp} (h))$.
Indeed, for any $l=0, 1, \cdots, m-1$, we define $A_{l} : \mathcal{D}^{m,2}_{\infty}(\alpha) \to \mathcal{D}^{m-l,2}_{\infty}(\alpha)$ by $A_{l}(u) = u^{(l)}$. Of course, $A_{l}$ is a linear operator satisfying 
$$\|A_{l}(u)\|_{\mathcal{D}^{m-l,2}_{\infty}(\alpha)} = \|[A_{l}(u)]^{(m-l)}\|_{L^{2}_{\alpha}(0, \infty)} = \|u^{(m)}\|_{L^{2}_{\alpha}(0, \infty)} = \|u\|_{\mathcal{D}^{m,2}_{\infty}(\alpha)}.$$
By the continuity of  the linear operator $A_{l}$ and  $u_{k} \rightharpoonup 0$ in $\mathcal{D}^{m,2}_{\infty}(\alpha)$, we get
$u_{k}^{(l)} \rightharpoonup 0$ in $\mathcal{D}^{m-l,2}_{\infty}(\alpha)$. From \eqref{DX-Identity},  $ \mathcal{D}^{m-l,2}_{\infty}(\alpha) = X_{\infty}^{m-l,2}(\beta_{0}, \cdots, \beta_{m-l})$, with $\beta_{j} = \alpha - (m-l-j)p$. Then, $u^{(l)}_{k} \in X_{\infty}^{m-l,2}(\beta_{0}, \cdots, \beta_{m-l})$ and $ u^{(l)}_{k} \rightharpoonup 0$ in $X_{\infty}^{m-l,2}(\beta_{0}, \cdots, \beta_{m-l})$. 
Note that the restriction operator $u \to u_{|_{(0,R)}}$ is continuous from $X_{\infty}^{m-l,2}(\beta_{0}, \cdots, \beta_{m-l})$ into $W_{R}^{m-l,2}(\beta_{0}, \cdots, \beta_{m-l})$, where $R>0$ such that $\mathrm{supp}\, (h) \subseteq (0,R)$. Thus,   $ u^{(l)}_{k} \rightharpoonup 0$ in  $W_{R}^{m-l,2}(\beta_{0}, \cdots, \beta_{m-l})$. Finally, up to a subsequence, the compact embedding $W^{m-l,2}_{R}(\beta_{0}, \cdots, \beta_{m-l}) \hookrightarrow L^{2}_{\alpha}(0,R)$ (see, \cite[Theorem~1.1]{DoLuHa}) implies that
$$u_{k}^{(l)} \to 0 \quad\text{in}\quad L^{2}_{\alpha}(0,R), \quad \forall\,\, l = 0,1, \cdots , m-1.$$
Then, 
\begin{eqnarray*}
\int_{\mathrm{supp}\, (h)} |F_{u_{k}}|^{2} r^{\alpha} \, dr &\leq& \int_{0}^{R} |F_{u_{k}}|^{2} r^{\alpha} \, dr\\
&\leq& C \sum^{m-1}_{l=0} \int_{0}^{R} |u_{k}^{(l)}|^{2} r^{\alpha} \, dr \to 0
\end{eqnarray*}
which completes the proof.
\end{proof}
\noindent Let $\mathcal{M}(0,\infty)$ the space of Radon measures on $(0,\infty)$ which can be identiﬁed with the dual of space $C_0(0,\infty)$, the completion of $C_c(0,\infty)$ under the norm $\|\varphi\|_{\infty}=\max_{r\in (0,\infty)}|\varphi(r)|$, where $C_c(0,\infty)$ is the space of continuous functions with compact support in $(0,\infty)$. 
\begin{lemma}\label{a0-prop2}
Let $(u_{k})$ be a sequence in $\mathcal{D}^{m,2}_{\infty}(\alpha)$ satisfying the conditions:
\begin{itemize}
    \item [$(i)$] $u_k \rightharpoonup u$ in $\displaystyle\mathcal{D}^{m,2}_{\infty}(\alpha)$
    
    \item [$(ii)$] $\displaystyle|\nabla^{m}_{\alpha} (u_{k} - u)|^{2} r^{\alpha} dr 
    \rightharpoonup \mu$ in $\displaystyle\mathcal{M}(0,\infty)$
    
    \item [$(iii)$] $\displaystyle|u_{k} - u|^{2^{*}} r^{\theta} dr \rightharpoonup \zeta$ in $\mathcal{M}(0,\infty)$
    
    \item [$(iv)$] $u_{k}(r) \to u(r)$ a.e. on $(0,\infty)$
\end{itemize}
and define
$$\mu_{\infty} = \lim_{L \to \infty} \left(\limsup_{k \to \infty} \int_{L}^{\infty} |\nabla_{\alpha}^{m} u_k|^{2} r^{\alpha} \,dr\right)$$
and 
$$\zeta_{\infty} = \lim_{L \to \infty} \left(\limsup_{k \to \infty} \int_{L}^{\infty} |u_k|^{2^{*}} r^{\theta} \,dr\right).$$
Then
\begin{equation}\label{a0-e1}
    \|\zeta\|^{2/2^{*}} \leq \mathcal{S}^{-1}\|\mu\|,
\end{equation}
\begin{equation}\label{a0-e2}
    (\zeta_{\infty})^{2/2^{*}} \leq \mathcal{S}^{-1} \mu_{\infty},
\end{equation}
\begin{equation}\label{a0-e3}
    \limsup_{k \to \infty} \|\nabla^{m}_{\alpha} u_k\|^{2}_{L^{2}_{\alpha}} = \|\nabla^{m}_{\alpha} u\|^{2}_{L^{2}_{\alpha}} + \|\mu\| + \mu_{\infty},
\end{equation}
\begin{equation}\label{a0-e4}
    \limsup_{k \to \infty} \|u_k\|^{2^{*}}_{L^{2^{*}}_{\theta}} = \| u\|^{2^{*}}_{L^{2^{*}}_{\theta}} + \|\zeta\| + \zeta_{\infty}.
\end{equation}
Furthermore, if $u=0$ and $ \|\zeta\|^{2/2^{*}} = \mathcal{S}^{-1}\|\mu\|$, then $\mu$ and $\zeta$ are concentrated at a single point.
\end{lemma}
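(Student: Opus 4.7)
The plan is to follow the concentration-compactness principle of P.L.\ Lions, adapted to (a) the weighted differentials $r^{\alpha}dr$ and $r^{\theta}dr$, (b) the higher-order operator $\nabla^{m}_{\alpha}$ for which the product rule is encoded in \eqref{claim1}--\eqref{claim2}, and (c) the unbounded base interval $(0,\infty)$, which forces one to keep track of possible mass escaping to infinity. After the change of functions $v_{k}:=u_{k}-u$, hypotheses $(i)$--$(iv)$ rewrite as $v_{k}\rightharpoonup 0$ in $\mathcal{D}^{m,2}_{\infty}(\alpha)$, $v_{k}\to 0$ a.e., $|\nabla^{m}_{\alpha}v_{k}|^{2}r^{\alpha}dr\rightharpoonup \mu$ and $|v_{k}|^{2^{*}}r^{\theta}dr\rightharpoonup \zeta$ in $\mathcal{M}(0,\infty)$.

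To establish \eqref{a0-e1}, I would insert $hv_{k}$ into the Sobolev inequality \eqref{c0-t2-eq} for an arbitrary $h\in C^{\infty}_{c}(0,\infty)$; the gradient side tends to $\mathcal{S}^{-1/2}(\int h^{2}d\mu)^{1/2}$ thanks to \eqref{claim2}, while the critical side converges to $(\int |h|^{2^{*}}d\zeta)^{1/2^{*}}$ by weak convergence of measures against $|h|^{2^{*}}\in C_{c}(0,\infty)$. This produces the reverse H\"{o}lder inequality $\bigl(\int |h|^{2^{*}}d\zeta\bigr)^{1/2^{*}}\le \mathcal{S}^{-1/2}\bigl(\int h^{2}d\mu\bigr)^{1/2}$, and monotone convergence along any sequence $h_{n}\in C_{c}^{\infty}(0,\infty)$ with $0\le h_{n}\uparrow 1$ delivers \eqref{a0-e1}. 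For \eqref{a0-e2} I would instead test \eqref{c0-t2-eq} against $\psi_{L}u_{k}$, with $\psi_{L}(r)=\psi(r/L)$ smooth, equal to $0$ on $[0,L]$ and to $1$ on $[2L,\infty)$. Writing $\nabla^{m}_{\alpha}(\psi_{L}u_{k})=\psi_{L}\nabla^{m}_{\alpha}u_{k}+F_{L,k}$ via \eqref{claim1}, the error $F_{L,k}$ is supported on $[L,2L]$ and satisfies $|F_{L,k}|\le C\sum_{j=1}^{m}|u_{k}^{(m-j)}|/r^{j}$, thanks to the elementary bound $L^{-j}\le 2^{j}r^{-j}$ on that interval together with the weighted structure in \eqref{claim1}. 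Passing first to $\limsup_{k}$ (where, on $[L,2L]$, the compact embedding of Theorem \cite[1.1]{DoLuHa} upgrades weak convergence to strong convergence of $u_{k}^{(m-j)}$ to $u^{(m-j)}$ in the weighted $L^{2}_{\alpha-2j}$ norm) and then to $\lim_{L\to\infty}$ (where the tail $\int_{L}^{2L}|u^{(m-j)}|^{2}r^{\alpha-2j}dr$ vanishes, since the whole integral on $(0,\infty)$ is finite by \eqref{transition-jump}) kills $F_{L,k}$, leaving $\zeta_{\infty}^{2/2^{*}}\le \mathcal{S}^{-1}\mu_{\infty}$.

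The decompositions \eqref{a0-e3} and \eqref{a0-e4} follow from a localized Brezis-Lieb argument: for a smooth cut-off $\eta_{L}\in C_{c}^{\infty}(0,\infty)$ exhausting $(0,\infty)$, Brezis-Lieb applied on $\mathrm{supp}(\eta_{L})$ (legitimate since $v_{k}\to 0$ a.e.\ and stays bounded in $L^{2^{*}}_{\theta}$) yields $\int\eta_{L}|u_{k}|^{2^{*}}r^{\theta}dr=\int\eta_{L}|u|^{2^{*}}r^{\theta}dr+\int\eta_{L}|v_{k}|^{2^{*}}r^{\theta}dr+o(1)$; taking $\limsup_{k}$ and then $\lim_{L\to\infty}$, the first two terms converge to $\|u\|_{L^{2^{*}}_{\theta}}^{2^{*}}$ and $\|\zeta\|$ respectively, while the complementary piece $\int(1-\eta_{L})|u_{k}|^{2^{*}}r^{\theta}dr$ captures $\zeta_{\infty}$ by its very definition. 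This yields \eqref{a0-e4}; identity \eqref{a0-e3} is proved in parallel, using that the strong convergence of $u_{k}^{(j)}$ for $j<m$ on bounded intervals reduces the Brezis-Lieb decomposition to the single top-order difference. For the final assertion, assume $u=0$ and $\|\zeta\|^{2/2^{*}}=\mathcal{S}^{-1}\|\mu\|$; the reverse H\"{o}lder inequality of Step 1 extended to Borel sets gives $\zeta(A)^{2/2^{*}}\le \mathcal{S}^{-1}\mu(A)$ for every Borel $A\subset(0,\infty)$. If $\zeta$ were supported on two or more distinct points, one could split $(0,\infty)=A\sqcup B$ with $\zeta(A),\zeta(B)>0$; the strict concavity of $t\mapsto t^{2/2^{*}}$ (since $2/2^{*}<1$) would then give
\[\|\zeta\|^{2/2^{*}}<\zeta(A)^{2/2^{*}}+\zeta(B)^{2/2^{*}}\le \mathcal{S}^{-1}(\mu(A)+\mu(B))\le \mathcal{S}^{-1}\|\mu\|=\|\zeta\|^{2/2^{*}},\]
a contradiction. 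Hence $\zeta=\zeta_{0}\delta_{r_{0}}$ for some $r_{0}\in(0,\infty)$, and chasing equality through the reverse H\"{o}lder inequality forces $\mu=\mathcal{S}\zeta_{0}^{2/2^{*}}\delta_{r_{0}}$.

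The most delicate step is the control of the cut-off error $F_{L,k}$ entering the proof of \eqref{a0-e2}; it is the higher-order, weighted counterpart of Lions' mass-at-infinity argument, and it simultaneously exploits (i) the compact embedding on bounded intervals to exchange $u_{k}^{(m-j)}$ for $u^{(m-j)}$ as $k\to\infty$, and (ii) the tail integrability of lower-order derivatives furnished by \eqref{transition-jump} to annihilate the error as $L\to\infty$. Once this is in place, the remaining pieces are standard manipulations of weak limits of Radon measures combined with the Brezis-Lieb dichotomy.
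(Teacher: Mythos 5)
Your proposal follows the paper's concentration--compactness strategy essentially step for step on \eqref{a0-e1}--\eqref{a0-e4}: the reverse H\"{o}lder inequality extracted from the Sobolev inequality via the commutator identity \eqref{claim1}--\eqref{claim2}, the cut-off family $\psi_{L}$ to capture mass escaping to infinity with the commutator error annihilated by the compact embedding on bounded intervals and the tail smallness of $u^{(j)}$, and the Brezis--Lieb/parallelogram splitting of the norms (the paper phrases \eqref{a0-e3} via the Hilbert-space identity \eqref{a0-e9} rather than Brezis--Lieb, but for an $L^{2}$-norm these are the same computation). The one place where you take a genuinely different route is the final concentration assertion. You extend the reverse H\"{o}lder inequality to Borel sets, $\zeta(A)^{2/2^{*}}\le \mathcal{S}^{-1}\mu(A)$, and then derive a contradiction from strict subadditivity of $t\mapsto t^{2/2^{*}}$ applied to a two-piece partition of $(0,\infty)$. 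The paper instead applies ordinary H\"{o}lder to $\mu$ (giving \eqref{a0-e7}), combines it with \eqref{a0-e5} to obtain the pointwise domination $\zeta\le \mathcal{S}^{-2^{*}/2}\|\mu\|^{(2m+\theta-\alpha)/(\alpha-2m+1)}\mu$, upgrades this to equality from the saturation $\|\zeta\|^{2/2^{*}}=\mathcal{S}^{-1}\|\mu\|$, and then feeds the equality back into \eqref{a0-e5} to deduce $\mu(A)\ge\|\mu\|$ for every open $A$, forcing a Dirac mass. Your argument is shorter and more elementary; the paper's yields the explicit relation \eqref{a0-e8} and the quantitative bound \eqref{mA>mnorm}, which are reused verbatim in the proof of Theorem~\ref{a0-teo1} to rule out $\|\zeta\|=1$ (see \eqref{zA>1}). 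Both are correct; if you adopt your version, you would simply need to re-derive the measure identity \eqref{a0-e8} (which your argument also implies, once $\mu$ and $\zeta$ are known to be Dirac masses at the same point) when it is invoked in the proof of the main theorem.
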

\begin{proof} Firstly, we suppose
$u=0$. Thus, for any  $h \in C_{c}^{\infty}(0, \infty)$,  Theorem~\ref{c0-t2} and \eqref{claim2} yield
\begin{eqnarray*}
\left(\int_{0}^{\infty} |h|^{2^{*}} \,d\zeta \right)^{2/2^{*}} &=& \lim_{k \to \infty} \left(\int_{\mathrm{supp}\,(h)} |h u_{k}|^{2^{*}} r^{\theta} \,dr \right)^{2/2^{*}}  \\
&\leq& \mathcal{S}^{-1} \lim_{k \to \infty} \int_{\mathrm{supp}\, (h)} |\nabla^{m}_{\alpha} (hu_k)|^{2} r^{\alpha} \, dr\\
&=& \mathcal{S}^{-1}\lim_{k \to \infty} \int_{\mathrm{supp}\, (h)} h^{2}|\nabla^{m}_{\alpha} u_k|^{2} r^{\alpha} \, dr.
\end{eqnarray*}
It follows that 
\begin{equation}\label{a0-e5}
    \left(\int_{0}^{\infty} |h|^{2^{*}} \,d\zeta \right)^{2/2^{*}} \leq \mathcal{S}^{-1} \int^{\infty}_{0} h^{2} \, d\mu, \quad \forall\,\, h \in C_{c}^{\infty}(0, \infty).
\end{equation}
Now, for any $L>1$, let $\psi_{L} \in C^{\infty}_{c}(0,\infty)$ with $0\leq \psi_{L} \leq 1$ such that $\psi_{L}\equiv 0$ on $(0, L)$ and $\psi_{L}\equiv 1$ on $(L+1,\infty)$. By Theorem \ref{c0-t2}, we have
\begin{align*}
&\left(\int_{0}^{\infty} |\psi_{L} u_{k}|^{2^{*}} r^{\theta} \,dr \right)^{2/2^{*}} \le \mathcal{S}^{-1} \int_{0}^{\infty} |\nabla^{m}_{\alpha}(\psi_{L} u_{k})|^{2} r^{\alpha}\, dr\\
&= \mathcal{S}^{-1} \left( \int_{L}^{L+1} |\nabla^{m}_{\alpha}(\psi_{L} u_{k})|^{2} r^{\alpha}\, dr + \int_{L+1}^{\infty} |\nabla^{m}_{\alpha}u_{k}|^{2} r^{\alpha}\, dr\right).
\end{align*}
Arguing as in \eqref{claim2}, we can show that
$$\lim_{k \to \infty} \int_{L}^{L+1} |\nabla^{m}_{\alpha}(\psi_{L} u_{k})|^{2} r^{\alpha}\, dr = \lim_{k \to \infty} \int_{L}^{L+1} \psi_{L}^{2}|\nabla^{m}_{\alpha} u_{k}|^{2} r^{\alpha}\, dr.$$
Hence 
\begin{equation}\label{a0-e6}
    \limsup_{k \to \infty} \left(\int_{0}^{\infty} |\psi_{L} u_{k}|^{2^{*}} r^{\theta} \,dr \right)^{2/2^{*}} \leq \mathcal{S}^{-1} \limsup_{k \to \infty} \int_{L}^{\infty} \psi_{L}^{2}|\nabla^{m}_{\alpha} u_{k}|^{2} r^{\alpha}\, dr.
\end{equation} 
 We claim that 
\begin{equation}\label{claim3}
\begin{aligned}
    &\zeta_{\infty} = \lim_{L \to \infty}\limsup_{k \to \infty} \int_{0}^{\infty} |\psi_{L} u_{k}|^{2^{*}} r^{\theta} \,dr\\
&\mu_{\infty} = \lim_{L \to \infty} \limsup_{k \to \infty} \int_{L}^{\infty} \psi_{L}^{2}|\nabla^{m}_{\alpha} u_{k}|^{2} r^{\alpha}\, dr.
\end{aligned}
\end{equation}
Indeed, note that
$$\int_{L+1}^{\infty} |u_{k}|^{2^{*}} r^{\theta} \,dr \leq \int_{0}^{\infty} \psi_{L}^{2^{*}} |u_{k}|^{2^{*}} r^{\theta} \,dr \leq \int_{L}^{\infty} | u_{k}|^{2^{*}} r^{\theta} \,dr$$
and
$$\int_{L+1}^{\infty} |\nabla^{m}_{\alpha} u_{k}|^{2} r^{\alpha}\, dr \leq \int_{0}^{\infty} \psi_{L}^{2}|\nabla^{m}_{\alpha} u_{k}|^{2} r^{\alpha}\, dr \leq \int_{L}^{\infty} |\nabla^{m}_{\alpha} u_{k}|^{2} r^{\alpha}\, dr.$$
Letting $k\to\infty$ and after $L\to\infty$, the two inequalities above yield \eqref{claim3}. Combining \eqref{a0-e5},  \eqref{a0-e6} and \eqref{claim3}, we obtain \eqref{a0-e1} and \eqref{a0-e2} for $u=0$.

Now, assume that the weak limit $u\in \mathcal{D}^{m,2}_{\infty}(\alpha)$ is arbitrary and set $v_k=u_k-u$. Of course we have  $v_{k} \rightharpoonup 0$ in $\mathcal{D}^{m,2}_{\infty}(\alpha)$. Moreover,   since $\mathcal{D}^{m,2}_{\infty}(\alpha)$ is a Hilbert space equipped with the inner product
$$\left< u, v \right>_{\nabla^{m}_{\alpha}} =\int_{0}^{\infty} (\nabla^{m}_{\alpha} u )(\nabla^{m}_{\alpha} v)r^{\alpha} \, dr$$
we have   $\left<u_k, u\right>_{\nabla^{m}_{\alpha}} \to \|u\|^{2}_{\nabla^{m}_{\alpha}}$ in $\mathbb{R}$. So, one has 
\begin{equation}\label{a0-e9}
    \lim_{k \to \infty} \|v_k\|^2_{\nabla^{m}_{\alpha}} = \lim_{k \to \infty}\|u_k\|^{2}_{\nabla^{m}_{\alpha}} - \|u\|^{2}_{\nabla^{m}_{\alpha}}.
\end{equation}
Thus, for any $h \in C^{\infty}_{c}(0, \infty)$, we have
\begin{align*}
&\left|\int_{0}^{\infty} h (|\nabla^{m}_{\alpha} u_k|^{2} - |\nabla^{m}_{\alpha} (u_k-u)|^{2} - |\nabla^{m}_{\alpha} u|^{2})r^{\alpha} \, dr\right|\\ 
& \le \|h\|_{\infty}\left| \|u_k\|^{2}_{\nabla^{m}_{\alpha}} - \|v_k \|^{2}_{\nabla^{m}_{\alpha}} - \|u\|^{2}_{\nabla^{m}_{\alpha}}\right|.
\end{align*}
Letting $k\to \infty$, by using \eqref{a0-e9} and $(ii)$, it follows that
\begin{equation}\label{a0-e10}
    |\nabla^{m}_{\alpha} u_k|^{2}r^{\alpha} \, dr \rightharpoonup \mu + |\nabla^{m}_{\alpha}u|^{2}r^{\alpha} \, dr \quad \text{in} \quad \mathcal{M}(0,\infty).
\end{equation}
On the other hand, the convergence $u_k \rightharpoonup u$ in $\mathcal{D}^{m,2}_{\infty}(\alpha)$ and  the embedding $\mathcal{D}^{m,2}_{\infty}(\alpha) \hookrightarrow L^{2^{*}}_{\theta}(0, \infty)$ ensure that $(u_k)$ is bounded in $L^{2^{*}}_{\theta}(0, \infty)$. Recalling $(iv)$ we are in a position to use the Brezis-Lieb Lemma to get
$$
\int_{0}^{\infty}h|u|^{2^*}r^{\theta}dr=\lim_{k\to \infty}\left(\int_{0}^{\infty}h|u_k|^{2^*}r^{\theta}dr-\int_{0}^{\infty}h|v_k|^{2^*}r^{\theta}dr\right)
$$
and consequently $(iii)$ yields
\begin{equation}\label{a0-e11}
    | u_k|^{2^{*}}r^{\theta} \, dr \rightharpoonup \zeta + |u|^{2^{*}}r^{\theta} \, dr \quad \text{in} \quad \mathcal{M}(0,\infty).
\end{equation}
By using $v_k$ instead of $u_k$ we can apply the same argument of the case $u=0$ to obtain the estimate \eqref{a0-e5} from which we obtain \eqref{a0-e1}. In order to prove \eqref{a0-e2} we  first note that 
$$\int_{L}^{\infty} |\nabla^{m}_{\alpha} v_k|^{2} r^{\alpha} \, dr = \int_{L}^{\infty} (|\nabla^{m}_{\alpha} u_k|^{2} - 2 (\nabla^{m}_{\alpha} u_k) (\nabla^{m}_{\alpha} u) + |\nabla^{m}_{\alpha} u|^{2}) r^{\alpha} \, dr$$
implies 
$$\limsup_{k \to \infty} \int_{L}^{\infty} |\nabla^{m}_{\alpha} v_k|^{2} r^{\alpha} \, dr = \limsup_{k \to \infty} \int_{L}^{\infty} |\nabla^{m}_{\alpha} u_k|^{2} r^{\alpha} \, dr - \int_{L}^{\infty} |\nabla^{m}_{\alpha} u|^{2}r^{\alpha} \,dr.$$
Thus, $\displaystyle\lim_{L \to \infty} \int_{L}^{\infty} |\nabla^{m}_{\alpha} u|^{2}  r^{\alpha}\, dr = 0$ implies
\begin{equation}\label{a0-e12}
    \lim_{L \to \infty}\limsup_{k \to \infty} \int_{L}^{\infty} |\nabla^{m}_{\alpha} v_k|^{2} r^{\alpha} \, dr = \lim_{L \to \infty}\limsup_{k \to \infty} \int_{L}^{\infty} |\nabla^{m}_{\alpha} u_k|^{2} r^{\alpha} \, dr= \mu_{\infty}.
\end{equation}
On the other hand, Brezis-Lieb Lemma yields
$$\lim_{k \to \infty} \left( \int_{L}^{\infty} |u_k|^{2^{*}} r^{\theta}\, dr- \int_{L}^{\infty}|v_k|^{2^{*}} r^{\theta}\, dr\right) = \int_{L}^{\infty} |u|^{2^{*}} r^{\theta}\, dr.$$
Since $\displaystyle\lim_{L \to \infty} \int_{L}^{\infty} |u|^{2^{*}} r^{\theta}\,dr = 0$, we also have
\begin{equation}\label{a0-e13}
    \lim_{L \to \infty} \limsup_{k \to \infty} \int_{L}^{\infty}|v_k|^{2^{*}} r^{\theta}\, dr = \lim_{L \to \infty} \limsup_{k \to \infty} \int_{L}^{\infty} |u_k|^{2^{*}} r^{\theta}\, dr =  \zeta_{\infty}.
\end{equation}
By using $v_k$ instead of $u_k$, from \eqref{a0-e12} and \eqref{a0-e13}, we can apply the same argument of the case $u=0$ to obtain \eqref{a0-e6} and \eqref{claim3} for any weak limit $u$. Thus, \eqref{a0-e2} holds.

Next, we will prove that \eqref{a0-e3} and  \eqref{a0-e4}. For any $L>1$, using
$|\nabla^{m}_{\alpha}u_k|^{2} = \psi_{L}^{2}|\nabla^{m}_{\alpha}u_k|^{2} + (1- \psi_{L}^{2})|\nabla^{m}_{\alpha}u_k|^{2}$
and \eqref{a0-e10}, we have
\begin{equation}\label{a0-e14}
\begin{aligned}
\limsup_{k \to \infty} \int_{0}^{\infty} |\nabla^{m}_{\alpha}u_k|^{2} r^{\alpha} \,\ dr &= \limsup_{k \to \infty} \int_{0}^{\infty} \psi_{L}^{2}|\nabla^{m}_{\alpha}u_k|^{2} r^{\alpha} \,\ dr + \int_{0}^{\infty}(1-\psi_{L}^{2})\,d\mu\\
& +  \int_{0}^{\infty}(1-\psi_{L}^{2})|\nabla^{m}_{\alpha}u|^{2}r^{\alpha}\,d\mu.
\end{aligned}
\end{equation}
Letting $L \to \infty$ in \eqref{a0-e14} and using Lebesgue's dominated convergence theorem, we have
\begin{align*}
\limsup_{k \to \infty} \int_{0}^{\infty} |\nabla^{m}_{\alpha}u_k|^{2} r^{\alpha} \,\ dr &= \lim_{L \to \infty}\limsup_{k \to \infty} \int_{0}^{\infty} \psi_{L}^{2}|\nabla^{m}_{\alpha}u_k|^{2} r^{\alpha} \,\ dr + \int_{0}^{\infty}\,d\mu\\
& + \int_{0}^{\infty}|\nabla^{m}_{\alpha}u|^{2}r^{\alpha}\,d\mu.
\end{align*}
By using \eqref{claim3}, we obtain \eqref{a0-e3}. To prove \eqref{a0-e4}, we can proceed analogously  with the help of \eqref{a0-e11}.

Finally, assume $u=0$ and $\|\zeta\|^{2/2^{*}} = S^{-1} \|\mu\|$. We will show that the measures $\mu$ and $\zeta$ are concentrated in a singular point. Indeed, for any $h \in C_{c}^{\infty}(0, \infty)$ the H\"{o}lder inequality implies that
\begin{eqnarray*}
\int_{0}^{\infty} h^{2}\, d\mu &\leq& \left(\int_{\mathrm{supp}(h)} d\mu \right)^{\frac{2m+\theta-\alpha}{\theta+1}}\left(\int_{0}^{\infty} |h|^{2^{*}} \, d\mu \right)^{\frac{2}{2^{*}}}.
\end{eqnarray*}
Thus, 
\begin{equation}\label{a0-e7}
    \left(\int_{0}^{\infty} h^{2}\, d\mu \right)^{\frac{1}{2}} \leq \|\mu \|^{\frac{2m+\theta-\alpha}{2(\theta+1)}}\left( \int_{0}^{\infty} |h|^{2^{*}} \, d\mu \right)^{\frac{1}{2^{*}}}.
\end{equation}
Combining \eqref{a0-e5} and \eqref{a0-e7}, we have
$$\left(\int_{0}^{\infty} |h|^{2^{*}}  d\zeta \right)^{\frac{1}{2^{*}}} \leq \mathcal{S}^{-\frac{1}{2}}\|\mu \|^{\frac{2m+\theta-\alpha}{2(\theta+1)}}\left( \int_{0}^{\infty} |h|^{2^{*}} \, d\mu \right)^{\frac{1}{2^{*}}}.$$
Therefore, $\zeta \leq \mathcal{S}^{-\frac{2^{*}}{2}} \|\mu\|^{\frac{2m + \theta-\alpha}{\alpha-2m +1}} \mu$. Since that $\|\zeta\|^{\frac{2}{2^{*}}} = \mathcal{S}^{-1} \|\mu\|$, we have
\begin{equation}\label{a0-e8}
    \zeta = \mathcal{S}^{-\frac{2^{*}}{2}} \|\mu\|^{\frac{2m + \theta-\alpha}{\alpha-2m +1}} \mu.
\end{equation}
Using \eqref{a0-e8} in \eqref{a0-e5}, we have
$$\left(\int_{0}^{\infty} h^{2} \, d\mu \right)^{\frac{1}{2}} \geq \|\mu\|^{\frac{2m+\theta-\alpha}{2(\theta+1)}}\left( \int_{0}^{\infty} |h|^{2^{*}} \, d\mu \right)^{\frac{1}{2^{*}}}.$$
In particular, for all open set $A\subset(0, \infty)$ holds
$$\mu(A)^{\frac{1}{2}} \geq \|\mu\|^{\frac{2m+\theta-\alpha}{2(\theta+1)}} \mu(A)^{\frac{1}{2^{*}}}.$$
Thus,
\begin{equation}\label{mA>mnorm}
   \mu(A) \geq \|\mu\|.
\end{equation}
It follows that $\mu$ is concentrated in a single point. From \eqref{a0-e8}, the same conclusion holds for $\zeta$.
\end{proof}
\begin{remark}\label{R4} For any $u \in \mathcal{D}^{m,p}_{\infty}(\alpha)\setminus\{0\}$  its translation 
$u^{s}(r) = u(r+s)$, with $s\geq 0$ does not satisfy
$$\|  u^{s} \|_{L_{\theta}^{p^{*}}} =  \|  u \|_{L_{\theta}^{p^{*}}} \quad \text{and}\quad \| \nabla^{m}_{\alpha} u^{s} \|_{L_{\alpha}^{p}} = \| \nabla^{m}_{\alpha} u \|_{L_{\alpha}^{p}},\;\; \forall s>0.$$ 
Thus, the problem \eqref{c0-e40} is not invariant by translations. In particular, we can not use the re-scaled function family $v^{s,\lambda}(r) = \lambda^{\frac{\theta+1}{2^{*}}} u(\lambda r+s)$, $\lambda, s>0$
to generate suitable minimizing sequences for $\mathcal{S}$. Fortunately, we can do it with $s=0$, see Remark~\ref{remark dilation invariance}.
\end{remark}

\begin{proof}[Proof of Theorem \ref{a0-teo1}:] Let $(u_{k})\subset D^{m,2}_{\infty}(\alpha)$ be a minimizing sequence for $\mathcal{S}$ such that 
\begin{equation}\label{u-min}
    \|u_k\|_{L^{2^{*}}_{\theta}}=1\;\;\mbox{and}\;\; \mathcal{S}(m,2,\alpha, \theta, \infty)=\lim_{k\to\infty}\|\nabla^{m}_{\alpha} u_{k}\|^{2}_{L^{2}_{\alpha}}.
\end{equation}
For $k\in \mathbb{N}$, let $Q_{k}: (0,\infty) \to \mathbb{R}$ be given by
$$Q_{k}(t) = \int^{t}_{0} |u_{k}(r)|^{2^{*}} r^{\theta} \, dr.$$
For each $k$,  $Q_k$ is a continuous function and satisfies
$$\lim_{t \to 0^{+}} Q_{k}(t) = 0, \quad \lim_{t \to \infty} Q_{k}(t) = 1 \quad \text{and} \quad Q_{k}(t_{1})\leq Q_{k}(t_{2}), \quad\text{if} \quad t_{1} \leq t_{2}.$$
Therefore,  there is a real sequence $(t_k)\subset(0,\infty)$ such that
\begin{equation}\label{a0-e15}
    Q_{k}(t_{k}) = \frac{1}{2}.
\end{equation}
Now, let us define $v_{k}(r) = t_{k}^{\frac{\theta+1}{2^{*}}} u_{k}(rt_{k})$. From Remark~\ref{remark dilation invariance}, we have
\begin{equation}\label{v-min}
    \|v_{k}\|_{L^{2^{*}}_{\theta}} = \|u_{k}\|_{L^{2^{*}}_{\theta}}=1 \quad\text{and}\quad \|\nabla^{m}_{\alpha} v_{k}\|_{L^{2}_{\alpha}} = \|\nabla^{m}_{\alpha} u_{k}\|_{L^{2}_{\alpha}}.
\end{equation}
In addition, by using the change of variable $z= rt_{k}$ and \eqref{a0-e15}, we have 
\begin{equation}\label{a0-e17}
    \int_{0}^{1} |v_{k}(r)|^{2^{*}}r^{\theta}\, dr = \int_{0}^{t_{k}} |u_{k}(r)|^{2^{*}}r^{\theta}\, dr=\frac{1}{2}.
\end{equation}
Since that $(v_{k})$ is a bounded sequence in  the Hilbert space $\mathcal{D}^{m,2}_{\infty}(\alpha)$, up to a subsequence, we may assume 
\begin{equation}\label{list-limit}
   \begin{aligned}
&v_{k} \rightharpoonup v & &\mbox{in}\; \mathcal{D}^{m,2}_{\infty}
 (\alpha)\\   
&|\nabla^{m}_{\alpha}(v_k-v)|^{2}r^{\alpha} \rightharpoonup \mu &  & \mbox{in}\;  \mathcal{M}(0,\infty)\\
&|v_k-v|^{2^{*}}r^{\theta} \rightharpoonup \zeta &  &\mbox{in}\;  \mathcal{M}(0,\infty)\\
& v_k(r)\rightarrow v(r) & &\mbox{a.e on}\; (0,\infty).
\end{aligned} 
\end{equation}
Combining Lemma~\ref{a0-prop2} with \eqref{u-min} and \eqref{v-min} we can write
\begin{equation}\label{a0-e18}
    \mathcal{S} = \|\nabla^{m}_{\alpha} v\|^{2}_{L^{2}_{\alpha}} + \|\mu\| + \mu_{\infty} 
\end{equation}
and
\begin{equation}\label{a0-e19}
    1 = \|v\|^{2^{*}}_{L^{2^{*}}_{\theta}} +\|\zeta\|+ \zeta_{\infty}.
\end{equation}
Using \eqref{c0-t2-eq}, \eqref{a0-e1} and  \eqref{a0-e2}, we have
\begin{equation}\label{a0-e20+}
\mathcal{S}\ge\mathcal{S}\left(\left(\|v\|^{2^{*}}_{L^{2^{*}}_{\theta}}\right)^{\frac{2}{2^{*}}} + \|\zeta\|^{\frac{2}{2^{*}}} + (\zeta_{\infty})^{\frac{2}{2^{*}}}\right).
\end{equation}
It follows from \eqref{a0-e19} and \eqref{a0-e20+} that $\|v\|^{2^{*}}_{L^{2^{*}}_{\theta}}, \|\zeta\|$ and $\zeta_{\infty}$  are equal either to $0$ or to $1$. From \eqref{u-min}, \eqref{v-min} and \eqref{list-limit}, we have 
$$\|\nabla_{\alpha}^{m} v\|^{2}_{L_{\alpha}^{2}} \leq \lim_{k \to \infty} \|\nabla_{\alpha}^{m} v_{k}\|^{2}_{L_{\alpha}^{2}} = \mathcal{S}.$$
Then, $v$ is a minimizer provided that  $\|v\|^{2^{*}}_{L^{2^{*}}_{\theta}}=1$. From \eqref{a0-e19}, it remains to prove that $$\zeta_{\infty}=\|\zeta\|=0.$$
But, from \eqref{v-min} and \eqref{a0-e17}, for all $L>1$ we have
$$\int_{L}^{\infty} |v_{k}|^{2^{*}} r^{\theta} \, dr \leq \int_{1}^{\infty} |v_{k}|^{2^{*}} r^{\theta} \, dr = \frac{1}{2}.$$
So,  $\zeta_{\infty} \leq \frac{1}{2}$ and consequently  $\zeta_{\infty}=0$. By contradiction, suppose that $\|\zeta\| = 1$. Hence, \eqref{a0-e19} yields $\|v\|^{2^{*}}_{L^{2^{*}}_{\theta}} = \zeta_{\infty} = 0$ and consequently $v=0$. In addition, from \eqref{a0-e18} we have $$\|\mu\| \mathcal{S}^{-1} \leq 1 = \|\zeta\|^{\frac{2}{2^{*}}}.$$ Hence, by using \eqref{a0-e1} we obtain $\|\mu\| \mathcal{S}^{-1}  = \|\zeta\|^{\frac{2}{2^{*}}}$. From Lemma \ref{a0-prop2} (cf. \eqref{a0-e8} and \eqref{mA>mnorm}), for any open set $A\subset(0,\infty)$ we have 
\begin{equation}\label{zA>1}
  \begin{aligned}
    \zeta(A)&=\mathcal{S}^{-\frac{2^{*}}{2}} \|\mu\|^{\frac{2m + \theta-\alpha}{\alpha-2m +1}} \mu(A) \\
    & \ge \mathcal{S}^{-\frac{2^{*}}{2}} \|\mu\|^{\frac{2m + \theta-\alpha}{\alpha-2m +1}}\|\mu\|\\
    &=\mathcal{S}^{-\frac{2^{*}}{2}} \|\mu\|^{\frac{\theta+1}{\alpha-2m +1}}\\
    &=\mathcal{S}^{-\frac{2^{*}}{2}} \|\mu\|^{\frac{2^*}{2}}=\|\zeta\|=1.
\end{aligned}  
\end{equation}
But, letting $k\to\infty$ in \eqref{a0-e17} we obtain 
$$
\zeta(A_0)=\frac{1}{2}<\|\zeta\|=1, \;\mbox{with}\;\; A_0=(0,1)
$$
which contradicts \eqref{zA>1}. Thus, $\|\zeta\|=0$ and the proof is completed.  
\end{proof}

\begin{proof}[Proof of Corollary~\ref{d1}]
Let us consider the  functional $I: \mathcal{D}^{m,2}_{\infty}(\alpha) \to \mathbb{R}$ given by
$$I(u) = \frac{1}{2} \int_{0}^{\infty} |\nabla^{m}_{\alpha} u|^{2} r^{\alpha} \,dr - \dfrac{1}{2^{*}} \int_{0}^{\infty} |u|^{2^{*}} r^{\theta}\, dr.$$
From  Theorem \ref{c0-t2}, we can see that $I\in C^{1}(\mathcal{D}^{m,2}_{\infty}(\alpha),\mathbb{R})$ with
$$I'(u).v = \int_{0}^{\infty} \nabla^{m}_{\alpha} u \nabla^{m}_{\alpha} v r^{\alpha} \,dr -  \int_{0}^{\infty} |u|^{2^{*}-2} u v r^{\theta}\, dr, \quad \forall \,u,v \in \mathcal{D}^{m,2}_{\infty}(\alpha).$$
Hence, the critical points of $I$ are the weak solutions to the equation \eqref{c0-e8}. Now, let us consider the functionals $F, G\in C^{1}(\mathcal{D}^{m,2}_{\infty}(\alpha),\mathbb{R})$ given by 
$$G(u) = \int_{0}^{\infty} |\nabla^{m}_{\alpha} u|^{2} r^{\alpha}\, dr \quad \text{and} \quad F(u) = \int_{0}^{\infty}|u|^{2^{*}} r^{\theta} \,dr -1. $$
By Theorem \ref{a0-teo1}, the extremal function $z\in \mathcal{D}^{m,2}_{\infty}(\alpha)$ minimizers the functional $G$ under the constraint $F(u)=0.$ Thus, the Lagrange Multiplier Theorem yields 
\begin{equation}\label{a0-e20}
    2\int_{0}^{\infty} \nabla^{m}_{\alpha} z \nabla^{m}_{\alpha} v r^{\alpha}\, dr = \lambda 2^{*}\int_{0}^{\infty} |z|^{2^{*}-2} z v r^{\theta}\, dr, \quad \mbox{for all}\;\; v \in \mathcal{D}^{m,2}_{\infty}(\alpha),
\end{equation}
for some $\lambda\in\mathbb{R}$. By choosing $v=z$ in \eqref{a0-e20}, we have $2^{*}\lambda = 2 \mathcal{S}$. Finally,  using \eqref{a0-e20} we can see that $z_{\mathcal{S}} = \mathcal{S}^{\frac{1}{2^{*} - 2}} z$ satisfies $I'(z_{\mathcal{S}}).v =0$ for all $v\in \mathcal{D}^{m,2}_{\infty}(\alpha)$.    
\end{proof}

\end{document}